\renewcommand{\epsilon}{\varepsilon}
\renewcommand{\setminus}{\smallsetminus}
\renewcommand{\emptyset}{\varnothing}
\newtheorem{theorem}{Theorem}[section]
\newtheorem{proposition}[theorem]{Proposition}
\newtheorem{corollary}[theorem]{Corollary}
\newtheorem{lemma}[theorem]{Lemma}
\theoremstyle{definition}
\newtheorem{example}[theorem]{Example}
\newtheorem{definition}[theorem]{Definition}
\theoremstyle{remark}
\newtheorem{remark}[theorem]{Remark}
\newcommand{\Z}{\mathbb Z}
\newcommand{\R}{\mathbb R}
\newcommand{\BR}{\mathbb{R}}
\newcommand{\BZ}{\mathbb{Z}}
\newcommand{\FP}{\operatorname{FP}}
\newcommand{\UFP}{\underline{\operatorname{FP}}}
\newcommand{\cohom}[3]{H^{{\raise1pt\hbox{$\scriptstyle#1$}}}(#2\>\!,#3)}
\newcommand{\tatecohom}[3]%
  {\widehat H^{{\raise1pt\hbox{$\scriptstyle#1$}}}(#2\>\!,#3)}
\newcommand{\Cohom}[3]%
  {H^{{\raise1pt\hbox{$\scriptstyle#1$}}}\big(#2\>\!,#3\big)}
\newcommand{\Tatecohom}[3]%
  {\widehat H^{{\raise1pt\hbox{$\scriptstyle#1$}}}\big(#2\>\!,#3\big)}
\newcommand{\homol}[3]{H_{{\lower1pt\hbox{$\scriptstyle#1$}}}(#2\>\!,#3)}
\newcommand{\homolog}[2]{H_{{\lower1pt\hbox{$\scriptstyle#1$}}}(#2)}
\newcommand{\epi}{\twoheadrightarrow}
\newcommand{\OFG}{\mathcal O_{\mathcal F}G}
\newcommand{\OFH}{\mathcal O_{\mathcal F}H}
\newcommand{\OFM}{\mathcal O_{\mathcal F}M}
\title{Sigma theory for Bredon modules}
\author{D.~ H. ~Kochloukova}
\address{Dessislava H.~Kochloukova, Department of Mathematics, University of Campinas, Cx. P. 6065,
13083-970 Campinas, SP, Brazil}\email{desi@unicamp.br}
\author{C.~Mart\'inez-P\'erez}
\address{Conchita Mart\'inez-P\'erez, Departamento de Matem\'aticas, Universidad de Zaragoza,
50009 Zaragoza, Spain} \email{conmar@unizar.es}
\date{\today} 
\keywords{}
\subjclass[2000]{
20J05}
\thanks{}
\begin{document}

\thispagestyle{empty}

\begin{abstract} We develop new invariants  $\underline{\Sigma}^m(G, \underline{A})$ similar to the Bieri-Strebel-Neumann-Renz invariants $\Sigma^m(G, A)$ but in the category of Bredon modules $\underline{A}$ (with respect to the class of the finite subgroups of $G$). We prove that for virtually soluble groups of type $FP_{\infty}$ and finite extension of the Thompson group $F$ we have $
\underline{\Sigma}^{\infty}(G, \underline{\Z}) = \Sigma^{\infty}(G, \Z)$.
\end{abstract}

\maketitle

\section{Introduction}
Bredon cohomology with  respect to the family of finite subgroups can be intuitively understood as the cohomology theory obtained by considering proper (i.e., with finite stabilizers) actions instead of free actions of groups.
In this paper we introduce $\Sigma$-theory for the class of Bredon modules similar to the classical   Bieri-Strebel-Neumann-Renz  theory developed since 1980's. In the classical case modules $A$ over the group algebra $\Z G$ are considered and by definition a class $[\chi]$ of a non-trivial character $\chi : G \to \R$ is in $\Sigma^m(G, A)$ if $A$ is of type $FP_m$ over $\Z G_{\chi}$, where $G_{\chi}$ is the monoid  $\{ g \in G | \chi(g) \geq 0 \}$.  An early version of $\Sigma^1(G, \Z)$ was used as an important tool in the classification of all finitely presented metabelian groups by Bieri and Strebel \cite{BS2}. The importance of the invariant $\Sigma^m(G, \Z)$  lies in the fact that it classifies which subgroups of $G$ above the commutator are of type $FP_m$ \cite{BieriRenz}.
One of our main results, Theorem D below, is that the analogous statement holds for Bredon cohomology for the newly defined Bredon  $\Sigma$-invariants.

Finiteness cohomological conditions in Bredon cohomology play the same role when studying proper classifying spaces that ordinary finiteness cohomological properties for ordinary classifying spaces. Recall that for a group $G$ a $G$-$CW$ complex $X$ is a model for $\underline{\text{E}}G$, the proper classifying space, if $X^H$ is contractible whenever $H\leq G$ is finite and empty otherwise.

Then, if there is a model for $\underline{\text{E}}G$ with cocompact 2-skeleton and $G$ is of type Bredon $\FP_n$, also denoted $\underline{\FP}_n$ (see Section \ref{first} for a definition), one can show that there exists a model for $\underline{\text{E}}G$ with finite $n$-skeleton.
This follows using the $n$-dimensional version of \cite{lueck} Theorem 4.2, which in turn can be proven truncating at dimension $n$ the inductive procedure used there. In this paper we create a Bredon version of the homological $\Sigma$-invariants and hope that in future the question of homotopic Bredon $\Sigma$-invariants can be addressed.

We first develop general Bredon theory for modules over cancelation monoids
and later concentrate on monoids $G_{\chi}$, where $G$ is a group  and $\chi$ a non-zero real character of $G$. The main obstacle to develop a general $\Sigma$-theory is that the sets $[M/K,M/H]$ (see Section \ref{first} for notation), where $K$ and $H$ are finite subgroups of the cancelation monoid $M$, are not always finitely generated over the Weyl monoid $W_MK$ and to avoid this problem we consider special monoids $M$, namely cancelation monoids
 that conjugate finite subgroups. For these monoids we describe the Bredon type $\underline{FP}_n$ in the following result (see Corollary \ref{FPmproperty}).

\medskip
{\bf Theorem A.}
{\it A  cancelation monoid $M$
which conjugates finite subgroups is
of type $\UFP_n$ if and only if there are
finitely many finite subgroups $H_1,...,H_s$ such
that for each finite subgroup $K$ of $ M$ there is an
element $m \in M$ such that $Km \subseteq mH_i$
for some $i =1,...,s$ and $W_MK$ is of type
$\FP_n.$}

\medskip
In section \ref{sectionSigmaBredon} we define the new invariant
$\underline\Sigma^m(G, \underline{A})$ for an $\OFG$-module $\underline A$ and study it in detail for the trivial module $\underline{\Z}$. The following result (see  Theorem \ref{condition2}) classifies the elements of the new invariant in terms of the classical $\Sigma$-invariant.

\medskip

{\bf Theorem B.} {\it Suppose that $G$ is a finitely generated group and has finitely many conjugacy classes of finite subgroups.
Then
$[\chi] \in \underline{\Sigma}^m(G, \underline{\Z} )$ if and only if  there is a subgroup  $\widetilde G$ of finite index in $G$ that contains the commutator subgroup $G'$ and such that for every finite subgroup $K$ of $G$ we have $K\leq\widetilde G$ and

1. $N_{\widetilde{G}}(K) ( ker (\chi) \cap \widetilde{G}) = \widetilde{G}$;

2. $  \chi(N_G(K)) \not= 0 \hbox{ and }[\chi |_{N_G(K)}] \in \Sigma^m(N_G(K), \mathbb{Z}).$

Furthermore condition 2 can be substituted for condition

2b). $\chi(C_G(K)) \not= 0$ and $[\chi |_{C_G(K)}] \in \Sigma^m(C_G(K), \mathbb{Z})$.}

\medskip
By one of the main results in \cite{BieriRenz} the classical $\Sigma$ invariant is always an open subset of the character sphere $S(G)$. In the Bredon case the situation is slightly different and we have the following result (see Theorem \ref{open}).

\medskip

{\bf Theorem C.} {\it Suppose that $\underline{\Sigma}^m(G, \underline{\Z}) \not= \emptyset$. Then $\underline{\Sigma}^m(G, \underline{\Z})$ is open in $S(G)$ if and only if $N_G(K) G'$ has finite index in $G$ for every finite subgroup $K$.}

\medskip
In \cite[Thm.~B]{BieriRenz} it was shown that for a group $G$ of homological  type $FP_m$ and a subgroup $H$ of $G$ that contains the commutator of $G$ we have that $H$ is of type $FP_m$ if and only if $S(G, H) = \{ [\chi] |  \ \chi(H) = 0 \} \subseteq \Sigma^m(G, \mathbb{Z})$.
We establish the following Bredon version of \cite[Thm.~B]{BieriRenz} ( see Theorem \ref{BieriRenz}).

\medskip
{\bf Theorem D.}
{\it Let $H$ be a subgroup of $G$ that contains the commutator and $G/ H$ is torsion-free and non-trivial.
Then $\underline{\Z}$ is Bredon $FP_m$ as $\OFH$-module if and only if
 $
S(G, H) \subseteq  \underline{\Sigma}^m(G, \underline{\Z}).$}

\medskip

\medskip
Finally in the last two sections of the paper we consider the case of virtually soluble groups of type $FP_{\infty}$ or finite extension of the R. Thompson group $F$. In both cases the groups are known to be  of type Bredon $FP_{\infty}$ \cite{KMN1}, \cite{ConcBri}. The proofs of both cases of Theorem E (see Theorem \ref{Bredonsoluble} and Theorem \ref{BredonThompson})  use Theorem B and the  techniques developed to prove that $G$ is of type Bredon $FP_{\infty}$ in  \cite{KMN1}, \cite{ConcBri}.

\medskip

{\bf Theorem E.} {\it If $G$ is virtually soluble of type $FP_{\infty}$ or is a finite extension of the R. Thompson group $F$ then $
\underline{\Sigma}^{\infty}(G, \underline{\Z}) = \Sigma^{\infty}(G, \Z)$.}

\medskip

Acknowledgements : We thank Brita Nucinkis for the  fruitful talks on the new Bredon Sigma invariants which finally led to this article. The second author thanks the department of mathematics at UNICAMP, Brazil for the hospitality during a
visit of a week in September, 2011 when some parts of the current paper were developed and thanks FAPESP, Brazil for the travel grant for a visit to Brazil including the participation at a workshop on group theory in Ubatuba, September 2011. She was also supported by  Gobierno de Aragon, European Regional Development Funds and
MTM2010-19938-C03-03.  The first author  is partially supported by "bolsa de produtividade em pesquisa", CNPq, Brazil.

\section{Some Bredon cohomology for monoids}\label{first}

Let $M$ be a monoid.  We say  that $M$ is of type
$\FP_n$ if the trivial left module $\Z$ is of
type $\FP_n$ over the monoid ring $\Z M$. If not otherwise stated the modules considered in the paper are left ones.
Observe that for a monoid $M$ defining  type $FP_n$ using the right trivial $\Z M$-module $\Z$ might yield different result. Even for the monoid $M = G_{\chi} = \{ g \in G | \chi(g) \geq 0 \}$ that comes from a non-zero real character $\chi : G \to \R$, where $G$ is a finitely generated group,  we might have that the trivial left $\Z M$-module $\Z$ is $FP_m$ but   the trivial right $\Z M$-module $\Z$ is not $FP_m$.  For example if we consider the Bieri-Strebel-Neumann-Renz invariant $\Sigma^m(G, \Z)$ defined for left $\Z G$-modules $\Z$ it suffices that $[\chi] \in \Sigma^m(G, \Z)$ but $[- \chi] \not\in \Sigma^m(G, \Z)$.
 This is a consequence of the fact that if $\Z$ were $\FP_m$ as a right $M$-module, then via $g\mapsto g^{-1}$ one could show that $\Z$ would be $\FP_m$ as a left  $G_{-\chi}$-module, contradicting that  $[- \chi] \not\in \Sigma^m(G, \Z)$ .
 As shown in \cite{GuPr} even the notion of finite cohomological dimension for a monoid depends on the choice of left or right modules.

\begin{definition}\label{cancelation} We say that a monoid $M$ is a left (resp. right) cancelation monoid if for any $m, m_1, m_2 \in M$ such that $m m_1 = m m_2$ (resp. such that $m_1 m=m_2 m$) we have $m_1 = m_2$. And we say that $M$ is a cancelation monoid if it is both left and right cancelation monoid. \end{definition}

From now on until the end of this section we assume that $M$ is a  {\bf cancelation}  monoid unless otherwise stated.
We say that a (left) $M$-set $\Delta$ is
transitive if it is generated by a single $\omega\in\Delta$, i.e., if
 $\Delta=M\omega$. We say that an $M$-set $\Delta$ is restricted if
it is a disjoint union of transitive $M$-sets,
i.e.,
$$\Delta=\dot{\bigcup}_{\lambda\in\Lambda}\Delta_\lambda$$
where $\Lambda$ is a set and each
$\Delta_\lambda$ is transitive.

 If $\Lambda$ is
finite, we say that $\Delta$ is $M$-finite. And  if
 the stabilizer in $M$ of any generator of each $\Delta_\lambda$ is a
{\bf finite } submonoid, then we say that $\Delta$ is proper. Note that since $M$ is a cancelation monoid any finite submonoid is a subgroup.
For example, for the monoid $M=\{x^n\mid n\geq
0\}$ the $M$-set $\{x^n\mid n\in\Z\}$ with left $M$-action given by multiplication is not
restricted. Neither is the finitely generated $M$-set $X:=M\dot\cup M/\sim$, where $\sim$ consists of identifying the two copies of $x^i$ whenever $i\geq i_0$ for a fixed $i_0>0$.

We define the orbit category $\OFM$ to be the
category with objects the transitive proper $M$-sets.
We denote the objects of $\OFM$ by
$M/K$, where $K$ is a finite subgroup of $M$. Here $M / K = \{ m K \mid m \in M \}$.
Morphisms in $\OFM$ are $M$-maps $\phi: M/K \to
M/H$ and are uniquely determined by $\phi(K) =mH$.
For this to be well defined we need $Km
\subseteq mH.$ The set of morphisms
$mor(M/K,M/H)$ is denoted
  $$[M/K,M/H]=\{mH\mid Km\subseteq mH\}.$$
In the particular case when $K=H$ we set
$$W_MK:=[M/K,M/K].$$
 Note that since $M$ is a cancelation monoid the sets $m K $ and $Km$ have the same cardinality as $K$, hence $W_MK = \{ m K  \mid m K = K m \}$ and
$W_MK$ is a cancelation monoid which we call the
Weyl monoid for $K$ in analogy with Weyl groups.

As in the group case, we may define a
Bredon module, or $\OFM$-module $V(-)$, as a
contravariant functor from $\OFM$ to the category
of abelian groups. The Bredon modules form an abelian
category so we have (co)products and (co)limits,
exact sequences etc are defined analogously.

By definition a free Bredon module is one of the form
$\Z[-,\Delta],$ where $\Delta$ is a restricted proper
$M$-set.  We say that a morphism in the category of Bredon modules $V(-) \to W(-)$ is an epimorphism if  for every $M / K \in \OFM$ we have that the map $V(M/ K) \to W(M/K)$ is surjective. It is easy to see that every Bredon module is an epimorphic image of a free Bredon module. Following the same procedure as in the group
case, see \cite{mislinsurvey}, one can show that
the category of $\OFM$-modules has enough
projectives and then define cohomology and
homology.

Observe also that for any $\OFM$-module $V$ and
any  finite subgroup $K\leq M$,
the functor $V(-)$ yields a structure of $W_MK$-module in $V(M/K)$.
In the particular case when $V(-)=\Z[-,M/H]$ this action is given by $xKmH=xmH$ for $xK\in W_MK$, $m\in M$ (for example, if $M$ has no finite  subgroups, then a $\OFM$-module is just an $M$-module).
 But note that
the condition that $\Delta$ is restricted in the definition of a free Bredon module is
necessary, for example consider again the monoid
$M=\{x^n\mid n\geq 0\}$ and $\Delta=\{x^n\mid
n\in\Z\}$. Then the $\OFM$-module
$\Z[-,\Delta]$ is just the $M$-module $\Z\Delta$ which can not be projective (an easy way to see it is to observe that for any $a\in\Z\Delta$, there is some $b\in\Z\Delta$ with $xb=a$, something that can not happen in any submodule of a free $M$-module).

By definition an $\OFM$-set $\Sigma$ is a collection
of sets $\Sigma_K$, one for each finite subgroup $K\leq
M$. We say that $\Sigma$ is $\OFM$-finite if
$\Sigma_K$ is finite for each $K$  and empty for
all but finitely many subgroups $K$. As for
groups (\cite[Section 9]{lueckbook}), an
$\OFM$-module $U$ is finitely generated if there is an $\OFM$-finite $\OFM$-set $\Sigma$
such that for each finite subgroup $K$, $\Sigma_K\subseteq U(M/K)$ and there is no proper submodule $V(-)$ of $U(-)$ with $\Sigma_K\subseteq V(M/K)$ for any finite subgroup  $K$. If we put
$\Delta_K=M/K\times\Sigma_K$ seen as $M$-set with trivial action on the right hand factor and $\Delta:=\dot\bigcup\{\Delta_K\mid K\leq M\text{ finite subgroup}\},$
 then
 there is a surjection  $\rho : \Z[-,\Delta]
\epi U(-)$. And conversely, if there is such a surjection $\rho$ for some restricted $M$-finite proper $M$-set $\Delta$, then $U(-)$ is finitely generated.

The finiteness conditions $\UFP_n$,
$\UFP_\infty$ and $\UFP$ for $\OFM$-modules are
defined as usual and
we say a cancelation monoid $M$ satisfies any of the above
finiteness conditions if the trivial
$\OFM$-module $\underline\Z$ does, where $\underline\Z$ is the $\OFM$-module with $\underline\Z(M/K)=\Z$ for any $K$ finite and with all the maps equal to the identity.

In particular if $M=G$ is a group this defines the finiteness conditions  $\UFP_n$,
$\UFP_\infty$ and $\UFP$ for $\OFG$-modules, also called Bredon finiteness conditions. Our main objective in the rest of this section is to generalize to monoids (under reasonable extra hypotheses) the following  well known characterization

\begin{lemma}(\cite{lueck}, \cite[Lemma 3.2]{kmn})\label{ordinary} A group $G$ is of type $\UFP_n$ if and only if it has finitely many conjugacy classes of finite subgroups and moreover for each finite subgroup $K$, the Weyl group $W_GK=N_G(K)/K$ is of type $\FP_n$.
\end{lemma}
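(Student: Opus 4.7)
The plan is to exploit the exact evaluation functor $\operatorname{ev}_K : V(-)\mapsto V(G/K)$, which sends $\OFG$-modules to $W_GK$-modules. A direct computation shows that $\operatorname{ev}_K(\Z[-,G/H]) = \Z[G/K,G/H]$ is the $W_GK$-permutation module on the set $\{gH : g^{-1}Kg \leq H\}$; this has finite stabilizers $(N_G(K)\cap gHg^{-1})/K$ (since $H$ is finite) and finitely many $W_GK$-orbits (in bijection with the $H$-conjugacy classes of subgroups of $H$ that are $G$-conjugate to $K$). Consequently $\operatorname{ev}_K$ carries every finitely generated free $\OFG$-module to a $W_GK$-module of type $\FP_\infty$ over $\Z[W_GK]$.

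For the forward direction, I would fix a finitely generated free resolution $F_n\to\cdots\to F_0\to\underline{\Z}\to 0$ of $\OFG$-modules. Because all structure maps of $\underline{\Z}$ are identities, surjectivity of $F_0(G/H)\epi\Z$ forces every finite subgroup $H$ to be subconjugate to one of the finitely many finite subgroups $K_1,\dots,K_s$ appearing in the generators of $F_0$; each finite subgroup is therefore conjugate to a subgroup of some $K_i$, so there are only finitely many conjugacy classes of finite subgroups of $G$. Applying the exact functor $\operatorname{ev}_K$ for a given finite $K$ yields an exact sequence of $W_GK$-modules ending in $\Z$ whose terms are of type $\FP_\infty$ by the previous paragraph; the standard principle that an exact sequence of length $n$ by $\FP_\infty$ modules produces a module of type $\FP_n$ then gives that $W_GK$ is of type $\FP_n$.

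For the converse, assuming both conditions, the construction of L\"uck (\cite{lueck}, Theorem 4.2) truncated at dimension $n$ produces a cocompact $n$-skeleton of $\eg$ by an inductive cell-attachment procedure indexed by the subconjugacy partial order on the finitely many conjugacy classes of finite subgroups; at each class $[K_i]$, the $\FP_n$ hypothesis on $W_GK_i$ ensures that only finitely many equivariant cells are needed in each dimension up to $n$ to make the corresponding fixed-point subcomplex sufficiently connected. The cellular chain complex of the resulting $G$-$CW$ complex is then a finitely generated free $\OFG$-resolution of $\underline{\Z}$ of length $n$, yielding $\UFP_n$.

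The main obstacle lies precisely in this converse direction: one must coordinate cell attachments across all conjugacy classes simultaneously so that every fixed-point subcomplex becomes sufficiently connected while maintaining finite generation in each degree. Algebraically this amounts to splicing the $\FP_n$ resolutions over the distinct Weyl groups $W_GK_i$ into a single Bredon resolution via a carefully chosen refinement of the subconjugacy filtration of $\OFG$; the left adjoint of $\operatorname{ev}_{K_i}$ transports finitely generated projective $W_GK_i$-modules back to finitely generated projective $\OFG$-modules, but tracking exactness at every $G/K_j$ in parallel is the combinatorial core of L\"uck's argument.
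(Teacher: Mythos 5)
Your forward direction is essentially correct and matches the standard argument: evaluation at $G/K$ is exact and sends finitely generated free Bredon modules to $\FP_\infty$ $W_GK$-modules (because the $W_GK$-set $[G/K,G/H]$ has finitely many orbits with finite stabilizers), so applying it to a finitely generated free partial resolution and dimension-shifting gives $\FP_n$ for $W_GK$; finiteness of conjugacy classes falls out of the degree-$0$ surjection onto $\underline\Z$. This is precisely the argument the paper runs in the ``only if'' half of Proposition~\ref{conditionFPn} (the monoid generalization of this lemma; the lemma itself is only cited, not reproved).

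The converse, however, has a genuine gap. You appeal to L\"uck's Theorem~4.2, which is the homotopy-finiteness statement about cocompact models for $\eg$; as the paper notes in its introduction, truncating that construction at dimension $n$ requires a model for $\eg$ with cocompact $2$-skeleton as an extra hypothesis, and in any case it does not deliver the purely algebraic $\UFP_n$ conclusion from the data at hand. The ``combinatorial core'' you describe --- coordinating cell attachments, or splicing Weyl-group resolutions along a refinement of the subconjugacy filtration --- is a misdiagnosis: none of that is needed. The algebraic converse, as in \cite[Lemma 3.2]{kmn} and in the ``if'' half of Proposition~\ref{conditionFPn}, is much simpler. First, an $\OFG$-module $V$ is finitely generated if and only if $V(G/K)$ is a finitely generated $W_GK$-module for each $K$ in a finite set of conjugacy-class representatives; no filtration or bookkeeping across classes is required, since conjugate finite subgroups give isomorphic objects of $\OFG$. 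Now take a length-$(n-1)$ finitely generated free partial Bredon resolution of $\underline\Z$ and let $U$ be its $n$-th kernel. Evaluating at each representative $K$, the free terms are $\FP_\infty$ over $W_GK$ (your first paragraph) and $\Z$ is $\FP_n$ over $W_GK$ by hypothesis, so $U(G/K)$ is finitely generated; by the criterion above $U$ is a finitely generated $\OFG$-module, and induction finishes. The exactness you worry about ``at every $G/K_j$ in parallel'' is automatic because evaluation is exact; there is nothing to coordinate.
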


We begin with the case $n=0.$

\begin{lemma}\label{fp0monoids}
A cancelation monoid $M$ is of type $\UFP_0$ if and only if
there are finitely many finite subgroups
$H_1,...,H_n$ of $M$ such that for each finite subgroup
$K$ of $M$ there is an element $m \in M$ such that $Km
\subseteq mH_i$ for some $i =1,...,n.$
\end{lemma}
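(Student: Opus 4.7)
The plan is to unpack what it means for $\underline{\Z}$ to be finitely generated as an $\OFM$-module and to translate this directly into the stated combinatorial condition on morphism sets in $\OFM$.

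First, I would invoke the discussion preceding the lemma: $M$ is of type $\UFP_0$ precisely when $\underline{\Z}$ is finitely generated as an $\OFM$-module, equivalently when there exists an epimorphism
\[
\rho : \Z[-,\Delta] \epi \underline{\Z}
\]
for some restricted $M$-finite proper $M$-set $\Delta$. Any such $\Delta$ decomposes as a finite disjoint union $\Delta = \dot\bigcup_{i=1}^n M/H_i$ with $H_1,\ldots,H_n$ finite subgroups of $M$, so that $\Z[-,\Delta] = \bigoplus_{i=1}^n \Z[-,M/H_i]$ and $\rho$ amounts to a tuple of natural transformations $\rho^i : \Z[-,M/H_i] \to \underline{\Z}$.

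Second, I would apply the Yoneda lemma to each $\rho^i$: it is determined by $c_i := \rho^i_{M/H_i}(\mathrm{id}_{M/H_i}) \in \underline{\Z}(M/H_i) = \Z$. Since every structure map of $\underline{\Z}$ is the identity on $\Z$, naturality forces, for any $\phi \in [M/K, M/H_i]$,
\[
\rho^i_{M/K}(\phi) = \underline{\Z}(\phi)(c_i) = c_i.
\]
Hence $\im(\rho_{M/K})$ is the subgroup of $\Z$ generated by $\{c_i : [M/K, M/H_i] \neq \emptyset\}$.

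Given this description, both implications are immediate. For the \emph{if} direction, choose $H_1,\ldots,H_n$ as in the hypothesis and set $c_i := 1$ for each $i$; for every finite subgroup $K$ the hypothesis yields some $i$ with $[M/K,M/H_i] \neq \emptyset$, so $\im(\rho_{M/K}) = \Z$ and $\rho$ is an epimorphism. For the \emph{only if} direction, surjectivity of $\rho_{M/K}$ forces $\{c_i : [M/K,M/H_i] \neq \emptyset\}$ to be non-empty, whence some $i$ satisfies $[M/K,M/H_i] \neq \emptyset$, i.e.\ some $m \in M$ satisfies $Km \subseteq mH_i$; the $H_i$'s coming from $\Delta$ are then the required finite list.

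The proof is really just a routine unfolding of the definitions. The only conceptual point is that because every structure map of $\underline{\Z}$ is the identity, the Yoneda calculation collapses so that $\rho_{M/K}$ must send every basis element of $\Z[M/K,M/H_i]$ to the same scalar $c_i$; consequently surjectivity at $M/K$ depends only on which morphism sets $[M/K,M/H_i]$ are non-empty, not on their cardinality. There is no real obstacle.
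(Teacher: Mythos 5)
Your proof is correct and follows essentially the same approach as the paper's: both unpack the definition of $\UFP_0$ via an epimorphism $\Z[-,\Delta] \epi \underline{\Z}$ with $\Delta$ a restricted $M$-finite proper $M$-set, and then translate surjectivity at each $M/K$ into non-emptiness of some $[M/K,M/H_i]$. The paper simply notes that surjectivity forces $\Z[M/K,\Delta]\neq 0$, whereas you make the Yoneda calculation explicit; this is a more detailed rendition of the same argument, not a different route.
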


\proof  Note that $M$ is of type $\UFP_0$ if and
only if there is a restricted $M$-finite proper $M$-set
$\Delta=\bigcup_{i=1}^nM/H_i\times\Delta_{H_i}$ (i.e. every $\Delta_{H_i}$ is finite) such that
$\Z[-,\Delta]$ surjects onto $\underline{\Z}(-)$. This means
that for any  finite subgroup $K$, $\Z[M/K,\Delta]\neq 0$
thus there is an $m \in M$ such that $Km
\subseteq mH_i$ for some $i$.     \qed

\begin{definition} \label{conj} Let $M$ be a monoid. We say that $M$ conjugates finite subgroups if
for any $H_1,H_2\leq M$ finite such that
$H_1g=gH_2$ for some $g\in M$, there exists a
$h\in M$ invertible with $H_1^h=H_2$, where $H_1^h = h^{-1} H_1 h$. We say that
$M$ has finitely many conjugacy classes of finite
subgroups if there is a finite family of finite
subgroups $\{K_1,\ldots,K_s\}$ such that for any
finite subgroup  $L\leq M$  there is some $i$ and some
invertible $t\in M$ such that $L=t^{-1}K_it$.
\end{definition}

\begin{proposition}\label{conditionFPn}
Let $M$
be a cancelation monoid of type $\UFP_0$ that satisfies the following two conditions:
   \begin{itemize}
\item[i)]  For any  finite subgroups $K,L$ of  $ M$ the $W_MK$-module
$\Z[M/K,M/L]$ is of type $FP_{\infty}$.

\item[ii)] $M$ has finitely many conjugacy classes of finite subgroups.
\end{itemize}
Then a $\OFM$-module $V$ is of
type $\UFP_n$ if and only if for each finite  subgroup
$K$ of $M$,  $V(M/K)$ is a module of type $\FP_n$
over $W_MK.$
\end{proposition}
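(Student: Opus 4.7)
The plan is to prove both implications by induction on $n$, with the case $n=0$ (finite generation) as the delicate base.

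For the forward direction, assume $V$ is of type $\UFP_n$ and fix a finite subgroup $K\leq M$. Take a partial Bredon projective resolution $P_n\to\cdots\to P_0\to V\to 0$ with each $P_i$ finitely generated projective. Each $P_i$ is a direct summand of a finite direct sum $\bigoplus_{j}\Z[-,M/L_{i,j}]$, so $P_i(M/K)$ is a finitely generated $W_MK$-summand of $\bigoplus_{j}\Z[M/K,M/L_{i,j}]$. Hypothesis (i) says that each $\Z[M/K,M/L_{i,j}]$ is of type $FP_\infty$ over $W_MK$, and since $FP_\infty$ is preserved under finite direct sums and under finitely generated direct summands, $P_i(M/K)$ is of type $FP_\infty$ over $W_MK$. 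The induced partial resolution $P_n(M/K)\to\cdots\to P_0(M/K)\to V(M/K)\to 0$ is exact, so by the standard inheritance of $FP_n$ from a length-$n$ partial resolution whose terms are $FP_\infty$, $V(M/K)$ is of type $FP_n$ over $W_MK$.

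For the reverse direction I would induct on $n$. For the base $n=0$, use (ii) to fix representatives $K_1,\ldots,K_s$ of the conjugacy classes of finite subgroups and, for each $i$, a finite $W_MK_i$-generating set $v_{i,1},\ldots,v_{i,r_i}$ of $V(M/K_i)$. Define $F:=\bigoplus_{i=1}^{s}\bigoplus_{j=1}^{r_i}\Z[-,M/K_i]$, a finitely generated free $\OFM$-module, and the morphism $F\to V$ sending the canonical generator of the $(i,j)$-th summand to $v_{i,j}$. To verify surjectivity at an arbitrary finite subgroup $L$, use (ii) to write $L=t^{-1}K_it$ for some invertible $t\in M$; the identities $Lt^{-1}=t^{-1}K_i$ and $K_it=tL$ give two mutually inverse morphisms $M/L\leftrightarrow M/K_i$ in $\OFM$, hence a functorial isomorphism $V(M/L)\iso V(M/K_i)$ compatible with the conjugation isomorphism of Weyl monoids. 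Any element of $V(M/L)$ then transports to a $W_MK_i$-combination of the $v_{i,j}$ and hence lies in the image of $F(M/L)$. For the inductive step, given such a surjection $F\epi V$, set $V':=\ker(F\to V)$; evaluating at $M/K$ yields $0\to V'(M/K)\to F(M/K)\to V(M/K)\to 0$, where $F(M/K)$ is $FP_\infty$ over $W_MK$ by (i) and $V(M/K)$ is $FP_n$ by assumption. The usual short-exact-sequence inheritance then forces $V'(M/K)$ to be $FP_{n-1}$ over $W_MK$; by induction $V'$ is $\UFP_{n-1}$, and splicing a finitely generated free Bredon resolution of $V'$ with $F\epi V$ yields a length-$n$ finitely generated free Bredon partial resolution of $V$.

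The main obstacle is the $n=0$ surjectivity. The invertibility demanded in Definition \ref{conj} is exactly what produces the two-sided $\OFM$-isomorphism $M/L\iso M/K_i$ needed to transport $W_MK_i$-generation of $V(M/K_i)$ to every $V(M/L)$; a weaker conjugacy condition supplying only a morphism $M/L\to M/K_i$ (as in Lemma \ref{fp0monoids}, which suffices when $V=\underline\Z$ because the value groups are identified by the identity) would not be enough to conclude that arbitrary elements of $V(M/L)$ lie in the image of $F$. Everything else is routine provided one has the standard $FP$-inheritance lemmas available in the category of $W_MK$-modules.
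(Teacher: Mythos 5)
Your proof is correct and follows the same strategy as the paper's: in the forward direction, evaluate a finitely generated (projective or free) partial resolution at $M/K$ and apply hypothesis (i) together with dimension shifting; in the reverse direction, use the finitely many conjugacy-class representatives to build a finitely generated free $\OFM$-module surjecting onto $V$, then induct on $n$ by passing to the kernel. Your treatment is somewhat more explicit than the paper's — you spell out why the invertibility in Definition \ref{conj} gives a genuine $\OFM$-isomorphism $M/L\iso M/K_i$ (rather than just a morphism), which is exactly what makes the surjectivity argument at arbitrary $L$ work and is left implicit in the paper — but the ideas and their organisation coincide.
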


\proof  If $V$ is of type $\UFP_n$, then there is
a projective resolution $P_\ast (-) \epi V(-)$
such that $P_i(-)$ is finitely generated for all
$i \leq n.$ We may assume that for $i\leq n$ the
$P_i(-)$ are finitely generated free, hence of
the form $\Z[-,\Delta_i]$ with $\Delta_i$ a
restricted proper $M$-finite $M$-set. Thus
$\Z[-,\Delta_i]=\oplus_{j=1}^k\Z[-,M/L_j]$ for some
finite subgroups $L_j$ (here we allow repetitions). Note that for any
finite subgroup $K$, $P_\ast (M/K) \epi V(M/K)$
is an exact sequence of modules. Since each $P_i(M/K)$ is of type
$\FP_\infty$ as $W_MK$-module for $i \leq n$ this
yields  the result by dimension shifting
(\cite[Proposition 1.4]{Bieribook}).

\noindent Assume now that $V$ is an $\OFM$-module
such that each $V(M/K)$ is finitely generated as
$W_MK$-module. Choose a set $\Omega$ of
representatives of each conjugacy class of finite
subgroups, so $\Omega$ is finite. For each $K\in\Omega$,
let $\Sigma_K$ be a finite generating system of
$V(M/K)$ as $W_MK$-module. Then the $\OFM$-set
$\Sigma$ which corresponds to $\Sigma_K$ whenever
$K\in\Omega$ and is empty otherwise is
$\OFM$-finite and generates $V$ as $\OFM$-module.
As a consequence, there is a finitely generated
free $\OFM$-module $P$ and an epimorphism
$P\twoheadrightarrow V$.
This proves the case $n=0$ of the \lq\lq if"
part. For the general case, argue by induction,
exactly as in
  \cite[Lemma 3.1]{kmn}. Explicitly, assume that
the result holds  for $n-1$ and consider the
$n$-th kernel $U_n$ of a projective resolution of
$V$ as $\OFM$-module formed, up to degree $n-1$,
by finitely generated free Bredon modules. Then the fact
that evaluating a finitely generated free module
at each $M/K$ yields a $W_MK$-module of type
$\FP_\infty$ and the hypothesis that $V(M/K)$ is
$\FP_n$ imply that each $U_n(M/K)$ is finitely
generated and the preceding paragraph yields that $U_n$ is finitely generated, hence
$V$ is also $\UFP_n$.
   \qed

\begin{lemma}\label{restfpinfty} Let $M$ be a  cancelation monoid  and $\Delta$ be a restricted $M$-finite proper $M$-set. Then the permutation  $M$-module $\Z\Delta$ is of type $\FP_\infty$.
\end{lemma}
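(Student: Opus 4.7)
The plan is to reduce to the case of a single transitive orbit and then to induce up a finite-type resolution from the finite stabiliser. Since $\Delta$ is restricted, $M$-finite, and proper, write $\Delta=\dot\bigcup_{i=1}^n M/H_i$ as a disjoint union of finitely many transitive proper $M$-sets, each $H_i$ a finite subgroup of $M$ (any finite submonoid of a cancellation monoid being a subgroup, as noted earlier in the section). Then $\Z\Delta=\bigoplus_{i=1}^n \Z[M/H_i]$ as left $\Z M$-modules, and a finite direct sum of $\FP_\infty$ modules is again $\FP_\infty$, so it suffices to handle a single permutation module $\Z[M/H]$ for $H\leq M$ finite.

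Next I would show that $\Z M$ is a free right $\Z H$-module. Cancellation in $M$ implies both that the map $H\to mH$, $h\mapsto mh$, is a bijection for every $m\in M$, and that two right cosets $mH$ and $m'H$ are either equal or disjoint (if $mh_1=m'h_2$ then $m'=mh_1h_2^{-1}\in mH$). Picking a transversal then gives $\Z M=\bigoplus_{mH\in M/H} m\cdot\Z H$, a free right $\Z H$-module, hence in particular flat. Moreover one has a natural isomorphism of left $\Z M$-modules $\Z[M/H]\cong \Z M\otimes_{\Z H}\Z$, since both sides have the set $M/H$ as a $\Z$-basis with the same $M$-action.

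Finally, because $H$ is a finite group, $\Z H$ is Noetherian and the trivial module $\Z$ admits a resolution $\cdots\to F_1\to F_0\to\Z\to 0$ by finitely generated free $\Z H$-modules. Applying the exact functor $\Z M\otimes_{\Z H}(-)$ turns this into a resolution of $\Z[M/H]$ whose terms are $\Z M\otimes_{\Z H}(\Z H)^k\cong (\Z M)^k$, i.e.\ finitely generated free $\Z M$-modules, yielding the required $\FP_\infty$ resolution. The only step that uses cancellation essentially is the decomposition of $M$ into right $H$-cosets of full size $|H|$; without it one cannot make $\Z M$ flat over $\Z H$ and the induced-resolution argument collapses. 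Once this is in place, the remainder is the standard change-of-rings manoeuvre familiar from the group setting, so I do not anticipate any real obstacle.
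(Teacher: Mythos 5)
Your proof is correct and follows essentially the same route as the paper: reduce to a single transitive orbit $M/H$, then observe that $\Z[M/H]$ is induced from the trivial $\Z H$-module and that induction from $\Z H$ to $\Z M$ is exact and preserves finitely generated projectives. You simply make explicit what the paper leaves compressed — namely that cancellation gives the right-coset decomposition making $\Z M$ free (hence flat) over $\Z H$, and the identification $\Z[M/H]\cong\Z M\otimes_{\Z H}\Z$.
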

\begin{proof} Note that it suffices to consider the case when $\Delta$ is $M$-transitive i.e. we may assume that $\Delta=M/K$ for some finite  subgroup $K$ of $ M$.
Then the assertion is obvious as $\Z$
is of type $\FP_\infty$ as $\Z K$-module and the
induction functor from $K$-modules to $M$-modules
is exact and takes finitely generated projectives
to finitely generated projectives.
\end{proof}

\begin{lemma}\label{decomposition} Let $H,K\leq M$ be finite subgroups of the cancelation monoid $M$. There are subgroups $K_1,\ldots, K_s\leq H$ such that there is a  decomposition as $W_MK$-set
\begin{equation} \label{disjoint} [M/K,M/H]=\cup_{i=1}^s\Omega_i\text{ where }\Omega_i:=\{mH\mid Km=mK_i\}.\end{equation}
Furthermore any two sets $\Omega_i$ and $\Omega_j$ are either disjoint or equal. In particular if we take a decomposition where $s$  is minimal, the union in (\ref{disjoint}) is disjoint.
\end{lemma}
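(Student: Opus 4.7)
The plan is to associate to each morphism $mH \in [M/K, M/H]$ a subgroup $K_m \leq H$, well defined up to $H$-conjugacy, and then to choose the $K_i$ as representatives of the finitely many conjugacy classes that arise.

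First I would exploit cancelation in $M$. Given a representative $m$ with $Km \subseteq mH$, for each $k \in K$ there is a unique $h \in H$ with $km = mh$; uniqueness uses right cancelation. Writing $\phi_m(k) := h$, a short calculation using both left and right cancelation shows that $\phi_m : K \to H$ is an injective group homomorphism, so $K_m := \phi_m(K)$ is a subgroup of $H$ and $Km = m K_m$. A direct computation also gives $K_{m h_0} = h_0^{-1} K_m h_0$ for $h_0 \in H$, so the $H$-conjugacy class of $K_m$ is an invariant of the coset $mH$. Since $H$ is finite, only finitely many conjugacy classes can occur; choose subgroups $K_1, \ldots, K_s \leq H$, one per class, and note that for every $mH \in [M/K, M/H]$ there exists $h_0 \in H$ with $K_{m h_0} = K_i$ for some $i$, so $mH = (m h_0) H \in \Omega_i$. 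This proves $[M/K, M/H] = \bigcup_{i=1}^{s} \Omega_i$.

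For the $W_MK$-stability of each $\Omega_i$, I would use that any $xK \in W_MK$ satisfies $Kx = xK$ as subsets of $M$ (by cancelation, both sides have cardinality $|K|$ and the inclusion $Kx\subseteq xK$ comes from the definition of $W_MK$) and that the $W_MK$-action on $[M/K, M/H]$ is given by $(xK) \cdot (mH) = (xm) H$. Then if $Km = m K_i$ we get $K(xm) = (Kx) m = x(Km) = (xm) K_i$, so $(xm) H \in \Omega_i$. For the property that any two $\Omega_i, \Omega_j$ are either disjoint or equal: if $mH \in \Omega_i \cap \Omega_j$, then by comparing two representatives $m_1,m_2$ of $mH$ that witness $Km_1 = m_1 K_i$ and $Km_2 = m_2 K_j$ one finds $h \in H$ with $K_j = h^{-1} K_i h$; for any $m' H \in \Omega_i$ with $Km' = m' K_i$ the element $m' h$ then satisfies $K(m' h) = (m' h) K_j$, showing $\Omega_i \subseteq \Omega_j$, and symmetrically. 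Taking $s$ minimal amounts to choosing exactly one $K_i$ per $H$-conjugacy class of subgroups that actually appear, in which case the union becomes disjoint.

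The main point requiring care is the construction of $\phi_m$: checking that $k \mapsto h$ is a well-defined injective homomorphism is precisely where both the left and right cancelation hypotheses on $M$ are used in an essential way, and this step is what makes $K_m$ a genuine subgroup of $H$ isomorphic to $K$. Everything afterwards is bookkeeping inside the finite group $H$, and the finiteness of $H$ guarantees the list $K_1, \ldots, K_s$ is finite.
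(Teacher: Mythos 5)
Your proof is correct and follows essentially the same route as the paper's: you attach to each morphism $mH$ the subgroup $K_m=\phi_m(K)\leq H$ with $Km=mK_m$ (the paper calls it $\overline K$ and checks directly that it is a subgroup instead of building the monomorphism $\phi_m$), appeal to finiteness of $H$ for the decomposition, and derive the disjoint-or-equal dichotomy from $H$-conjugacy of the $K_i$. The only cosmetic difference is that you record the conjugation identity $K_{mh_0}=h_0^{-1}K_m h_0$ and select one $K_i$ per conjugacy class up front, whereas the paper first lists all occurring subgroups and deduces the conjugacy relation afterwards; also note that with the paper's conventions the uniqueness of $h$ in $km=mh$ is \emph{left} cancelation and the injectivity of $\phi_m$ uses \emph{right} cancelation, the reverse of your labeling, though this does not affect the argument.
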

\begin{proof} Fix $mH\in[M/K,M/H]$ and let
$$\overline K:=\{h\in H\mid mh\in Km\}.$$
Given $h_1,h_2\in\overline K$, there are some
$k_1,k_2\in K$ such that $mh_1=k_1m$ and
$mh_2=k_2m$. Then $mh_1h_2=k_1mh_2=k_1k_2m$ thus
$\overline K$ is a subgroup of $H$. Obviously,
$m\overline K\subseteq Km$ and conversely, as
$Km\subseteq mH$ one gets $Km \subseteq m\overline K$, hence $Km=m\overline K$.
 As for $xK\in W_MK$,
$x m\bar K=xKm=Kx m$,
 the monoid $W_MK$
acts on $\{mH\mid
Km=m\overline K\}$. Since $H$ is finite, there are finitely many subgroups $\overline K$ that can be obtained in this form so the first assertion follows.

Suppose $\{mH\mid Km=mK_i\} \cap \{mH\mid Km=mK_j\} \not= \emptyset$ for some $i \not= j$, so there are
$a_1, a_2 \in M$ such that $a_1H = a_2 H$, $K a_1 = a_1 K_i$ and $K a_2 = a_2 K_j$. Then $a_1 = a_2 h$ for some $h \in H$ and hence $K a_2 h = K a_1 = a_1 K_i = a_2 h K_i$, so
$$
a_2 K_j = K a_2 = a_2 K_i^{h^{-1}}.
$$
Since  $M$ is a cancelation monoid $K_j = K_i^{h^{-1}}$, so
$\{mH\mid  Km=mK_j\} = \{ mH  \mid Kmh = mh K_i \} = \{m_0 H \mid  Km_0=m_0K_i\}$.
 \end{proof}

\begin{definition} We say that a monoid $M$ has the left linear property if for every $m_1, m_2 \in M$ at least one of the linear equations $x m_1 = m_2$ and $x m_2  = m_1$ has a solution in $M$.
\end{definition}

\begin{lemma} \label{transitivity1} Assume that $M$ is a cancelation monoid  with the left linear property. Let $\Omega_i$ be one of the disjoint sets from Lemma  \ref{decomposition}  and assume that $\Omega_i$ is finitely generated over $W_MK$. Then $\Omega_i$ is $W_MK$-transitive.
\end{lemma}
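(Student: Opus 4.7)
The plan is to reduce transitivity to a pairwise comparability statement using the left linear property, and then verify the comparability via the common ``type'' $K_i$ shared by all elements of $\Omega_i$. Since $\Omega_i$ is finitely generated over $W_MK$ by hypothesis, I fix a finite generating family $m_1H,\ldots,m_rH\in\Omega_i$. It suffices to show that all the $m_jH$ sit in a single $W_MK$-orbit, for then this orbit is $W_MK$-invariant, contains all the generators, and hence equals $\Omega_i$.

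To compare two generators $m_aH$ and $m_bH$, I apply the left linear property to the elements $m_a,m_b\in M$: after possibly swapping roles there is $x\in M$ with $xm_a=m_b$. The heart of the argument is to show $xK\in W_MK$, i.e.\ that $Kx=xK$ as subsets of $M$. This is where the assumption $m_aH,m_bH\in\Omega_i$ enters crucially: both satisfy $Km_a=m_aK_i$ and $Km_b=m_bK_i$ with the \emph{same} $K_i\leq H$. Computing both ways gives
\[
(Kx)m_a \;=\; Kxm_a \;=\; Km_b \;=\; m_bK_i \;=\; xm_aK_i \;=\; x(Km_a) \;=\; (xK)m_a.
\]
Right multiplication by $m_a$ is an injection $M\to M$ by right cancellation, so the set equality $(Kx)m_a=(xK)m_a$ forces $Kx=xK$. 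Hence $xK\in W_MK$ and $m_bH=xm_aH=(xK)\cdot m_aH$ lies in the $W_MK$-orbit of $m_aH$. In the opposite case $ym_b=m_a$, a symmetric computation places $m_aH$ in the $W_MK$-orbit of $m_bH$; either way $m_aH$ and $m_bH$ are $W_MK$-equivalent.

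Iterating this pairwise argument over the finite list $m_1H,\ldots,m_rH$ puts all generators in one common $W_MK$-orbit, which must then coincide with $\Omega_i$ itself. The step I expect to be the main obstacle is precisely the verification that $Kx=xK$: it is what forces one to work within a single $\Omega_i$ (so that the \emph{same} conjugating subgroup $K_i$ is available on both sides of the computation) rather than with arbitrary elements of $[M/K,M/H]$, and it is the only place where the right cancellation property of $M$ and the defining relation of $\Omega_i$ are both used simultaneously. The left linear property plays a purely auxiliary role: it merely ensures that any two generators can be brought into such a comparison in the first place.
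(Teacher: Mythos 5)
Your proof is correct and follows essentially the same route as the paper: apply the left linear property to get $x$ with $xm_a = m_b$ (or the other way), compute $(Kx)m_a = Km_b = m_bK_i = (xK)m_a$ using that both elements lie in the same $\Omega_i$, and cancel $m_a$ on the right to get $Kx = xK$. The only thing worth tightening in your write-up is the final iteration: since $W_MK$ is only a monoid, the pairwise comparison gives a total preorder on the finite generating list (is-in-the-orbit-of), and transitivity lets you extract a single maximal generator whose $W_MK$-orbit contains all the others and hence all of $\Omega_i$ — which is exactly what the paper means by ``reordering we may assume $\Omega_i = W_MK(m_kH)$.''
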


\begin{proof} Let $m_1H, m_2 H \in \Omega_i$. Since $M$ has the left linear property there is $f \in M$ such that $f m_1 = m_2$ or $f m_2 = m_1$. Assume $m_1 = f m_2$. Then $K f m_2 = K m_1 = m_1 K_i = f m_2 K_i = f K m_2$. Since $M$ is a cancelation monoid we deduce that $K f = f K$, so
$f K\in W_MK$. Hence $m_1 H \in W_MK (m_2 H)$.

Finally if $m_1H, \ldots, m_k H$ is a generating set of $\Omega_i$ over $W_MK$ then the previous paragraph implies that
$\Omega_i$ is transitive and reordering we may assume $\Omega_i = W_MK ( m_k H)$.

\end{proof}

\begin{lemma}\label{finite} Assume that $M$ is a cancelation monoid and that for
some fixed  finite subgroups $K,H$ of $M$  each of the $W_MK$-sets $\Omega_i$ of Lemma \ref{decomposition}
is $W_MK$-transitive. Then the $W_MK$-module
$\mathbb{Z} [M/K,M/H]$ is of type $\FP_\infty$.\end{lemma}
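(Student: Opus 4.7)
The plan is to decompose the $W_MK$-module $\Z[M/K,M/H]$ into cyclic summands indexed by the $\Omega_i$ of Lemma~\ref{decomposition}, identify each summand with $\Z[W_MK/L_i]$ for a finite subgroup $L_i$ of $W_MK$, and finish by applying Lemma~\ref{restfpinfty} to the cancelation monoid $W_MK$.

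First, the disjoint decomposition $[M/K,M/H]=\bigcup_{i=1}^{s}\Omega_i$ from Lemma~\ref{decomposition} yields $\Z[M/K,M/H]=\bigoplus_{i=1}^{s}\Z\Omega_i$ as $W_MK$-modules, so it suffices to prove that each $\Z\Omega_i$ is of type $\fpinfty$ over $W_MK$. Fix $i$, pick a generator $m_iH\in\Omega_i$ (recall $Km_i=m_iK_i$), and let $L_i=\{xK\in W_MK\mid xm_iH=m_iH\}$ be its stabilizer. For any $xK\in L_i$ there is $h\in H$ with $xm_i=m_ih$, and defining $\psi\colon L_i\to H/K_i$ by $\psi(xK)=hK_i$, one checks using $Km_i=m_iK_i$ and right cancelation in $M$ that $\psi$ is well defined on $K$-cosets and injective. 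Hence $|L_i|\le|H/K_i|<\infty$, and since $L_i$ is a finite submonoid of the cancelation monoid $W_MK$ it is in fact a finite subgroup.

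The next, and most delicate, step is to show that $\Omega_i\cong W_MK/L_i$ as $W_MK$-sets, equivalently that the natural surjection $W_MK/L_i\twoheadrightarrow\Omega_i$, $xL_i\mapsto xm_iH$, is injective. The idea is that any equality $x_1m_i=x_2m_ih$ witnessing $x_1m_iH=x_2m_iH$ forces $h\in N_H(K_i)$ (from $Kx_jm_i=x_jm_iK_i$, combined with $x_jK\in W_MK$), and cancelation in $M$ should then allow one to produce $lK\in L_i$ with $x_1K=x_2K\cdot lK$. This matching of fibres with cosets is the main obstacle I expect; it is the point where the monoid setting is harder than the group setting, since one cannot simply form $x_2^{-1}x_1$. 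Once the identification $\Z\Omega_i\cong\Z[W_MK/L_i]$ is in place, $W_MK/L_i$ is a transitive proper $W_MK$-set (generated by the class of $1\cdot K$ with finite stabilizer $L_i$), so Lemma~\ref{restfpinfty} applied to the cancelation monoid $W_MK$ with the finite subgroup $L_i$ gives that $\Z[W_MK/L_i]$ is of type $\fpinfty$ over $W_MK$. Summing over the finitely many indices completes the proof.
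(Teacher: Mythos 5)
Your identification of $L_i$ as a finite subgroup is correct and gives essentially the same bound $|L_i|\le|H/K_i|$ as the paper, which notes that by cancelation $x$ is the unique solution of $x\,m_i=f$ for a given $f\in m_iH$. The genuine gap is the step you yourself flag as the "main obstacle": establishing $\Omega_i\cong W_MK/L_i$ as $W_MK$-sets. You are right to be suspicious here -- in a monoid one cannot form $x_2^{-1}x_1$, and the orbit-stabilizer bijection is not automatic -- and since you do not carry this step out, the proof as written is incomplete.

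However, this identification is unnecessary, and pursuing it sent you off course. Lemma~\ref{restfpinfty} applies to an arbitrary restricted, $M$-finite, proper $M$-set, and the definition of proper only asks that the stabilizer of a generator of each transitive piece be a finite submonoid; it does not ask that the piece be a coset space $M/K$. Once you know each $\Omega_i$ is $W_MK$-transitive with finite subgroup stabilizer $L_i$, the whole set $[M/K,M/H]=\dot\bigcup_i\Omega_i$ is already restricted, $W_MK$-finite and proper, and Lemma~\ref{restfpinfty} applies directly to it. That is exactly the paper's proof: it observes that the hypothesis makes $[M/K,M/H]$ restricted and $W_MK$-finite, checks each $L_i$ is a finite subgroup, and cites Lemma~\ref{restfpinfty}. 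Replacing your coset-identification step with this direct appeal closes the argument and makes it coincide with the paper's.
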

\begin{proof}
The hypothesis implies that the $W_MK$-set $[M/K,M/H]$ is $W_MK$-finite and restricted. By Lemma \ref{restfpinfty}, we only have to check that it is also proper.

Set $L_{i} = \{ x K \in W_MK | x m_{i} H = m_{i} H \}$, where $\Omega_i = W_MK (m_i H)$.
The fact that $M$ is a cancelation monoid implies that for $f_1, f_2 \in m_i H$ the linear equation $x f_1 = f_2$ has at most one solution in $M$, thus
$L_{i}$ is finite.
 Since it has the left cancelation property, it is a subgroup.
\end{proof}

\begin{lemma} \label{lemmanew0} Assume that $M$ is a type  $\UFP_0$  cancelation monoid
that conjugates finite subgroups. Then $M$ has
finitely many conjugacy classes of finite
subgroups.
\end{lemma}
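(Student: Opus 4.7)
The plan is to combine the characterization of $\UFP_0$ given by Lemma \ref{fp0monoids} with the subgroup-construction argument already used inside the proof of Lemma \ref{decomposition}, and then feed the result into the \emph{conjugates finite subgroups} hypothesis.

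First, apply Lemma \ref{fp0monoids}: since $M$ is of type $\UFP_0$, there are finitely many finite subgroups $H_1,\ldots,H_n$ of $M$ such that for every finite subgroup $K\leq M$ one can find $m\in M$ (depending on $K$) with $Km\subseteq mH_i$ for some $i\in\{1,\ldots,n\}$. The next step is to upgrade this inclusion to an equality of the form $Km=m\overline K$, where $\overline K$ is a subgroup of $H_i$. This is precisely what the first paragraph of the proof of Lemma \ref{decomposition} does: define
\[
\overline K := \{\, h\in H_i \mid mh\in Km\,\};
\]
cancelation in $M$ makes $\overline K$ a subgroup of $H_i$, and the inclusion $Km\subseteq mH_i$ forces $Km=m\overline K$.

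Now invoke the hypothesis that $M$ conjugates finite subgroups, applied to the equality $Km=m\overline K$: there exists an invertible $h\in M$ with $K^{h}=\overline K$, i.e.\ $K$ is conjugate (by an invertible element of $M$) to a subgroup of some $H_i$. Since each $H_i$ has only finitely many subgroups and there are only finitely many indices $i$, the set of subgroups of $H_1,\ldots,H_n$ is finite; every finite subgroup $K\leq M$ is conjugate by an invertible element to one of these, which is exactly the statement that $M$ has finitely many conjugacy classes of finite subgroups in the sense of Definition \ref{conj}.

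There is no serious obstacle here: the only mildly subtle point is to make sure that the notion of conjugacy produced by the \emph{conjugates finite subgroups} hypothesis (conjugation by an invertible element of $M$) is the same notion used in the conclusion (Definition \ref{conj}), which it is. Everything else is just assembling Lemma \ref{fp0monoids}, the subgroup construction inside Lemma \ref{decomposition}, and the hypothesis.
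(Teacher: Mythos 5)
Your proposal is correct and follows essentially the same route as the paper: apply Lemma \ref{fp0monoids} to get the finite family $H_1,\ldots,H_n$, then use the $\overline K$-construction from the first paragraph of the proof of Lemma \ref{decomposition} to upgrade $Km\subseteq mH_i$ to $Km=m\overline K$ with $\overline K\leq H_i$, and finally invoke the \emph{conjugates finite subgroups} hypothesis. The only difference is cosmetic: you spell out explicitly the last step (each $H_i$ has finitely many subgroups, so finitely many conjugacy-class representatives suffice), which the paper leaves implicit.
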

\begin{proof} By Lemma \ref{fp0monoids} there are finitely
many finite subgroups $H_1,\ldots,H_s$ such that
for each finite subgroup $K$ of $ M$ we have $Km\subseteq mH_i$ for
some $m\in M$ and some $i=1,\ldots,s$. Then
following the first part of the proof of Lemma
\ref{decomposition} we see that $Km = m \bar{K}$, where $\bar{K}$ is a finite subgroup of $H_i$. Then since $M$ conjugates finite subgroups there is an invertible $t \in M$ such that $K^t = \bar{K} \leq H_i$.
\end{proof}

\begin{corollary}\label{conditionFP0} Let $M$
be a type  $\UFP_0$  cancelation  monoid
that conjugates finite subgroups.  Then a $\OFM$-module $V$ is of
type $\UFP_n$ if and only if for each finite
 subgroup $K$ of $M$,  $V(M/K)$ is a module of type $\FP_n$
over $W_MK.$
\end{corollary}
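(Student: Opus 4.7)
My plan is to derive Corollary \ref{conditionFP0} as a direct application of Proposition \ref{conditionFPn}; the whole task reduces to verifying that its two hypotheses follow from the weaker assumptions at hand. Hypothesis (ii) is cheap: Lemma \ref{lemmanew0} tells us that a $\UFP_0$ cancelation monoid that conjugates finite subgroups automatically has only finitely many conjugacy classes of finite subgroups. So the substance of the proof lies in establishing hypothesis (i), i.e.\ that for every pair of finite subgroups $K, L \le M$ the $W_MK$-module $\Z[M/K, M/L]$ is of type $\FP_\infty$.

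For this I would appeal to Lemma \ref{finite}, which reduces the $\FP_\infty$ claim to checking that each of the $W_MK$-sets $\Omega_i = \{mL\mid Km = mK_i\}$ appearing in the decomposition of Lemma \ref{decomposition} is $W_MK$-transitive. In the previous arguments, transitivity was established via the left linear property (Lemma \ref{transitivity1}); here I must produce a replacement argument using the \emph{conjugates finite subgroups} hypothesis. Given that $\Omega_i$ is nonempty, pick any $m_0 L \in \Omega_i$, so that $K m_0 = m_0 K_i$. The conjugacy hypothesis then supplies an invertible element $h \in M$ with $K^h = K_i$, i.e.\ $Kh = hK_i$; in particular $hL\in[M/K,M/L]$ and in fact $hL\in\Omega_i$.

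Now given any $mL \in \Omega_i$, the element $mh^{-1}$ lies in $M$ (since $h$ is invertible in $M$), and one computes
$$K(mh^{-1}) = mK_i h^{-1} = mh^{-1}K,$$
where the first equality uses $Km=mK_i$ and the second uses $K_i h^{-1} = h^{-1}K$, which is obtained from $Kh = hK_i$ by left and right multiplication by $h^{-1}$. Hence $mh^{-1}K \in W_MK$ and $(mh^{-1}K) \cdot (hL) = mL$, exhibiting $\Omega_i$ as a single $W_MK$-orbit $W_MK\cdot(hL)$. Combined with Lemma \ref{finite} this yields hypothesis (i) of Proposition \ref{conditionFPn}, and invoking that proposition finishes the proof.

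The only point that I expect to require some care is the manipulation with the invertible element $h^{-1}$: one needs to check that $mh^{-1}\in M$ (which is immediate once $h^{-1}\in M$), that cancelation lets us pass from $Kh = hK_i$ to $h^{-1}K = K_ih^{-1}$ as equalities of subsets of $M$, and that the resulting $(mh^{-1})K$ genuinely represents an element of $W_MK$ in the sense used throughout the section. None of these look problematic given that $M$ is assumed to be a cancelation monoid, so I do not foresee a deeper obstacle beyond this bookkeeping.
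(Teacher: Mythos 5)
Your proof is correct and follows the same route as the paper: reduce via Proposition \ref{conditionFPn}, discharge hypothesis (ii) through Lemma \ref{lemmanew0}, and discharge hypothesis (i) through Lemma \ref{finite} by showing each $\Omega_i$ is $W_MK$-transitive using an invertible element supplied by the ``conjugates finite subgroups'' hypothesis. The paper's argument for transitivity is stated more tersely (it simply asserts $\Omega_i = W_MK(m_iH)$), whereas you write out the bookkeeping with $h^{-1}$ explicitly; the content is the same.
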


\proof  By Proposition \ref{conditionFPn}, Lemma \ref{finite} and Lemma \ref{lemmanew0}, it suffices to show that  for any finite  subgrousp $K,H$ of $M$ each of the the $W_MK$-sets $\Omega_i$ of Lemma \ref{decomposition} is transitive. To see it,  note that since $M$ conjugates finite subgroups  for each $K_i$ with $Km=mK_i$, we must have $K_i=K^{m_i}$ for some $m_i\in M$ invertible. Then $\{mH\mid Km=mK_i\}=W_KM(m_iH)$.
   \qed

\begin{corollary} \label{FPmproperty}
A  cancelation monoid $M$ which conjugates finite subgroups is
of type $\UFP_n$ if and only if there are
finitely many finite subgroups $H_1,...,H_s$ such
that for each finite subgroup $K$ of $M$ there is an
element $m \in M$ such that $Km \subseteq mH_i$
for some $i =1,...,s$ and the monoid $W_MK$ is of type
$\FP_n.$
\end{corollary}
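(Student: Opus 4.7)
The plan is to obtain this corollary as a direct combination of Lemma \ref{fp0monoids} and Corollary \ref{conditionFP0}, applied to the trivial Bredon module $\underline{\Z}$. Recall that by definition $M$ is of type $\UFP_n$ precisely when $\underline{\Z}$ is of type $\UFP_n$ as an $\OFM$-module, and note that $\underline{\Z}(M/K) = \Z$ with the trivial $W_MK$-action, so $\Z$ is of type $\FP_n$ over $\Z[W_MK]$ if and only if the monoid $W_MK$ itself is of type $\FP_n$.

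First I would treat the forward implication. If $M$ is of type $\UFP_n$, then in particular $M$ is of type $\UFP_0$, so Lemma \ref{fp0monoids} furnishes finite subgroups $H_1,\ldots,H_s$ with the stated covering property. Since $M$ is a cancelation monoid conjugating finite subgroups, Corollary \ref{conditionFP0} applies with $V=\underline{\Z}$: evaluating at each $M/K$ gives that $\Z=\underline{\Z}(M/K)$ is of type $\FP_n$ over $W_MK$, i.e.\ $W_MK$ is of type $\FP_n$, as desired.

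For the converse, suppose the finite covering property holds and each Weyl monoid $W_MK$ is of type $\FP_n$. Lemma \ref{fp0monoids} immediately yields that $M$ is of type $\UFP_0$, which is the running hypothesis needed to invoke Corollary \ref{conditionFP0}. Applying that corollary again to $V=\underline{\Z}$, the fact that $\underline{\Z}(M/K)=\Z$ is of type $\FP_n$ over each $W_MK$ (because $W_MK$ itself is $\FP_n$) yields that $\underline{\Z}$ is of type $\UFP_n$ as an $\OFM$-module, i.e.\ $M$ is of type $\UFP_n$.

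Since the proof is a direct packaging of the two earlier results, there is no genuine obstacle; the only point deserving a line of care is the translation between \lq\lq$\Z$ is $\FP_n$ over $W_MK$'' and \lq\lq$W_MK$ is of type $\FP_n$'', which is just the definition of the monoid finiteness property.
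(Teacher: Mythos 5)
Your proof is correct and follows exactly the same route as the paper, which also obtains the corollary by combining Lemma \ref{fp0monoids} with Corollary \ref{conditionFP0} applied to $V=\underline{\Z}$. The extra sentence unwinding the equivalence between \lq\lq$\Z$ is $\FP_n$ over $W_MK$'' and \lq\lq$W_MK$ is of type $\FP_n$'' is a reasonable point of care but is just the definition, as you note.
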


\begin{proof} It follows from Lemma \ref{fp0monoids} and Corollary \ref{conditionFP0} applied for $V = \underline{\mathbb{Z}}$.
\end{proof}

\section{The case of monoids obtained from characters}

Let $G$ be a group and $\chi : G \to \R$ be a
non-zero homomorphism. Consider the monoid
$$G_{\chi} = \{ g \in G | \chi(g) \geq 0 \}.
$$
Note that every finite subgroup of $G$ is contained in
$G_{\chi}$.
Recall that
\begin{equation}  \label{remark} \hbox{ if }G_{\chi}\hbox{ is of type
}FP_m \hbox{ then
}G \hbox{ is of
type
}FP_m. \end{equation}
From now on fix the monoid $M = G_{\chi}$. Note
that $W_MK=(W_GK)_\chi = N_G(K)_{\chi} / K$.

\begin{lemma} Let $\chi : G \to \mathbb{R}$ be a non-zero character. Then $G_\chi$  is a cancelation monoid with the left linear  property.
\end{lemma}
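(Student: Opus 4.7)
The plan is to exploit directly the fact that $G_\chi$ sits inside the ambient group $G$, so both properties reduce to computations in $G$.

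For the cancelation property, suppose $m,m_1,m_2\in G_\chi$ with $mm_1=mm_2$. Since this equality takes place in $G$ and $G$ is a group, one cancels $m$ to get $m_1=m_2$; the same argument handles right cancelation. So $G_\chi$ inherits cancelation from $G$ with no further work.

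For the left linear property, given $m_1,m_2\in G_\chi$, I would consider the candidate solutions $x=m_2m_1^{-1}$ of $xm_1=m_2$ and $y=m_1m_2^{-1}$ of $ym_2=m_1$, both of which lie in $G$ automatically. Since $\chi$ is a homomorphism, $\chi(x)=\chi(m_2)-\chi(m_1)$ and $\chi(y)=\chi(m_1)-\chi(m_2)$, so exactly one of $\chi(x)\geq 0$ or $\chi(y)\geq 0$ holds (both, if $\chi(m_1)=\chi(m_2)$). In either case the corresponding solution lies in $G_\chi$, which establishes the left linear property.

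There is essentially no obstacle: the key observation is simply that $G_\chi$ is a submonoid of a group, which trivialises cancelation, and that $\chi$ is $\mathbb{R}$-valued so one can compare $\chi(m_1)$ and $\chi(m_2)$ by a trichotomy, which trivialises the linear condition. The whole proof fits in a few lines.
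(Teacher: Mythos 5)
Your proof is correct and follows essentially the same route as the paper: cancelation is inherited from the ambient group, and the left linear property follows because the $\chi$-values of the two candidate solutions are negatives of each other, hence at least one lies in $G_\chi$. (One small wording slip: you write ``exactly one of $\chi(x)\geq 0$ or $\chi(y)\geq 0$ holds'' and then note both can hold, which contradicts ``exactly one''; you of course mean ``at least one''.)
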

\begin{proof} Let $f_1, f_2 \in G_{\chi}$. Then either $f_1^{-1} f_2 \in G_{\chi}$ or $f_2^{-1} f_1 \in G_{\chi}$, so $G_{\chi}$ has the left linear property. Since $G_{\chi}$ is embedable in a group it is a cancelation monoid.
\end{proof}

\begin{remark} As a consequence of this and of Lemma \ref{transitivity1}, we get that if $\chi : G \to \mathbb{R}$ is a non-zero character and $H,K$ are finite subgroups of $G$, then $[G_\chi/K,G_\chi/H]$ is finitely generated as $W_{G_\chi}K$-set if and only if each of the sets $\Omega_i$ in Lemma \ref{decomposition} is $W_{G_\chi}K$-transitive.
\end{remark}

\begin{lemma} \label{conjfinite} Let $\chi : G \to \mathbb{R}$ be a non-zero character. Then $G_\chi$
conjugates finite subgroups if and only if
$G_\chi\subseteq N_G(K)\text{Ker} (\chi) $ for any
finite subgroup $K$ of $ G.$ This is equivalent to $G = N_G(K)\text{Ker}(\chi)$.
\end{lemma}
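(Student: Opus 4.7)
The plan is to unwind the definition by noting two facts about $G_\chi$: first, every finite subgroup of $G$ lies in $G_\chi$ (since $\chi$ must vanish on torsion), so the finite subgroups of the monoid $G_\chi$ are exactly the finite subgroups of the ambient group $G$; second, an element $h\in G_\chi$ is invertible in the monoid $G_\chi$ if and only if both $h$ and $h^{-1}$ satisfy $\chi\geq 0$, that is, if and only if $h\in\ker(\chi)$. With this identification, ``$G_\chi$ conjugates finite subgroups'' translates to: whenever $H_1g=gH_2$ for finite $H_1,H_2\leq G$ and $g\in G_\chi$, there exists $h\in\ker(\chi)$ with $H_1^h=H_2$.

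For the forward direction, I would take a finite subgroup $K\leq G$ and an arbitrary $g\in G_\chi$, and consider the finite subgroup $K^g=g^{-1}Kg$, which also sits in $G_\chi$. The tautological relation $Kg=gK^g$ (with $H_1=K$, $H_2=K^g$) then triggers the hypothesis and yields an invertible $h\in G_\chi$, i.e.\ $h\in\ker(\chi)$, with $K^h=K^g$. This forces $gh^{-1}\in N_G(K)$, hence $g=(gh^{-1})h\in N_G(K)\ker(\chi)$, giving $G_\chi\subseteq N_G(K)\ker(\chi)$. Conversely, assuming $G_\chi\subseteq N_G(K)\ker(\chi)$ for every finite $K$, take $H_1,H_2$ finite subgroups and $g\in G_\chi$ with $H_1g=gH_2$, so that $H_2=H_1^g$. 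Applying the hypothesis to $K=H_1$ gives $g=nh$ with $n\in N_G(H_1)$ and $h\in\ker(\chi)$; then $H_2=H_1^g=H_1^{nh}=H_1^h$, and since $h$ is invertible in $G_\chi$, this verifies the conjugation property.

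For the final equivalence, I would use that $\ker(\chi)$ contains $G'$ (because $\chi$ factors through the abelianisation) and is therefore normal in $G$, so $N_G(K)\ker(\chi)=\ker(\chi)N_G(K)$ is actually a subgroup of $G$. One direction is trivial: if $G=N_G(K)\ker(\chi)$ then of course $G_\chi\subseteq N_G(K)\ker(\chi)$. For the other direction, given $g\in G\setminus G_\chi$ we have $\chi(g)<0$, so $g^{-1}\in G_\chi$; writing $g^{-1}=nh$ with $n\in N_G(K)$, $h\in\ker(\chi)$, we obtain $g=h^{-1}n^{-1}\in\ker(\chi)N_G(K)=N_G(K)\ker(\chi)$.

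The argument is essentially formal; the only mildly delicate point is to ensure that the element $h$ produced in either direction is \emph{invertible} in the monoid $G_\chi$ rather than merely belonging to it, and this is exactly what forces the appearance of $\ker(\chi)$. The normality of $\ker(\chi)$ in $G$ is what lets us upgrade the one-sided inclusion $G_\chi\subseteq N_G(K)\ker(\chi)$ to the group-theoretic equality $G=N_G(K)\ker(\chi)$.
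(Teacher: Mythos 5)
Your proof is correct and follows essentially the same route as the paper: unwind the definition using that invertible elements of $G_\chi$ are exactly $\ker(\chi)$, apply the conjugation hypothesis to the pair $(K, K^g)$ for the forward direction, factor $g=nh$ for the converse, and upgrade $G_\chi\subseteq N_G(K)\ker(\chi)$ to equality with $G$ using the subgroup structure. The only cosmetic difference is that you invoke normality of $\ker(\chi)$ explicitly to justify that $N_G(K)\ker(\chi)$ is a subgroup, whereas the paper states this without comment.
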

\begin{proof} Assume first that $G_\chi$
conjugates finite subgroups and consider a finite subgroup  $K$ of $G$. Then, for any $m\in G_\chi$ if we put
$K_1:=K^m\leq G$, we have $mK_1=Km$ thus
$K_1=K^t$ for some $t\in G_\chi$ invertible, in
particular $t\in\text{Ker}(\chi)$. Since $mt^{-1}\in
N_G(K)$ we get $m \in N_G(K) t \subseteq N_G(K)\text{Ker}(\chi)$, so $G_{\chi} \subseteq N_G(K)\text{Ker}(\chi)$.

Conversely, assume $G_\chi\subseteq
N_G(K)\text{Ker} (\chi)$ and let $K,K_1\leq G_\chi$
 be finite subgroups such that $mK_1=Km$ for $m\in G_\chi$. Put $m=st$
with $s\in N_G(K)$ and $t\in\text{Ker}(\chi)$, then
$t\in G_\chi$ is invertible and $K^t=K^m=K_1$.

In addition note that $G = G_{\chi} \cup G_{\chi}^{-1}$ and $N_G(K)\text{Ker}(\chi)$ is a subgroup of $G$, so if  $G_\chi\subseteq N_G(K)\text{Ker}(\chi)$ we get $G = N_G(K)\text{Ker}(\chi)$.
\end{proof}

\begin{lemma}\label{conjfinite1}   Let $\chi : G \to \mathbb{R}$ be a non-zero character, $\widetilde G \leq G$ a subgroup and $N$ a normal subgoup of $G$ with $N\subseteq\text{Ker}(\chi)\cap \widetilde G$. Assume that
$\widetilde G_\chi\subseteq N_G(K)N$ for every finite subgroup $K$ of $\widetilde{G}$. Then $\widetilde G_\chi$  conjugates finite subgroups.
\end{lemma}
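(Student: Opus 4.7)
The plan is to mimic the second half of the proof of Lemma \ref{conjfinite}, with the extra care that the invertible element we end up producing lies in $\widetilde{G}_\chi$ and not merely in $G_\chi$. Concretely, I would start by fixing finite subgroups $K,K_1\leq\widetilde{G}_\chi$ and an element $m\in\widetilde{G}_\chi$ such that $mK_1=Km$; equivalently $K_1=K^m$. Since $K$ is a finite subgroup of $\widetilde{G}$, the standing hypothesis $\widetilde{G}_\chi\subseteq N_G(K)N$ lets me decompose $m=sn$ with $s\in N_G(K)$ and $n\in N$, and then $K_1=K^m=K^{sn}=K^n$.

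The heart of the argument, and the only place where the assumptions on $N$ are used, is to verify that $n$ is invertible inside the monoid $\widetilde{G}_\chi$, not only inside $G_\chi$. Because $N$ is a subgroup of $G$, one has $n^{-1}\in N$ as well; the hypothesis $N\subseteq\ker(\chi)\cap\widetilde{G}$ then gives both $n,n^{-1}\in\widetilde{G}\cap\ker(\chi)\subseteq\widetilde{G}_\chi$, so $n$ is indeed an invertible element of $\widetilde{G}_\chi$ realizing the conjugation $K\mapsto K_1$. This matches the definition (Definition \ref{conj}) of $\widetilde{G}_\chi$ conjugating finite subgroups.

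There is essentially no obstacle beyond bookkeeping: unlike in Lemma \ref{conjfinite}, one does not have the identity $\widetilde{G}=N_G(K)N$ on all of $\widetilde{G}$ available (only on $\widetilde{G}_\chi$), but this is irrelevant because the defining property only needs to produce an invertible conjugator for elements of $\widetilde{G}_\chi$ in the first place. The choice of $N$ instead of the full $\ker(\chi)$ in the hypothesis is precisely what ensures that the \emph{inverse} $n^{-1}$ stays inside $\widetilde{G}$, which is the single subtlety to check.
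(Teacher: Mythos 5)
Your argument is correct and in substance the same as the paper's: both rest on decomposing $m = sn$ with $s$ in the appropriate normalizer and $n \in N$ an invertible element of $\widetilde{G}_\chi$. The paper packages this more tersely, establishing the chain of inclusions $\widetilde{G}_\chi \subseteq N_G(K)N \cap \widetilde{G} \subseteq N_{\widetilde{G}}(K)N \subseteq N_{\widetilde{G}}(K)(\text{Ker}(\chi) \cap \widetilde{G})$ and then invoking Lemma \ref{conjfinite} for the group $\widetilde{G}$ with the restricted character, whereas you unfold that reduction in place; the underlying mathematics is identical.
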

\begin{proof} As $$\widetilde G_\chi\subseteq N_G(K)N\cap\widetilde G\subseteq N_{\widetilde G}(K)N\subseteq N_{\widetilde G}(K)(\text{Ker}(\chi)\cap\widetilde G)$$
it suffices to use Lemma \ref{conjfinite}.
\end{proof}

\begin{lemma} \label{characterconjugates} Let $M = G_{\chi}$. Then  for all finite subgroups $K$ and $H$ of $M$,
$[M/K,M/H]$ is finitely generated as
$W_MK$-set if and only if  for every finite $K \leq M$ one of the following conditions holds:

 (i) $\chi(N_G(K)) = 0$ i.e. $N_G(K) \subseteq\text{Ker}(\chi)$;

 (ii) $\chi(N_G(K)) \simeq \mathbb{Z}$ i.e $N_G(K)\text{Ker}(\chi) /\text{Ker}(\chi)$ has torsion-free rank 1;

 (iii) $\chi(N_G(K)) = \chi(G)$ i.e. $N_G(K)\text{Ker}(\chi) = G$.
\end{lemma}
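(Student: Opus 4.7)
The plan is to combine Lemma \ref{decomposition} with the Remark following Lemma \ref{transitivity1}. Writing $[M/K,M/H]=\bigcup_{i=1}^{s}\Omega_i$ with $\Omega_i=\{mH\mid Km=mK_i\}$, the finite generation of $[M/K,M/H]$ as a $W_MK$-set is equivalent to the $W_MK$-transitivity of every non-empty $\Omega_i$, so the task reduces to characterizing when each such $\Omega_i$ is $W_MK$-transitive.

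For each non-empty $\Omega_i$ I would fix an $m_0\in M$ with $K^{m_0}=K_i$ and identify $\Omega_i$ with the set of cosets $gH$ for $g\in N_G(K)m_0H\cap M$. Writing each such $g$ as $g=nm_0h$ with $n\in N_G(K)$, $h\in H$, I would first verify that the number $\chi(n)=\chi(g)-\chi(m_0)$ depends only on $g$: the ambiguity in the decomposition lies in $N_G(K)\cap m_0Hm_0^{-1}=m_0N_H(K_i)m_0^{-1}$, whose elements are of finite order and hence have $\chi$-value zero. A direct computation then identifies the $W_MK$-orbit of $m_0H$ inside $\Omega_i$ with $\{gH\in\Omega_i\mid\chi(g)\geq\chi(m_0)\}$. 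Consequently $\Omega_i$ is $W_MK$-transitive if and only if $\chi$ attains its infimum on $\Omega_i$, equivalently the set $(\chi(N_G(K))+\chi(m_0))\cap[0,\infty)$ has a minimum.

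Since $\chi(N_G(K))$ is a subgroup of $\R$, this minimum is attained precisely when either $\chi(N_G(K))$ is discrete in $\R$ (i.e.\ zero or infinite cyclic) or $-\chi(m_0)\in\chi(N_G(K))$. The three conditions (i), (ii), (iii) cover exactly the cases in which this attainment holds uniformly over every choice of $H$, $K_i$ and $m_0$: (i) and (ii) force discreteness of $\chi(N_G(K))$; (iii) forces $\chi(m_0)\in\chi(G)=\chi(N_G(K))$ for any $m_0\in G$. For the converse, if (i)--(iii) all fail, then $\chi(N_G(K))$ is a dense proper subgroup of $\chi(G)$; choosing $g\in G$ with $\chi(g)\in\chi(G)\setminus\chi(N_G(K))$, setting $H=K^g$ and $K_i=K^g$, and modifying $g$ to $m_0=ng\in M$ with $n\in N_G(K)$ (possible since $\chi(N_G(K))$ is unbounded above), one obtains $\chi(m_0)\notin\chi(N_G(K))$ and hence a non-transitive $\Omega_i$, showing that $[M/K,M/H]$ is not finitely generated.

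The main technical step will be the well-definedness of the invariant $\chi(n)$ and the ensuing identification of the $W_MK$-orbit of $m_0H$ with the half-space $\{\chi\geq\chi(m_0)\}$ inside $\Omega_i$; once this is in place, the trichotomy (i)--(iii) drops out from the classification of the possible positions of the subgroup $\chi(N_G(K))\leq\R$ inside $\chi(G)$.
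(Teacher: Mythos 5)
Your proposal follows essentially the same route as the paper's proof: both decompose $[M/K,M/H]$ via Lemma~\ref{decomposition}, identify each $\Omega_i$ with cosets coming from $N_G(K)m_i\cap M$, reduce finite generation to the question of whether $\inf\{\chi(g):g\in N_G(K),\ \chi(g)\geq -\chi(m_i)\}$ is attained, and then read the trichotomy (i)--(iii) off the position of the subgroup $\chi(N_G(K))\leq\R$. Your extra step on the well-definedness of $\chi(n)$ in the decomposition $g=nm_0h$ is harmless but unnecessary -- since $H$ is finite, $\chi(H)=0$, so $\chi(gm_0^{-1})$ is well-defined with no further argument; the paper avoids this by working directly with $\chi(gm_i)$ rather than factoring through $H$.
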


\begin{proof}  Let $K_1, \ldots , K_s$ be the subgroups in the decomposition of Lemma \ref{decomposition}. Note that these are precisely the subgroups of $H$ which are
 conjugated to $K$ by an element lying in $M$. Choose for each of them some $m_i\in M$ with $K^{m_i} = K_i$. Then $[M/K, M/H] =  \cup_{1 \leq i \leq s} \{ mH \mid K^{m} = K^{m_i} \}$.
Thus the set  $[M/K, M/H]$ is finitely generated over $W_MK$ if and only if for every $1\leq i\leq s$ the set $(N_G(K) m_i) \cap M$ is finitely generated over $N_M(K) = N_G(K) \cap M$  via left multiplication.
 This is equivalent to $\text{inf} \{ \chi (g m_i) |g \in N_G(K),  gm_i \in G_{\chi} \} $ being attained which is equivalent to $\text{inf} \{ \chi(g) | g \in N_G(K), \chi(g) \geq - \chi(m_i) \}$ being attained. Note that this is the case if one of the conditions (i), (ii) or (iii) holds.

Let $m_0 \in G_{\chi}$. Suppose that $[M/K,M/H]$ is finitely generated as
$W_MK$-set for $H = K^{m_0}$ and that neither of the conditions (i) and (ii) hold. Then the restriction of $\chi$ on $N_G(K)$ is a non-discrete non-zero real character, hence $\chi(N_G(K))$ is a dense subset of $\mathbb{R}$. Then
$[M/K, M/H] = \{ mH \mid Km \subseteq mH = m K^{m_0} \} = \{ m H \mid K^m \subseteq K^{m_0}\} =   \{ mH \mid K^{m} = K^{m_0} \}$ i.e. $s=1$ and $m_1 = m_0$ and
the above infimum is attained if and only if $  - \chi (m_0) \in \chi(N_G(K))$ i.e.
$\chi(G_{- \chi}) \subseteq \chi(N_G(K))$. Since $N_G(K)$ is a group this is equivalent to $\chi(G) = \chi(N_G(K))$ i.e. $G = N_G(K)\text{Ker}(\chi)$.
\end{proof}

\begin{corollary}  Let $N$ be the subgroup of $G$ that contains the commutator $G'$  and such that $N / G '$ is the torsion part of $G/G'$. Then the assumptions of Lemma \ref{characterconjugates} hold for every non-zero character $\chi : G \to \mathbb{R}$  if for every finite subgroup  $K$ of $ G$   one of the following conditions holds :

(i) $N_G(K) \subseteq N$;

(ii) $N_G(K) N / N \simeq \mathbb{Z}$;

(iii) $N_G(K) N = G$.
\end{corollary}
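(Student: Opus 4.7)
The plan is to translate the hypothesis on $N_G(K)$ and $N$ into the hypothesis on $\chi(N_G(K))$ and $\chi(G)$ that is required by Lemma \ref{characterconjugates}. The key observation, which I would establish first, is that any non-zero real character $\chi : G \to \mathbb{R}$ vanishes on $N$. Indeed $\chi$ factors through $G^{ab}=G/G'$, and since $\mathbb{R}$ is torsion-free the induced map annihilates the torsion subgroup $N/G'$. Hence $\chi$ factors through a homomorphism $\bar\chi : G/N \to \mathbb{R}$, with $G/N$ a free abelian group.

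Now I would fix a non-zero character $\chi$ and a finite subgroup $K\leq G$, and handle each of the three cases of the hypothesis separately. If $N_G(K)\subseteq N$, then $\chi(N_G(K))\subseteq \chi(N)=0$, giving condition (i) of Lemma \ref{characterconjugates}. If $N_G(K)N=G$, then since $\chi(N)=0$ we have $\chi(N_G(K))=\chi(N_G(K)N)=\chi(G)$, giving condition (iii). Finally if $N_G(K)N/N\cong \mathbb{Z}$, then $\chi$ restricted to $N_G(K)$ factors through the cyclic quotient $N_G(K)/(N_G(K)\cap N)\cong N_G(K)N/N\cong\mathbb{Z}$; the image $\chi(N_G(K))$ is therefore either trivial or infinite cyclic as a subgroup of $\mathbb{R}$, yielding either condition (i) or condition (ii) of Lemma \ref{characterconjugates}.

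There is no real obstacle here: the entire content is the vanishing $\chi(N)=0$, after which each of the three structural hypotheses (i)--(iii) translates directly into the corresponding hypothesis of Lemma \ref{characterconjugates}. Consequently the assumptions of that lemma hold for every non-zero $\chi$, and the corollary follows.
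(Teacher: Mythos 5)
Your proof is correct and takes essentially the same approach as the paper's: you establish $\chi(N)=0$ (the paper uses this implicitly when writing $N\subseteq\text{Ker}(\chi)$) and then translate each of the three structural conditions on $N_G(K)$ into the corresponding condition on $\chi(N_G(K))$ required by Lemma \ref{characterconjugates}. The only difference is that the paper's proof also establishes, in a first paragraph, the converse implication (by evaluating at a character $\chi_0$ with $\text{Ker}(\chi_0)=N$), treating the corollary as an if-and-only-if; since the corollary as written only states the sufficiency, your argument covers exactly what is claimed.
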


\begin{remark} For different $K$ we may need different conditions from the list above.
\end{remark}
\begin{proof} Take a character $\chi_0 : G \to \mathbb{R}$ with $\text{Ker}(\chi_0) = N$. Then if Lemma \ref{characterconjugates} holds for any character $\chi$ it holds for $\chi = \chi_0$ and we are done.

For the converse let $\chi : G \to \mathbb{R}$ be an arbitrary non-zero character.
Suppose  $N_G(K) \subseteq N$. Then $N_G(K) \subseteq N \subseteq \text{Ker}(\chi)$.
If $N_G(K) N / N \simeq \mathbb{Z}$ then since $\chi (N) = 0$ we get that $\chi(N_G(K)) $ is cyclic i.e. is either zero or $\chi(N_G(K)) \simeq \mathbb{Z}$.
Finally if  $N_G(K) N = G$, applying $\chi$ we get $\chi(N_G(K)) = \chi(G)$.
\end{proof}

\begin{lemma} \label{pascoa} Let $M = G_{\chi}$.  Then $M$ has finitely many conjugacy classes of finite subgroups
if and only if the following two conditions hold

(i) $N_G(K) \text{Ker}(\chi)$ has finite index in $G$ for every finite subgroup $K$;

(ii)  $G$ has finitely many conjugacy classes of finite subgroups.
\end{lemma}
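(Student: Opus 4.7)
My plan is to reduce $M$-conjugacy of finite subgroups to $\ker(\chi)$-conjugacy in $G$, then count orbits via the double coset formula.

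\textbf{Step 1: Translate $M$-conjugacy into the group $G$.} I would first observe two basic facts. Every finite subgroup $K$ of $G$ is contained in $M=G_\chi$, since any torsion element $g$ satisfies $\chi(g)=0$. Moreover, an element $g\in M$ is invertible in $M$ if and only if $\chi(g)=0$, so the group of units of $M$ is exactly $\ker(\chi)$. By Definition \ref{conj}, therefore, two finite subgroups of $M$ are $M$-conjugate if and only if they are conjugate by an element of $\ker(\chi)$. In particular, the finite subgroups of $M$ are precisely the finite subgroups of $G$, and $M$-conjugacy classes correspond to $\ker(\chi)$-conjugacy classes in $G$.

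\textbf{Step 2: Count via double cosets.} For a fixed finite subgroup $K\leq G$, its $G$-conjugacy class is in bijection with $G/N_G(K)$. The action of $\ker(\chi)$ by conjugation on this orbit partitions it into orbits indexed by the double coset space $\ker(\chi)\backslash G/N_G(K)$. Since $\ker(\chi)$ is normal in $G$, I have $g\ker(\chi)N_G(K)=\ker(\chi)gN_G(K)$, so this double coset space coincides with the coset space $G/(N_G(K)\ker(\chi))$, whose cardinality is $[G:N_G(K)\ker(\chi)]$. Hence the number of $M$-conjugacy classes of finite subgroups that are $G$-conjugate to $K$ equals $[G:N_G(K)\ker(\chi)]$.

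\textbf{Step 3: Assemble both directions.} The total number of $M$-conjugacy classes of finite subgroups of $M$ equals
\[
\sum_{[K]}[G:N_G(K)\ker(\chi)],
\]
where the sum runs over $G$-conjugacy classes of finite subgroups. This sum is finite exactly when there are finitely many $G$-conjugacy classes of finite subgroups (condition (ii)) and each index $[G:N_G(K)\ker(\chi)]$ is finite (condition (i)). Both implications of the lemma follow immediately: the forward direction is clear since a finite total forces each summand and the number of classes to be finite; the backward direction is the finiteness of a finite sum of finite numbers.

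No step looks subtle; the only point requiring care is the identification of invertibles in $G_\chi$ and the normality of $\ker(\chi)$ to pass from double cosets to cosets of a subgroup. Since the characterisation of units of $G_\chi$ is immediate and $\ker(\chi)\trianglelefteq G$ is automatic, there is no serious obstacle.
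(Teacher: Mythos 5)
Your proof is correct and follows essentially the same line of reasoning as the paper: both reduce $M$-conjugacy to conjugacy by the units of $G_\chi$ (namely $\ker\chi$) and then observe that the $\ker\chi$-orbits inside a single $G$-conjugacy class are counted by $[G:N_G(K)\ker\chi]$. Your double-coset formulation just makes explicit what the paper phrases as the $G$-orbit being "finitely generated over $\ker\chi$."
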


\begin{proof} Suppose first that $M$ has finitely many conjugacy classes of finite subgroups (see Definition \ref{conj}). Then since any finite subgroup of $G$ lies in $M$ (ii) follows immediately. Let $K_1, \ldots , K_r$ be representatives of the conjugacy classes of finite subgroups in $G$.
 Then for each $i$, the $G$-orbit (via right conjugation) generated by $K_i$ is finitely generated over the subgroup of invertible elements of $G_\chi$, i.e. $\text{Ker}(\chi)$. This is equivalent to the existence of a finite set $m_{i,1}, \ldots , m_{i,j_i}$ such that  $G = \cup_{1 \leq j \leq j_i} N_G(K_i)\text{Ker}({\chi}) m_j$. The last is equivalent to  $N_G(K)\text{Ker}(\chi)$ having finite index in $G$.

The converse follows by repeating the above argument backwards.
\end{proof}

\section{Some general facts about finite index extensions}

Assume a group $T$ acts by conjugation on a finitely generated group  $G$. This induces a right action of $T$ on $Hom(G, \mathbb{R})$
given by:
$$\chi^t(g):= \chi(g^{t^{-1}}).$$
Moreover, this actions leaves $\Sigma^n(G,\Z)$ setwise fixed for any $n$.

\begin{lemma}\label{finext} Let $G$ be a finitely generated group with a finite index subgroup $H$.  Then:
\begin{itemize}
\item[i)] For $\nu_1,\nu_2\in Hom(G, \mathbb{R})$, $\nu_1=\nu_2$ if and only if $\nu_1 |_H= \nu_2 \mid_H$.

\item[ii)] If $G=K\ltimes H$ for some (finite) $K$, the characters of $H$ that can be extended to $G$ are precisely those in the fixed points  $Hom(H, \mathbb{R})^G$.

\end{itemize}
\end{lemma}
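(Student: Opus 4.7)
For part (i), the plan is to use the finite-index hypothesis to reduce evaluation on $G$ to evaluation on $H$. Since $[G:H]<\infty$, for every $g\in G$ there is a positive integer $n$ (for instance $n=[G:H]$ will do, after passing through the core) such that $g^n\in H$. Then, because $\nu_1,\nu_2$ are homomorphisms into the torsion-free group $\R$, the identity
\[
n\nu_i(g)=\nu_i(g^n)
\]
recovers $\nu_i(g)$ from $\nu_i\mid_H$. Thus $\nu_1\mid_H=\nu_2\mid_H$ forces $n\nu_1(g)=n\nu_2(g)$, hence $\nu_1(g)=\nu_2(g)$ for every $g$. The converse is immediate.

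For the ``only if'' direction of (ii), I would note that if $\chi\in\Hom(G,\R)$ extends $\nu\in\Hom(H,\R)$, then for any $g\in G$ and any $h\in H$, commutativity of $\R$ gives
\[
\nu^g(h)=\nu(h^{g^{-1}})=\chi(g^{-1}hg)=\chi(h)=\nu(h),
\]
so $\nu$ lies in $\Hom(H,\R)^G$. This uses only that $\chi$ is a homomorphism into an abelian group and that $H$ is normal in $G$ (guaranteed by the semidirect decomposition).

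For the converse, given $\nu\in\Hom(H,\R)^G$, the plan is to define $\chi:G\to\R$ by $\chi(kh):=\nu(h)$ for $k\in K$ and $h\in H$; this is well defined because the semidirect product structure $G=K\ltimes H$ gives each element of $G$ a unique such factorization. The homomorphism check uses the multiplication rule $(k_1h_1)(k_2h_2)=k_1k_2\,h_1^{k_2}h_2$, together with the $G$-invariance (and in particular $K$-invariance) of $\nu$:
\[
\chi\bigl((k_1h_1)(k_2h_2)\bigr)=\nu(h_1^{k_2}h_2)=\nu(h_1^{k_2})+\nu(h_2)=\nu(h_1)+\nu(h_2).
\]
By construction $\chi\mid_H=\nu$. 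I would also point out implicitly that $\chi$ is the unique such extension, consistent with part (i).

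The two statements are essentially elementary, and I do not expect a genuine obstacle; the only mild subtlety is in the converse of (ii), where one must use the $K$-invariance of $\nu$ at exactly the right place to turn the twisted multiplication in the semidirect product into an additive relation in $\R$.
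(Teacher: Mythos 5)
Your proof is correct and follows essentially the same route as the paper's: part (i) via $g^n\in H$ and torsion-freeness of $\R$, and part (ii) via the explicit formula $\chi(kh):=\nu(h)$ made well-defined by the unique semidirect factorization. The only small differences are cosmetic — you verify the ``only if'' of (ii) by a direct conjugation computation in $\R$ (the paper instead notes $K\leq\ker\chi$), and you spell out the homomorphism check that the paper leaves implicit — neither of which changes the substance.
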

\begin{proof} For i), note that for any $g\in G$, there is some $n>0$ with $g^n\in H$. Then, if $\nu_1 |_H=\nu_2 |_H$,
$$n\nu_1(g)=\nu_1(g^n)=\nu_2(g^n)=n\nu_2(g).$$
And for ii) observe that $K\leq\text{Ker}(\chi)$ for any character $\chi:G\to\R$ thus $\chi |_H\in Hom(H, \R)^G$. Conversely, let $\nu\in Hom(H, \mathbb{R})^G$ and define $\chi:G\to\R$ by $\chi(kh):=\nu(h)$  for $k \in K, h \in H$. This is a well defined character of $G$ that extends $\nu$.
\end{proof}

\section{Examples}

We will see later on  that property (i) from Lemma \ref{pascoa} i.e.
\begin{equation}\label{property}
\text{
$|G: N_G(K) \text{Ker} (\chi)|<\infty$ for any character
$\chi:G\to\R$, for any finite $K\leq G$ }
\end{equation}
holds for all virtually soluble groups of type $FP_{\infty}$ and  finite extensions of the R. Thompson group $F$ (see Theorem \ref{finalThompson} and Theorem \ref{Bredonsoluble}). Note that this is equivalent to $|G: N_G(K) [G,G]|<\infty$.
We will discuss these examples in more details in the last two sections of the paper.

\begin{example} Using right angled Artin groups, it is not difficult to construct groups for which condition (\ref{property}) does not hold true.
Let $K=C_2$ and $L$ be the simplicial complex having four vertices labeled 1,2,3 and 4 and edges joining the vertices labeled $\{1,4\}$; $\{2,4\}$ and $\{3,4\}$. Let $A_L$ be the associated right angled Artin group, i.e., the group with presentation
$$\langle g_1,g_2,g_3,g_4\mid [g_1,g_4],[g_2,g_4],[g_3,g_4] \rangle$$
 and consider the action of $K$ on $L$ given by swapping the vertices labeled 1 and 2. This yields and action of $K$ on $A_L$ and we may form the semidirect product $G=K\ltimes A_L$. Let $L^K$ denote the fixed point subcomplex, i.e., the subcomplex of $L$ consisting of the vertices 3, 4 with an edge joining them.
Then using Theorem 2 in \cite{learynucinkis} (due to Crisp) we get $N_G(K)/K\cong C_{A_L}(K)=A_{L^K} = \langle g_3, g_4 | [g_3, g_4] \rangle$.

Now, consider the character $\chi:G\to\R$ such that $\chi(g_1)=1=\chi(g_2)$, $\chi(g_3)=\alpha$, $\chi(g_4)=\beta$ so that $\langle 1,\alpha,\beta\rangle$ is a rank 3 subgroup of $(\R , +)$. Then
$\chi(N_G(K)\text{Ker}(\chi)) = \chi(N_G(K)) = \langle \alpha, \beta \rangle$ and $\chi(G) = \langle 1, \alpha, \beta \rangle$, so
$|G:N_G(K)\text{Ker}\chi|$ is not finite.
\end{example}

\begin{example} Finally we consider an example where (\ref{property}) holds
but for some character $\chi$ we have $G \not= N_G(K)\text{Ker}(\chi)$.s
This is equivalent to $\chi(G) \not= \chi(N_G(K))$.

Consider $G=K\ltimes\langle b_0,b_1,b_2,b_3\rangle$
with $K=C_2$ generated by $t$ swapping $b_0,b_1$ and $b_2,b_3$, where  $\langle b_0,b_1,b_2,b_3\rangle \simeq \mathbb{Z}^4$.
Let $a_0=b_0b_1$, $a_1=b_2b_3$, $a_2=b_0b_1^{-1}$, $a_3=b_2b_3^{-1}$ and  note that $a_2, a_3 \in G'$.
Then  $N_G(K) = C_G(K) = K \times \langle a_0, a_1 \rangle$, hence $\chi(N_G(K)) = \langle \chi(a_0) = 2 \chi(b_0), \chi(a_1) = 2 \chi(b_2) \rangle \not= \langle
\chi(b_0), \chi(b_2)  \rangle = \chi (G)$.
\end{example}

\section{Bredon Sigma  theory} \label{sectionSigmaBredon}

\noindent
Let $$S(G):=Hom (G, \R) \setminus \{ 0 \} / \sim$$
where $\sim$ is the equivalence relation  given
by $\chi_1 \sim \chi_2$ if $\chi_1  \in \R_{\small{>0}}
\chi_2$.  Write $[\chi]$ for the equivalence class of $\chi$.

\begin{definition}
Let $A$ be an $\OFG$-module and $[\chi] \in
S(G)$. Then we say that
$[\chi]\in\underline\Sigma^m(G, A)$ if there is
a subgroup $\widetilde{G}$ of finite index in  $G$ that contains all finite subgroups of $G$, $G' \leq \widetilde{G}$,
$M=\widetilde G_\chi$ conjugates finite subgroups and
$A$ is $\UFP_m$ as $\OFM$-module. Observe that in this definition  $\widetilde{G}$ might depend on $\chi$.
\end{definition}

\begin{theorem} \label{condition} Let $G$ be a finitely generated group. Then
$[\chi] \in \underline{\Sigma}^m(G, \underline{\Z} )$ if and only if there is
a subgroup  $\widetilde G$ of finite index in $G$ with $G'\leq\widetilde G$  and a family $\{H_1, \ldots , H_s \}$ of finite subgroups of $\widetilde{G}$  such that for any finite subgroup $K$ of $ G$ we have $K\leq\widetilde G$ and the following three conditions hold :

1. $N_{\widetilde{G}}(K) ( Ker (\chi) \cap \widetilde{G}) = \widetilde{G}$;

2. $\chi(N_G(K)) \not= 0$ and $[\chi |_{N_G(K)}] \in \Sigma^m(N_G(K), \mathbb{Z})$;

3.  there is  an element $m \in \widetilde{G}_{\chi}$ such that $Km \subseteq mH_i$ for some $i =1,...,s$.
\end{theorem}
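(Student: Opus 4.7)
The plan is to unpack the definition of $[\chi]\in\underline\Sigma^m(G,\underline\Z)$ and match its two ingredients with the structural results of Sections 2--3. By definition one needs a finite index subgroup $\widetilde G$ of $G$ containing $G'$ and every finite subgroup of $G$, such that (a) $M=\widetilde G_\chi$ conjugates finite subgroups, and (b) $\underline\Z$ is $\UFP_m$ as an $\OFM$-module, equivalently $M$ itself is $\UFP_m$. Since every finite subgroup of $G$ lies in $\widetilde G\subseteq M$, the three ranges of quantification (finite $K$ in $G$, in $\widetilde G$, in $M$) coincide, and I would treat (a) and (b) separately.

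Applying Lemma \ref{conjfinite} with $(G,\chi)$ replaced by $(\widetilde G,\chi|_{\widetilde G})$ converts (a) into $\widetilde G=N_{\widetilde G}(K)(\ker\chi\cap\widetilde G)$ for every finite $K$, which is condition 1. This identity also forces $\chi|_{N_{\widetilde G}(K)}\neq 0$: otherwise the right-hand side would lie in $\ker\chi\cap\widetilde G$, whence $\chi|_{\widetilde G}=0$ and, by Lemma \ref{finext}(i), $\chi=0$. In particular $\chi(N_G(K))\neq 0$, giving the first half of condition 2.

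Having (a), Corollary \ref{FPmproperty} converts (b) into the conjunction of condition 3 (existence of $H_1,\ldots,H_s$ such that every finite $K\leq M$ admits $m\in M$ with $Km\subseteq mH_i$ for some $i$) with the assertion that the Weyl monoid $W_MK=N_{\widetilde G}(K)_\chi/K$ is of type $\FP_m$ for every finite $K\leq M$. To match this last assertion with the second half of condition 2, I would chain two equivalences. First, because $K$ is a finite normal subgroup of the cancellation monoid $N_{\widetilde G}(K)_\chi$, the coset decomposition $N_{\widetilde G}(K)_\chi=\dot\bigcup Kp_\alpha$ makes $\Z[N_{\widetilde G}(K)_\chi]$ a free left $\Z K$-module and identifies $\Z[W_MK]$ with $\Z\otimes_{\Z K}\Z[N_{\widetilde G}(K)_\chi]$; a standard change-of-rings argument (using that $\Z$ is $\FP_\infty$ over $\Z K$) then gives $W_MK$ of type $\FP_m$ iff $N_{\widetilde G}(K)_\chi$ is of type $\FP_m$, equivalently $[\chi|_{N_{\widetilde G}(K)}]\in\Sigma^m(N_{\widetilde G}(K),\Z)$. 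Second, since $N_{\widetilde G}(K)$ has finite index in $N_G(K)$ and the restricted character is still non-zero, the classical invariance of $\Sigma^m$ under finite-index extensions yields the equivalence with $[\chi|_{N_G(K)}]\in\Sigma^m(N_G(K),\Z)$.

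The main obstacle is the first equivalence above: the analogous assertion for groups follows easily from Lyndon--Hochschild--Serre, but in the monoid setting one must verify that $\Z\otimes_{\Z K}-$ is exact on the relevant resolutions. The required $\Z K$-freeness (valid because $M$ is a cancellation monoid in which $K$ is a normal finite subgroup, so left $K$-cosets are pairwise disjoint of size $|K|$) is the key input and supplies the usual change-of-rings comparison between the two $\FP_m$ statements. The second equivalence, while classical for groups, also requires a brief monoid check, namely that coset representatives of $N_{\widetilde G}(K)$ in $N_G(K)$ can be chosen to make $\Z[N_G(K)_\chi]$ a finitely generated module over $\Z[N_{\widetilde G}(K)_\chi]$; with that in hand the standard induction/restriction argument goes through.
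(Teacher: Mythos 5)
Your proof follows essentially the same route as the paper: you translate condition~1 via Lemma~\ref{conjfinite} into ``$\widetilde G_\chi$ conjugates finite subgroups'', then apply Corollary~\ref{FPmproperty} to split Bredon $\UFP_m$ into condition~3 plus the statement that each $W_MK$ is of type $\FP_m$, note that condition~1 forces $\chi$ to be nonzero on $N_{\widetilde G}(K)$, pass from $W_MK$ to $N_M(K)=N_{\widetilde G}(K)_\chi$, and finally invoke the finite-index invariance of $\Sigma^m$. The only step you develop more fully than the paper is the passage from $W_MK=N_M(K)/K$ being $\FP_m$ to $N_M(K)$ being $\FP_m$: the paper just asserts this, whereas you supply the $\Z K$-freeness / change-of-rings justification, which is the right argument and worth having explicitly. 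Your concern at the very end about a ``brief monoid check'' for the finite-index step is unnecessary: $N_{\widetilde G}(K)$ and $N_G(K)$ are groups (not merely monoids) and the equivalence $[\chi|_{N_{\widetilde G}(K)}]\in\Sigma^m(N_{\widetilde G}(K),\Z)\iff[\chi|_{N_G(K)}]\in\Sigma^m(N_G(K),\Z)$ is precisely the cited group-theoretic result \cite[Thm.~9.3]{MMV}, which the paper quotes directly without any further monoid-level verification.
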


\begin{remark} \label{centr01}  Since $C_G(K)$ has finite index in $N_G(K)$ and by \cite[Thm.~9.3]{MMV} (see Theorem \ref{finiteindex}) condition 2 is equivalent to condition

\medskip
2b). $\chi(C_G(K)) \not= 0$ and $[\chi |_{C_G(K)}] \in \Sigma^m(C_G(K), \mathbb{Z})$.
\end{remark}
\begin{proof} Note that by Lemma \ref{conjfinite} condition 1 is equivalent to $M= \widetilde G_\chi$ conjugates finite subgroups.
By Lemma \ref{fp0monoids} condition 3 is equivalent to $\underline{\Z}$ is $\UFP_0$ as $\OFM$-module.

By Corollary \ref{FPmproperty} $\underline{\Z}$ is $\UFP_m$ as $\OFM$-module if and only if condition 3 holds and $\mathbb{Z}$ is $FP_m$ as left $\mathbb{Z} W_MK$-module for every finite subgroup $K$. This is equivalent to \begin{equation} \label{eq01} \mathbb{Z} \hbox{ is of type } FP_m \hbox{ as left }\mathbb{Z} N_M(K)\hbox{-module}. \end{equation} Note that $N_M(K) = N_{\widetilde{G}}(K) \cap G_{\chi}$. By condition 1 we have that the restriction of $\chi$ on $N_{\widetilde{G}}(K)$ is non-zero, hence (\ref{eq01}) is equivalent to
\begin{equation} \label{pascoa2} [\chi |_{N_{\widetilde{G}}(K)}] \in \Sigma^m(N_{\widetilde{G}}(K), \mathbb{Z}).\end{equation} Since $N_{\widetilde{G}}(K)$ has finite index in $N_G(K)$ by \cite[Thm.~9.3]{MMV} (\ref{pascoa2}) is equivalent to
$[\chi] \in \Sigma^m(N_G(K), \mathbb{Z})$.
\end{proof}

\begin{lemma} \label{nonempty} Let $G$ be a finitely generated group. If $\underline{\Sigma}^m(G, \underline{\Z}) \not= \emptyset$ then $\underline{\Z}$ is Bredon $FP_m$.
\end{lemma}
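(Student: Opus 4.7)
The plan is to unpack the hypothesis using the characterization in Theorem \ref{condition} and then verify the two conditions of Lemma \ref{ordinary} directly for $G$. Recall that $\underline{\Z}$ being Bredon $FP_m$ is equivalent to $G$ being of type $\UFP_m$, so by Lemma \ref{ordinary} it suffices to show that $G$ has finitely many conjugacy classes of finite subgroups and that for every finite subgroup $K\leq G$, the Weyl group $W_GK=N_G(K)/K$ is of type $FP_m$.

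First I would pick $[\chi]\in\underline{\Sigma}^m(G,\underline{\Z})$ and apply Theorem \ref{condition}, obtaining a finite index subgroup $\widetilde G$ of $G$ with $G'\leq\widetilde G$ containing every finite subgroup, together with finitely many finite subgroups $H_1,\ldots,H_s\leq \widetilde G$ satisfying conditions 1, 2 and 3 of that theorem. The $\OFG$-finiteness properties we need will be extracted from conditions 2 and 3 respectively.

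For the Weyl groups, condition 2 of Theorem \ref{condition} gives $[\chi|_{N_G(K)}]\in \Sigma^m(N_G(K),\Z)$ for each finite $K\leq G$. By the very definition of the classical $\Sigma$-invariant together with the implication (\ref{remark}), this forces $N_G(K)$ to be of type $FP_m$; since $K$ is finite and normal in $N_G(K)$, the quotient $W_GK = N_G(K)/K$ inherits type $FP_m$.

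For the conjugacy classes, condition 3 supplies, for each finite subgroup $K\leq G$, some $m\in\widetilde G_\chi$ with $Km\subseteq mH_i$ for some $i\in\{1,\ldots,s\}$, i.e.\ $K^m\leq H_i$. Since the $H_i$ are finite and there are only finitely many of them, there are only finitely many subgroups of the $H_i$'s in total, and every finite subgroup of $G$ is $\widetilde G$-conjugate (hence $G$-conjugate) to one of them. Therefore $G$ has finitely many conjugacy classes of finite subgroups. Applying Lemma \ref{ordinary} finishes the proof.

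I do not expect any serious obstacle: the statement is essentially an unpacking of the definition combined with Theorem \ref{condition} and Lemma \ref{ordinary}. The only point that requires a line of justification is that $K^m$ is genuinely a \emph{subgroup} of $H_i$ and not merely a subset, which is immediate since conjugation is a group automorphism.
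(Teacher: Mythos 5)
Your proof is correct and follows essentially the same route as the paper's: both reduce via Lemma \ref{ordinary}, extract finitely many conjugacy classes from condition 3 of Theorem \ref{condition}, and deduce that each $N_G(K)$ (hence $W_GK$) is of type $FP_m$ from condition 2 together with the observation (\ref{remark}) that $N_G(K)_\chi$ of type $FP_m$ implies $N_G(K)$ of type $FP_m$. You merely spell out one step the paper leaves implicit, namely passing from $N_G(K)$ to $N_G(K)/K$.
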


\begin{proof}
By Lemma \ref{ordinary} we have to show that there are finitely many $G$-orbits under conjugation of finite subgroups in $G$ and $N_G(K)$ is of
type $FP_m$ for every finite subgroup $K$ of $G$.
Let $[\chi] \in \underline{\Sigma}^m(G, \underline{\Z} )$. Then condition 3 from Theorem \ref{condition} shows that any finite subgroup of $G$ is inside $H_i^g$ for some $g \in G_{- \chi}$ and $1 \leq i \leq s$, hence there are finitely many $G$-orbits of finite subgroups in $G$.

If $N_G(K)_{\chi}$ is of type $FP_m$ then $N_G(K)$ is of
type $FP_m$. Hence $\underline{\Z}$ is Bredon $FP_m$.
\end{proof}

\begin{remark}\label{subset} Let $G$ be a finitely generated group. Then $\underline{\Sigma}^m(G, \underline{\Z})\subseteq\Sigma^m(G,\Z)$. To see it, observe that we may assume $\underline{\Sigma}^m(G, \underline{\Z})\neq\emptyset$ and it suffices to consider the case when $K$ is the trivial group in part 2 from Theorem \ref{condition}.
\end{remark}

\begin{theorem} \label{condition2} Suppose that $G$ is a finitely generated group and has finitely many conjugacy classes of finite subgroups.
Then
$[\chi] \in \underline{\Sigma}^m(G, \underline{\Z} )$ if and only if  there is a subgroup  $\widetilde G$ of finite index in $G$ that contains the commutator subgroup $G'$ such that for every finite subgroup $K$ of $G$ we have
$K\leq\widetilde G$ and

1. $N_{\widetilde{G}}(K) ( ker (\chi) \cap \widetilde{G}) = \widetilde{G}$;

2. $  \chi(N_G(K)) \not= 0 \hbox{ and }[\chi |_{N_G(K)}] \in \Sigma^m(N_G(K), \mathbb{Z}).$
\end{theorem}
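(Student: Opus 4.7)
The plan is to reduce Theorem \ref{condition2} to Theorem \ref{condition} by showing that the extra hypothesis---that $G$ has finitely many conjugacy classes of finite subgroups---is exactly what is needed to supply the missing condition 3 of Theorem \ref{condition}. The ``only if'' direction is then immediate: if $[\chi]\in\underline\Sigma^m(G,\underline\Z)$, Theorem \ref{condition} yields a finite index subgroup $\widetilde G\geq G'$ together with a family $H_1,\ldots,H_s$ and three conditions; conditions 1 and 2 there are literally the same statements as conditions 1 and 2 of Theorem \ref{condition2}, so the claim follows.

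For the converse I start with $\widetilde G$ as given by the hypothesis and build the family $H_1,\ldots,H_s$. Since every finite subgroup of $G$ lies in $\widetilde G$ and $|G:\widetilde G|<\infty$, the assumption that $G$ has finitely many conjugacy classes of finite subgroups implies that $\widetilde G$ itself has only finitely many $\widetilde G$-conjugacy classes of finite subgroups (each $G$-class splits into at most $|G:\widetilde G|$ many $\widetilde G$-classes). I take $H_1,\ldots,H_s$ to be representatives of these $\widetilde G$-classes.

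To verify condition 3 of Theorem \ref{condition}, let $K\leq G$ be finite. Then $K\leq\widetilde G$ and there exist $i$ and $\widetilde g\in\widetilde G$ with $\widetilde g^{-1}K\widetilde g=H_i$. The step I expect to be the main (though modest) technical obstacle is seeing that condition 1 of Theorem \ref{condition2} provides precisely the algebraic device needed to push $\widetilde g$ into $\widetilde G_\chi$: using $\widetilde G=N_{\widetilde G}(K)(\ker\chi\cap\widetilde G)$, I write $\widetilde g=n_0k_0$ with $n_0\in N_{\widetilde G}(K)$ and $k_0\in\ker\chi\cap\widetilde G$. Since $n_0$ normalises $K$, $k_0^{-1}Kk_0=\widetilde g^{-1}K\widetilde g=H_i$, while $\chi(k_0)=0$ gives $k_0\in\widetilde G_\chi$. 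Setting $m:=k_0$ we obtain $Km=mH_i$, so condition 3 of Theorem \ref{condition} is satisfied and the theorem gives $[\chi]\in\underline\Sigma^m(G,\underline\Z)$.
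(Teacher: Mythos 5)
Your proof is correct and follows essentially the same route as the paper: reduce to Theorem \ref{condition} by showing condition 1 supplies the missing condition 3. The only cosmetic difference is that you apply the decomposition $\widetilde G = N_{\widetilde G}(\cdot)(\ker\chi\cap\widetilde G)$ to $K$ and factor the conjugating element $\widetilde g = n_0k_0$, whereas the paper applies it to the representative $H_i$ and factors $t=xy$ with $x\in N_{\widetilde G}(H_i)$; since $K$ and $H_i$ are conjugate, these are interchangeable.
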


\begin{remark} As before condition 2 can be substituted with condition 2b) from  Remark \ref{centr01}.  By Lemma \ref{nonempty} if $\underline{\Sigma}^m(G, \underline{\Z}) \not= \emptyset$ then $G$ has finitely many conjugacy classes of finite subgroups.
\end{remark}

\begin{proof} Assume that there is $\widetilde{G}$ such that conditions 1 and 2 hold.
Since $G$ has finitely many conjugacy classes of finite subgroups, the same holds for $\widetilde{G}$.
Let $H_1, \ldots, H_s$ be representatives of the finitely many conjugacy classes of finite subgroups in $\widetilde{G}$.
 We claim that  condition 1 from  Theorem \ref{condition} implies condition 3 from Theorem \ref{condition}. Indeed condition 3 is equivalent to every
 finite subgroup $K$ of $G$ being a subgroup of $H_i^g$ for some $g \in \widetilde{G}_{- \chi}$ and $ 1 \leq i \leq s$.  Note that $K \leq H_i^t$ for some $t \in \widetilde{G} = N_{\widetilde{G}}(H_i) (ker (\chi) \cap \widetilde{G})$, so $t = xy$, $x \in  N_{\widetilde{G}}(H_i)$, $y \in ker (\chi) \cap \widetilde{G} \subseteq \widetilde{G}_{- \chi}$ and hence $H_i^t = H_i ^{xy} = H_i^y$.
\end{proof}

The following is a Bredon version of \cite[Thm.~B]{BieriRenz}.
 Recall that $S(G,H)$ consists of the classes of all those characters vanishing on $H$.
\begin{theorem}\label{BieriRenz}
Let $H$ be a subgroup of  a finitely generated group $G$ that contains the commutator subgroup $G'$ and such that $G/ H$ is torsion-free and non-trivial. Assume that  $\underline{\Z}$ is Bredon $FP_m$ as $\OFG$-module.
Then $\underline{\Z}$ is Bredon $FP_m$ as $\OFH$-module if and only if
 $
S(G, H) \subseteq  \underline{\Sigma}^m(G, \underline{\Z}).$
\end{theorem}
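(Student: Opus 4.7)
The plan is to reduce the theorem, via Theorem \ref{condition2}, to a family of classical $\Sigma^m$-assertions for the normalizers $N_G(K)$ of finite subgroups, and then to apply the classical Bieri--Renz theorem to each pair $(N_G(K), N_H(K))$. Several structural facts are in force throughout: the hypothesis $G'\leq H$ makes $H$ normal in $G$, so $G/H$ is a finitely generated torsion-free nontrivial abelian group, hence isomorphic to $\Z^n$ for some $n\geq 1$; since $G/H$ is torsion-free, every finite subgroup of $G$ lies in $H$; and $N_G(K)'\leq G'\leq H$ forces $N_G(K)'\leq N_H(K)$, so classical Bieri--Renz applies to the pair $(N_G(K), N_H(K))$ whenever $N_G(K)$ is of type $FP_m$, a condition guaranteed by Lemma \ref{ordinary} applied to $G$. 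Moreover, abelianity of $G/H$ implies $N_G(K)H$ is normal in $G$ and depends only on the $G$-conjugacy class of $K$, and an elementary coset count shows that two finite subgroups $K^{g_1}$, $K^{g_2}$ in one $G$-conjugacy class are $H$-conjugate exactly when $g_1, g_2$ lie in the same coset of $N_G(K)H$, so the number of $H$-classes inside the $G$-class of $K$ equals $[G:N_G(K)H]$.

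For the forward direction, assume $\underline{\Z}$ is $\OFH$-Bredon $FP_m$. By the coset count above, $[G:N_G(K)H]<\infty$ for every finite $K$. Pick representatives $K_1,\ldots,K_r$ of the $G$-conjugacy classes and set $\widetilde G := \bigcap_{j=1}^r N_G(K_j)H$; this contains $H$ (hence $G'$) and has finite index in $G$, and the remark above gives $\widetilde G\leq N_G(K)H$ for every finite $K\leq G$. Given a class $[\chi]\in S(G,H)$ and $g\in\widetilde G$, decompose $g=nh$ with $n\in N_G(K)$ and $h\in H\leq\widetilde G$; then $n=gh^{-1}\in\widetilde G\cap N_G(K)=N_{\widetilde G}(K)$, and since $H\leq\ker\chi$ this verifies condition 1 of Theorem \ref{condition2}. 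For condition 2, classical Bieri--Renz applied to $(N_G(K), N_H(K))$ yields $S(N_G(K),N_H(K))\subseteq \Sigma^m(N_G(K),\Z)$, using that both $N_G(K)$ and $N_H(K)$ are of type $FP_m$ (the latter via Lemma \ref{ordinary} applied to $H$). The restriction $\chi|_{N_G(K)}$ vanishes on $N_H(K)$ and is nonzero since $\chi(N_G(K))=\chi(N_G(K)H)$ has finite index in $\chi(G)\neq 0$, hence $[\chi|_{N_G(K)}]\in \Sigma^m(N_G(K),\Z)$.

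For the reverse direction, assume $S(G,H)\subseteq\underline{\Sigma}^m(G,\underline{\Z})$. The first step is to show $[G:N_G(K)H]<\infty$ for every finite $K$: otherwise $N_G(K)H/H$ would have infinite index in the torsion-free abelian $G/H$, so one could pick a nonzero $\overline{\chi}: G/H\to\R$ vanishing on $N_G(K)H/H$, and its pullback would give $[\chi]\in S(G,H)\subseteq\underline{\Sigma}^m(G,\underline{\Z})$ with $\chi(N_G(K))=0$, contradicting condition 2 of Theorem \ref{condition2}. Combined with the $\OFG$-Bredon $FP_m$ hypothesis, this gives $H$ finitely many conjugacy classes of finite subgroups. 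To conclude that $N_H(K)$ is $FP_m$ for every finite $K$, apply classical Bieri--Renz to $(N_G(K), N_H(K))$ in the reverse direction; it suffices to show $S(N_G(K),N_H(K))\subseteq \Sigma^m(N_G(K),\Z)$. A representative $\nu$ of such a class vanishes on $N_H(K)=N_G(K)\cap H$ and so factors through $N_G(K)H/H\leq G/H$; injectivity of $\R$ in the category of abelian groups extends this factor to $\overline\chi: G/H\to\R$, and the resulting pullback $\chi: G\to\R$ lies in $S(G,H)\subseteq \underline{\Sigma}^m(G,\underline{\Z})$ with $\chi|_{N_G(K)}=\nu$. Theorem \ref{condition2} then delivers $[\nu]\in\Sigma^m(N_G(K),\Z)$, and Lemma \ref{ordinary} applied to $H$ completes the argument.

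The most delicate point to manage is the simultaneous choice of $\widetilde G$ in the forward direction: a single finite-index subgroup has to satisfy condition 1 of Theorem \ref{condition2} for every finite subgroup $K$ at once. This is possible only because the normality of $H$ in $G$ forces $N_G(K)H$ to depend solely on the $G$-conjugacy class of $K$, reducing an a priori infinite coherence problem to a finite intersection over class representatives.
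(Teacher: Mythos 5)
Your proof is correct and follows essentially the same route as the paper: reduce to Theorem \ref{condition2} via Lemma \ref{ordinary} and apply the classical Bieri--Renz theorem to each pair $(N_G(K),N_H(K))$, with $\widetilde G=\bigcap_j N_G(K_j)H$ serving as the uniform finite-index subgroup. The only cosmetic differences are that you verify condition 1 by a direct coset decomposition rather than quoting Lemma \ref{conjfinite1}, and in the reverse direction you derive $[G:N_G(K)H]<\infty$ by contradiction from condition 2 rather than by the paper's device of choosing a single $\chi$ with $\ker\chi=H$.
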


\begin{proof}
1. Suppose that $\underline{\Z}$ is Bredon $FP_m$ as $\OFH$-module. Then
 by Lemma \ref{ordinary}

a1) there are finitely many conjugacy classes of finite subgroups in $H$;

b1) for any finite subgroup $K$ of $H$ the group $N_H(K)$ is of type $FP_m$.

 Since $G,H$ are $\underline{\FP}_m$ there are finitely many conjugacy classes of finite subgroups in both $G,H$ and every $G$-orbit (of finite groups under conjugation) splits into finitely many $H$-orbits, hence for every finite subgroup $K$ of $G$
\begin{equation} \label{tildaG}
N_G(K) H \hbox{ has finite index in } G.
\end{equation}
Define
\begin{equation}
\widetilde{G} = \cap N_G(K) H,
\end{equation}
where the intersection is over representatives of the $G$-orbits of conjugacy classes of finite subgroups in $G$. Thus the intersection is finite, $\widetilde{G}$ has finite index in $G$ and contains $H$, hence $\widetilde{G}$ contains all the finite subgroups of $G$ and the commutator subgroup $G'$.

Let $[\chi] \in S(G, H)$ i.e. $H \subseteq \text{Ker}(\chi)$.
By Lemma \ref{conjfinite1}  applied for $N = H$, $\widetilde{G}_{\chi}$ conjugates finite subgroups.
Then
 condition 1 from Theorem \ref{condition2} holds and $\widetilde{G}$ is global i.e. $\widetilde{G}$ does not depend on $\chi$.

Note that $N_H(K) = N_{G}(K) \cap H$, so $N_G(K) / N_H(K)$ is abelian. By condition b1) and
\cite[Thm.~B]{BieriRenz} we have that every non-zero real character $\widetilde{\chi} : N_G(K) \to \mathbb{R}$ such that $\widetilde{\chi}(N_H(K)) = 0$ represents an element of $\Sigma^m(N_G(K), \mathbb{Z})$. In particular this holds for $\widetilde{\chi}$ the restriction of $\chi$ to $N_G(K)$, hence condition 2 from  Theorem \ref{condition2} holds. Note that since $\chi(H) = 0$ and $[G : N_G(K)H] < \infty$ we have $\chi(N_G(K)) \not= 0$. Then by  Theorem \ref{condition2}
$[\chi] \in \underline{\Sigma}^m(G, \underline{\Z})
$

2. Conversely suppose that
$$
S(G, H) \subseteq  \underline{\Sigma}^m(G, \underline{\Z}).$$
 Then by Theorem \ref{condition}  for every $[\chi] \in S(G, H)$ :

a2) there is a subgroup $\widetilde G$ of finite index in $G$ that contains the commutator $G'$ and
contains every finite subgroup $K$ of $G$  and $N_{\widetilde{G}}(K) ( Ker (\chi) \cap \widetilde{G}) = \widetilde{G}$;

b2) for any finite subgroup $K$ of $G$ we have $\chi(N_G(K)) \not= 0$  and $[\chi |_{N_G(K)}] \in \Sigma^m(N_G(K), \mathbb{Z})$. In particular $N_G(K)$ is of type $FP_m$.

Since $\underline{\Sigma}^m(G, \underline{\Z}) \not= \emptyset$, by Lemma \ref{nonempty}
 $G$ has finitely many conjugacy classes of finite subgroups. By a2) $N_G(K)\text{Ker}(\chi)$ has finite index in $G$, hence  every $G$-orbit  of finite subgroups ( i.e. $\{ K^g \}_{g \in G}$) splits into finitely many $\text{Ker}(\chi)$-orbits. Choose $\chi$ such that $H =\text{Ker}(\chi)$. Then every $G$-orbit of finite subgroups  splits into finitely many $H$-orbits, so $H$ has finitely many conjugacy classes of finite subgroups i.e. condition a1) holds.

It remains to show that condition b1) holds. Fix one finite subgroup $K$ of $G$. By b2) $N_G(K)$ is of type $FP_m$. Recall that $N_G(K) / N_H(K)$ is abelian.
Let $\mu : N_G(K) \to \mathbb{R}$ be a non-zero real
character such that $\mu(N_H(K)) = 0$. Then $\mu$ can be
extended to a real character $\mu_1$ of $N_G(K) H$ that is zero on
$H$. Since $G' \subseteq H$ we see that $\mu_1$ is
extendable to a real character $\chi$ of $G$. Thus by condition b2)
$N_G(K)_{\chi} = N_G(K)_{\mu}$ is of type $FP_{m}$. Then by the original Bieri-Renz
criterion \cite[Thm. B]{BieriRenz}  $N_H(K)$ is of type $FP_m$.
\end{proof}

\begin{example}
Let  $G$ be  a polycyclic group. Then $\underline{\Z}$ is
Bredon $FP_{\infty}$ as $\OFG$-module.
Let $H$ be the subgroup of $G$ that contains the
commutator and such that $H/G'$ is the torsion part of $G/G'$. Since $H$ is polycyclic,  $\underline{\Z}$ is  Bredon $FP_{\infty}$ as $\OFH$-module. So by the previous theorem
$$
\underline{\Sigma}^m(G, \underline{\Z}) = S(G, H) =  S(G).
$$
\end{example}

\begin{example}  Leary-Nucinkis have constructed an example of a group which is of ordinary type $\FP_\infty$ but not Bredon $\FP_\infty$ (see \cite{learynucinkis}). Their example is obtained as a finite index extension of a Bestvina-Brady group $B_L$, where $L$ is certain flag complex on which the alternating group of degree 5, $K=A_5$ acts. Consider the associated right angled Artin group $A_L$, then $K$ also acts on $L$ and we may form $G=K\ltimes A_L$. The map $\chi:A_L\to\Z$ which sends all the generators in the standard right-angled Artin presentation of $A_L$ to the identity is a (discrete) character of $G$ which by Lemma \ref{finext} can be lifted to a character $\chi_0$ of $G$, then $\text{Ker} (\chi) =B_L$ and $\text{Ker}(\chi_0) =K\ltimes B_L$ is Leary-Nucinkis' example.
Then Theorem \ref{BieriRenz} implies that $[\chi_0]\not\in\underline\Sigma^\infty(G,\underline{\Z})$, however $[\chi] \in\Sigma^\infty(G,\Z)$ by the version of this result for ordinary characters, see \cite[Thm.~B]{BieriRenz}. Note that by \cite{learynucinkis}, $L^K=\emptyset$ which implies that $K=N_G(K)$, so here  both conditions from Theorem \ref{condition2} fail.
\end{example}

\begin{theorem} \label{open} Suppose that $\underline{\Sigma}^m(G, \underline{\Z}) \not= \emptyset$. Then $\underline{\Sigma}^m(G, \underline{\Z})$ is open in $S(G)$ if and only if $N_G(K) G'$ has finite index in $G$ for every finite subgroup $K$.
\end{theorem}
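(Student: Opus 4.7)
Throughout I work with Theorem~\ref{condition2}, which applies since $\underline{\Sigma}^m(G,\underline{\Z})\neq\emptyset$ implies, by Lemma~\ref{nonempty}, that $G$ has finitely many conjugacy classes of finite subgroups; fix representatives $K_1,\dots,K_r$.  Because $G/G'$ is abelian, $N_G(K)G'$ depends only on the conjugacy class of the finite subgroup $K$, so the hypothesis ``$N_G(K)G'$ has finite index in $G$ for every finite $K$'' need only be verified on the $K_i$.

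For the $(\Leftarrow)$ direction I put $\widetilde G:=\bigcap_{i=1}^{r}N_G(K_i)G'$, a finite-index subgroup of $G$ containing $G'$ and hence every finite subgroup of $G$.  For any finite $K\leq G$ the containment $\widetilde G\subseteq N_G(K)G'$ combined with $G'\subseteq\widetilde G$ yields $\widetilde G=N_{\widetilde G}(K)G'$; since $G'\subseteq\ker\chi$ for every real character $\chi$, this makes condition~1 of Theorem~\ref{condition2} hold for this fixed $\widetilde G$ uniformly in $\chi$.  The same finite-index hypothesis forces $\chi(N_G(K))\neq 0$ for every nonzero $\chi$.  It remains only to ensure $[\chi|_{N_G(K_i)}]\in\Sigma^m(N_G(K_i),\Z)$ for each $i$: by the openness of the classical Bieri--Renz invariant I take open neighborhoods $U_i\subseteq\Sigma^m(N_G(K_i),\Z)$ of $[\chi_0|_{N_G(K_i)}]$, pull them back through the continuous restriction maps $S(G)\to S(N_G(K_i))$ (defined at every $[\chi]$ of interest by the previous step), and intersect over $i$.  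This produces an open neighborhood of $[\chi_0]$ inside $\underline\Sigma^m(G,\underline\Z)$.

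For the converse suppose that $B:=N_G(K)G'/G'$ has infinite index in $A:=G/G'$ for some finite $K$.  The key is to extract from condition~1 of Theorem~\ref{condition2} a single arithmetic necessary condition on $\chi$: if $[\chi]\in\underline\Sigma^m(G,\underline\Z)$ with witness $\widetilde G$, applying $\chi$ to the equality $\widetilde G=N_{\widetilde G}(K)(\ker\chi\cap\widetilde G)$ gives $\chi(\widetilde G)=\chi(N_{\widetilde G}(K))$, and combined with the finite indices $[G:\widetilde G]<\infty$ and $[N_G(K):N_{\widetilde G}(K)]<\infty$ this forces $\chi(N_G(K))$ to have finite index in $\chi(G)$.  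Let $S_K$ denote the set of $\chi\in V:=\operatorname{Hom}(G,\R)$ for which this holds.  Since $A/B$ is finitely generated abelian of torsion-free rank $r\geq 1$, pick lifts $a_1,\dots,a_r\in A$ of free generators of $A/B$ modulo torsion together with generators $b_1,\dots,b_s$ of $B$; membership of a nonzero $\chi$ in $S_K$ then requires $\chi(a_i)\in\Q\chi(b_1)+\cdots+\Q\chi(b_s)$ for each $i$.  For each rational tuple $(q_j)\in\Q^s$ the equation $\chi(a_i)=\sum_j q_j\chi(b_j)$ cuts out a proper hyperplane of $V$, because $a_i-\sum_j q_j b_j$ has nonzero image in $(A/B)\otimes\R$.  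Hence $S_K$ lies in a countable union of proper hyperplanes of $V$ and therefore has empty interior in $V$ by the Baire category theorem.  Since the preimage of $\underline\Sigma^m(G,\underline\Z)$ in $V\setminus\{0\}$ is nonempty, $\R_{>0}$-invariant, and contained in $S_K$, it cannot be open, and neither is $\underline\Sigma^m(G,\underline\Z)$ in $S(G)$.  The main technical hurdle is precisely this Baire step: condition~2 of Theorem~\ref{condition2} is stable under small perturbations thanks to the openness of the classical $\Sigma^m$, so the failure of openness must come from condition~1, and the whole argument hinges on converting it into the arithmetic requirement that $\chi(N_G(K))$ have finite index in $\chi(G)$ and recognising the resulting set as a countable union of proper hyperplanes exactly when $N_G(K)G'$ has infinite index in $G$.
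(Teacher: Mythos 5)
Your $(\Leftarrow)$ direction has a genuine gap. You set $\widetilde G:=\bigcap_i N_G(K_i)G'$ and justify the requirement $K\leq\widetilde G$ for every finite $K$ by asserting that containing $G'$ implies containing every finite subgroup of $G$. That implication is false: a subgroup of $G$ containing $G'$ corresponds to an arbitrary subgroup of $G/G'$, and finite subgroups of $G$ project to torsion elements of $G/G'$ that need not lie in a given finite-index subgroup of $G/G'$ (e.g.\ $C_2\leq C_4\times\Z$ with the subgroup $2C_4\times\Z$). Since Theorem~\ref{condition2} explicitly requires $K\leq\widetilde G$ for every finite $K$, this step is unjustified. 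The paper's choice $\widetilde G:=\bigcap_i N_G(K_i)N$, where $N/G'$ is the torsion subgroup of $G/G'$, avoids the problem: $N\subseteq N_G(K_i)N$ for each $i$, so $N\subseteq\widetilde G$, and $N$ does contain every finite subgroup of $G$; moreover finite index of $N_G(K_i)G'$ still gives finite index of $N_G(K_i)N\supseteq N_G(K_i)G'$. With that correction, the rest of your $(\Leftarrow)$ argument (uniform $\widetilde G$ makes condition~1 hold for all $\chi$, forces $\chi(N_G(K))\neq 0$, and openness of each classical $\Sigma^m(N_G(K_i),\Z)$ pulled back along the everywhere-defined restriction maps $S(G)\to S(N_G(K_i))$ gives openness) runs essentially as in the paper.

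Your $(\Rightarrow)$ direction is a genuinely different and, as far as I can see, correct route. The paper uses openness to locate, inside any neighbourhood of a point of $\underline\Sigma^m(G,\underline\Z)$, a character $\chi_0$ with $\ker\chi_0=N$; condition~1 for $\chi_0$ then directly yields $[G:N_G(K)N]<\infty$, hence $[G:N_G(K)G']<\infty$. You instead argue by contraposition: if some $N_G(K)G'$ has infinite index, you extract from condition~1 the arithmetic necessary condition that $\chi(N_G(K))$ have finite index in $\chi(G)$, show that the set of such $\chi$ sits inside a countable union of proper hyperplanes of $\Hom(G,\R)$, and invoke Baire to conclude that this set (hence the preimage of $\underline\Sigma^m(G,\underline\Z)$) has empty interior. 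Both approaches exploit the same genericity of characters, but yours is more explicit about why the failure of openness lives entirely in condition~1 of Theorem~\ref{condition2}, at the cost of a slightly longer argument.
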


\begin{remark} It will follow from the results in the last two sections of this paper  that the condition $[G : N_G(K) G'] < \infty$ holds for virtually soluble groups of type $FP_{\infty}$ and for finite extensions of the Thompson group $F$. \end{remark}

\begin{proof} Let  $N / G'$ be the torsion part of the abelianization $G / G'$.
By Lemma \ref{nonempty} $G$ has finitely many conjugacy classes of finite subgroups.

Suppose first that $\underline{\Sigma}^m(G, \underline{\Z})$ is open in $S(G)$ and let $[\chi] \in \underline{\Sigma}^m(G, \underline{\Z})$. Then there is $[\chi_0] \in \underline{\Sigma}^m(G, \underline{\Z})$ that is "close"' to $[\chi]$ and such that $\text{Ker}(\chi_0) = N$. Then condition 1 from Theorem \ref{condition2} implies that there is a subgroup of finite index $\widetilde{G}$ in $G$ such that $N_{\widetilde{G}}(K) (\text{Ker}(\chi_0) \cap \widetilde{G}) = \widetilde{G}$, hence $N_G(K) N$ has finite index in $G$ and since $N/G'$ is finite $N_G(K) G'$ has finite index in $G$.

For the converse assume that  $N_G(K) G'$ has finite index in $G$ for every finite subgroup $K$. Then $[G : N_G(K) N] < \infty$.  Let $\widetilde{G} = \cap_K N_G(K) N$ where the intersection is over representatives of conjugacy classes of finite subgroups in $G$, thus the intersection is finite and $\widetilde{G}$ has finite index in $G$. Note that $\widetilde{G} = N_{\widetilde{G}}(K) N$. Then by Lemma \ref{conjfinite} and Lemma \ref{conjfinite1}  condition 1 from Theorem \ref{condition2} holds for every character $\chi$ and furthermore $\chi (N_G(K)) \not= 0$.
Observe that since $N_G(K) N$ has finite index in $G$  we have $S(G) \subseteq S(N_G(K) N)$ and $S(G) = S(N_G(K), N \cap N_G(K)) \subseteq S(N_G(K))$.
Since $\Sigma^m(N_G(K), \mathbb{Z})$ is an open subset of $S(N_G(K))$ we deduce that condition 2 from Theorem \ref{condition2} is an open condition for a fixed finite subgroup $K$ i.e. if it holds for one character holds for a neighbourhood. The fact that there are finitely many conjugacy classes $K$ of finite subgroups in $G$  completes the proof.
\end{proof}

\section{Finite extensions of the Thompson group}

\noindent In this section we use  the notation of \cite{BrinGuzman}, \cite{KMN1}. So we denote by  $F_{n,
\infty}$
  the group of PL increasing homeomorphisms $f$ of $\BR$ acting on the
right
such that the set $X_f$ of break points of $f$ is a
discrete subset of $\BZ[{1 \over n}]$,
$f(X_f) \subseteq
\BZ[\frac{1}{n}]$ and slopes are integral powers of $n$. Furthermore,
there
are integers $i$ and
$j$
(depending on $f$) with
$$(x)f = \begin{cases} x + i (n-1)  \mbox{ for } x > M,  \\
        x + j (n-1) \mbox{ for }  x < - M \end{cases}$$
  for sufficiently large $M$ (depending on $f$ again).

\begin{definition} Let $n \geq 2$ and let $t_0\in\R$. Set
$$
F_{n,[t_0,\infty]} = \{ f \mid  f \hbox{ is the restriction to }[t_0,\infty]  \hbox{ of }
\tilde{f} \in F_{n, \infty}, \tilde{f}(t_0) =t_0\},$$
$$ F_{n, [-\infty, t_0]}= \{ f \mid  f \hbox{ is the restriction to }[-\infty,t_0]  \hbox{ of }
\tilde{f} \in F_{n, \infty}, \tilde{f}(t_0) =t_0\}.$$
Let $$\mu_1,\mu_2 :  F_{n, [t_0, \infty]} \to \mathbb{R} $$ be the characters given by $\mu_1(f) = log_n ((t_0)f')$ and $\mu_2(f)
= - i$ if
 $(x)f = x + i (n-1)$ for $x>>0$ and let
$$\nu_1,\nu_2 :  F_{n, [-\infty, t_0]} \to \mathbb{R} $$ be the characters given by  $\nu_1(f) = log_n ((t_0)f')$, $\nu_2(f) =  - j$ where  $(x)f = x + j (n-1)$ for $x<<0$.\end{definition}

The following lemma is a correction of  Lemma 4.7 from \cite{KMN1}. Note that in \cite{KMN1} though not explicitly stated the $\Sigma$-invariants are defined via right actions as both Proposition 3.3 and Theorem 3.4 there work only in this case.

However in this paper all modules are left ones, so we stick to $\Sigma$-invariants defined via left actions. Generally moving to a definition of the $\Sigma$-invariants from right actions to left actions changes  only the sign of the invariant i.e. we get the antipodal set.

\begin{lemma} \label{correction}
$\Sigma^1(F_{n,[t_0,\infty]})^c=\{ -[\mu_1], -[\mu_2]\} \hbox{ and }\Sigma^1(F_{n,[-\infty,t_0]})^c=\{ -[\nu_1],[\nu_2]\}.$
\end{lemma}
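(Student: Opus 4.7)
The plan is to deduce this from \cite[Lemma~4.7]{KMN1} by tracking carefully what changes when one moves from the right-module convention used in \cite{KMN1} to the left-module convention adopted here. The bridge is the general principle already highlighted at the start of Section~\ref{first}: the inverse map $g\mapsto g^{-1}$ is a ring anti-automorphism of $\Z G$ that swaps left and right module structures, and it sends the submonoid $G_\chi=\{g\mid\chi(g)\geq 0\}$ to $G_{-\chi}$ (since $\chi(g^{-1})=-\chi(g)$). Hence $\Z$ is of type $\FP_m$ as a left $\Z G_\chi$-module if and only if $\Z$ is of type $\FP_m$ as a right $\Z G_{-\chi}$-module. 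Writing $\Sigma^1_{\mathrm{right}}$ for the right-action version of the invariant, this gives
\[
[\chi]\in\Sigma^1(G,\Z)\iff[-\chi]\in\Sigma^1_{\mathrm{right}}(G,\Z),
\]
so the two invariants differ by the antipodal involution on $S(G)$; the same is true for their complements.

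I would therefore first record the antipodal-flip principle in the form above, noting that it applies verbatim to $G=F_{n,[t_0,\infty]}$ and $G=F_{n,[-\infty,t_0]}$. Then I would quote the right-action version of \cite[Lemma~4.7]{KMN1}, which computed $\Sigma^1_{\mathrm{right}}(F_{n,[t_0,\infty]})^c=\{[\mu_1],[\mu_2]\}$ and $\Sigma^1_{\mathrm{right}}(F_{n,[-\infty,t_0]})^c=\{[\nu_1],-[\nu_2]\}$. Applying the antipodal map yields precisely the sets claimed in the statement; the apparent sign asymmetry between $-[\mu_2]$ and $[\nu_2]$ is exactly the sign asymmetry already built into the definitions of $\mu_2$ and $\nu_2$ (which measure translations at the two opposite ends $+\infty$ and $-\infty$ of the real line). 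Thus no new geometric input is needed and the corrected lemma follows as a mechanical translation.

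The main obstacle, and the only subtle point, lies in being sure that the right-action computation in \cite{KMN1} is quoted correctly, because three independent sign conventions (left versus right actions, $\chi$ versus $-\chi$ in the definition of $G_\chi$, and the orientation choices for $\mu_i$ and $\nu_i$) all interact. If one wanted to bypass \cite{KMN1} altogether and verify the lemma directly, one would replicate the argument there: for each of the listed characters $\chi$ one exhibits a chain of PL-homeomorphisms in $G_{-\chi}$ that cannot lie in any finitely generated submonoid --- concentrating support near $t_0$ for the slope characters $\mu_1,\nu_1$, and pushing support to the infinite end for the translation characters $\mu_2,\nu_2$ --- and, conversely, for every other $[\chi]$ one produces a finite generating set of $G_\chi$ via the tree/replacement calculus of \cite{BrinGuzman}. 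The sign-flip shortcut avoids redoing this computation.
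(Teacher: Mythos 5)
Your antipodal-flip principle is stated correctly, and for $F_{n,[t_0,\infty]}$ your argument matches the paper's: Lemma~4.7 of \cite{KMN1} gives $\{[\mu_1],[\mu_2]\}$ in the right-action convention, and the sign change yields $\{-[\mu_1],-[\mu_2]\}$.

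However, there is a genuine gap in your treatment of $F_{n,[-\infty,t_0]}$. You write that the right-action computation in \cite{KMN1} gave $\{[\nu_1],-[\nu_2]\}$, and then apply the sign flip to conclude. But \cite{KMN1} (Lemma~4.9 there) actually states $\{[\nu_1],[\nu_2]\}$ --- which is \emph{wrong}, and the whole reason the present lemma is labelled a ``correction'' is to fix exactly this error. You have inadvertently quoted the corrected version of \cite{KMN1}'s result without producing the argument that establishes the correction. In other words, the antipodal flip is not enough: if one applied it to what \cite{KMN1} actually says, one would obtain $\{-[\nu_1],-[\nu_2]\}$, which is not the statement being proved. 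The missing content is a re-derivation of the right-action answer for $F_{n,[-\infty,t_0]}$. The paper does this by observing that the Brin--Guzm\'an conjugating map $\mu\colon[0,\infty)\to[0,n-1]$ must be composed with $x\mapsto -x$ to handle the left half-line (producing $\mu^-\colon(-\infty,0]\to[0,n-1]$, with the normalization $-n+1<t_0\leq 0$ replacing $0\leq t_0<n-1$); the orientation reversal this introduces is precisely what changes the sign on $[\nu_2]$ relative to what was asserted in \cite{KMN1}. You gesture at the possibility of redoing the computation at the end of your proposal, but then dismiss it as unnecessary (``the sign-flip shortcut avoids redoing this computation''); for the second group it is in fact necessary, because there is no correct source statement to flip.
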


\begin{proof}
As shown in Lemma 4.7 in \cite{KMN1}, the map $\mu:[0,\infty)\to[0,n-1]$ of \cite{BrinGuzman} Lemma 2.3.1 induces by conjugation an isomorphism from $F_{n,[t_0,\infty]}$ to a subgroup of the group of piecewiselinear homeomorphisms of the interval $[t_0,n-1]$. Note that we can assume that $t_0$ is an element of any fixed interval $[s, s+ n-1)$ or $(s, s + n-1]$ since for $\delta : x \to x+ n-1$ we have $ \delta F_{n, [t_0, \infty]} \delta^{-1} = F_{n, [t_0- (n-1), \infty]}$. In particular we assume that $t_0 \in [0, n-1)$.
The map $\mu$ was used in Lemma 4.7 from \cite{KMN1} together with the description of $\Sigma^1$ for groups of PL automorphisms of closed intervals \cite{BNS}  to calculate $\Sigma^1(F_{n,[t_0,\infty]})^c$. Adding the extra  minus signs explained  in the paragraph before Lemma \ref{correction}  we get that   $\Sigma^1(F_{n,[t_0,\infty]})^c=\{ - [\mu_1], - [\mu_2]\}$.

Consider now the group $F_{n,[-\infty,t_0]}$. If we want to do the same as for the group $F_{n,[t_0,\infty]}$, we first have to modify $\mu$ by composing it with the map $x\mapsto-x$ so that we get $\mu^-:(-\infty,0]\to[0,n-1]$. Here we need to assume $-n+1 < t_0\leq 0$, exactly as for the argument before we needed $0\leq t_0 < n-1$.  This has the effect that now we get
$\Sigma^1(F_{n,[-\infty,t_0]})^c=\{ - [\nu_1],[\nu_2]\}$. Thus  the correct statement of \cite[Lemma~4.9]{KMN1} using the definition of $\Sigma$ via right actions there is  $\Sigma^1(F_{n,[-\infty,t_0]})^c=\{ [\nu_1], - [\nu_2]\}$, slightly  different than what is stated in \cite{KMN1} Lemma 4.9 as $\{[\nu_1],[\nu_2]\}$.
However, this does not affect the main results of \cite{KMN1}, since with the notation of  \cite{KMN1} Lemma 4.11, we get that the map
$\widetilde{\varphi}^*$ swaps
$[\tilde{\mu}_1]$ with $[\tilde{\nu}_1]$ and $[\tilde{\mu}_2]$ with $[-\tilde{\nu}_2]$ and in \cite{KMN1} the fact that $\widetilde{\varphi}^*$ swaps
$[\tilde{\mu}_1]$ with $[\tilde{\nu}_1]$ was used and this  holds in our corrected version.
\end{proof}

From now on, we consider the case $n=2$ only and put $F:=F_{2,\infty}$.
We consider the following two characters of $F$. Let $f\in F$ be such that  $(x)f=x+i$ when $x>>0$ and $(x)f=x+j$ for $x<<0$ and define
$$\chi_1(f)=j,$$
$$\chi_2(f)=-i.$$
Note that as $F /F'$ has rank 2, $S(F)=\{[a\chi_1+b\chi_2]\mid a,b\in\R\}$.

Let $I$ be the unit interval and $\phi:I\to\R$ be the PL-homeomorphism with breakpoints $1/2^i$, $1-1/2^j$ for $i,j> 0$ such that
$(1/2^i)\phi=-i+1$, $(1-1/2^j)\phi= j-1$. Conjugating with $\phi$ yields an isomorphism from $F_{2,\infty}$ to the Thompson group of PL-homeomorphisms of $I$ which we denote
 $F_I$. Using this isomorphism we get for $F_I$  the characters $\alpha_i(h)=\chi_i(\phi^{-1} h\phi)$ for $i=1,2$ and one easily checks that $\alpha_i(h):=\text{log}_2((i-1)h')$. Therefore by Theorem A in \cite{BGK} (see also \cite{BNS}) after sign change,
$$\Sigma^1(F)=S(F)\setminus\{- [\chi_1],- [\chi_2] \},$$
\begin{equation}\label{SigmaF}\Sigma^\infty(F,\Z)^c = S(F)\setminus \Sigma^2(F,\Z)=  conv_{\leq 2} \{- [\chi_1], - [\chi_2]\} = $$ $$
\{[a\chi_1+b\chi_2]\mid a \leq  0, b \leq 0, (a,b) \not= (0,0) \}.\end{equation}

\begin{proposition}\label{SextF} Let $G=K\ltimes F$ with $K$ finite such that $C_{F}(K)<F$. Then $S(G)=\{[\nu],[-\nu]\}$
with $\nu:G\to\R$ the only character such that $\nu |_F=\chi_1+\chi_2$.
\end{proposition}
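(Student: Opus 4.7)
The plan is to compute $\Hom(G,\R)$ explicitly using Lemma \ref{finext} together with the structure of the finite-order automorphisms of $F$, and then read off $S(G)$.

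Since $K$ is finite, every character of $G$ kills $K$, so by Lemma \ref{finext}(i) restriction gives an injection $\Hom(G,\R) \hookrightarrow \Hom(F,\R)$, whose image by Lemma \ref{finext}(ii) is the $K$-fixed subspace $\Hom(F,\R)^K$. As $F^{ab}$ is free abelian of rank two with basis dual to $\chi_1,\chi_2$, we have $\Hom(F,\R) = \R\chi_1 \oplus \R\chi_2$, and the task reduces to computing the fixed line(s) of the induced $K$-action on this plane.

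Next I would show that $K$ injects into $\operatorname{Out}(F)$ with nontrivial image. Faithfulness of the action $K \to \operatorname{Aut}(F)$ is built into the semidirect product; the nontrivial point is that no nonidentity $k \in K$ acts by an inner automorphism. Indeed, $F$ is torsion-free with trivial centre, so if $k$ acted as conjugation by $f \in F$ then $f$ would be unique and the assignment $k \mapsto f$ an antihomomorphism from the finite cyclic $\langle k \rangle$ into the torsion-free $F$, which must be trivial. The hypothesis $C_F(K) < F$ then forces the image in $\operatorname{Out}(F)$ to be nontrivial.

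The key geometric input is that every nontrivial finite-order element of $\operatorname{Aut}(F)$ acts on $F^{ab}$ by swapping (the classes of) $\chi_1$ and $\chi_2$. This is a consequence of Brin's description of $\operatorname{Aut}(F)$: the only outer automorphism class of $F$ containing a genuinely finite-order automorphism is the one represented by the flip $\tau\colon x \mapsto -x$, since the other outer classes (e.g.\ translations by non-integer dyadics) have only infinite-order representatives in $\operatorname{Aut}(F)$. A direct check confirms that $\tau$-conjugation swaps $\chi_1$ and $\chi_2$: if $(x)f = x + j$ for $x \ll 0$, then $(x)(\tau f \tau^{-1}) = x - j$ for $x \gg 0$, so $\chi_2(\tau f \tau^{-1}) = j = \chi_1(f)$. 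Hence the $K$-action on $\R\chi_1 \oplus \R\chi_2$ factors through the order-two subgroup generated by the swap $(a,b) \mapsto (b,a)$, whose fixed line is $\R(\chi_1 + \chi_2)$.

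Combining, $\Hom(G,\R)$ is one-dimensional, spanned by the unique character $\nu$ extending $\chi_1 + \chi_2$, so $S(G) = \{[\nu], [-\nu]\}$. The main obstacle is the geometric input in the third step: without the structure theorem for $\operatorname{Aut}(F)$, it is not automatic that a nontrivial finite subgroup of $\operatorname{Aut}(F)$ must act on $F^{ab}$ by the swap, since a priori any finite subgroup of $GL_2(\Z)$ could be imagined, and the classification of finite-order outer automorphisms of $F$ is what rules the others out.
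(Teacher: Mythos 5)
Your proof follows the same route as the paper: both reduce via Lemma \ref{finext} to computing $\Hom(F,\R)^K$, and both invoke the structure of finite-order automorphisms of $F$ (ultimately from Brin--Guzm\'an) to conclude that any nontrivial finite action on $F^{\mathrm{ab}}$ is conjugation by a decreasing involution, hence swaps $\chi_1$ and $\chi_2$; the paper packages this via the cited results \cite[Theorem~C, Lemma~4.1]{KMN1} rather than quoting Brin's classification directly. One small misstatement: faithfulness of $K\to\operatorname{Aut}(F)$ is \emph{not} built into the semidirect product (the defining action may well have a kernel), but this is harmless here --- the hypothesis $C_F(K)<F$ ensures the image of $K$ in $\operatorname{Aut}(F)$ is nontrivial, and your torsion-free/trivial-centre argument then shows that this image meets $\operatorname{Inn}(F)$ trivially, which is all that is needed.
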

\begin{proof} By the same argument of \cite{KMN1} Theorem C, there is a subgroup $K_0\leq K$ of index 2 acting trivially on $F$.
Then, $KF/K_0\cong (K/K_0) \ltimes F$, and the action of $K/K_0$ on $F$ is given by conjugation with a decreasing homeomorphism $h$ of $\R$ such that $h^2=id$ (see \cite{KMN1}, Lemma 4.1).
 Let $[\nu]\in S(G)$. By Lemma \ref{finext}, $\nu |_F\in Hom(F, \mathbb{R})^K=Hom(F, \mathbb{R})^{K/K_0}$. The fact that $h$ is decreasing implies that the induced action of $K/K_0$ in $S(F)$ swaps $[\chi_1]$ and $[\chi_2]$, this also follows taking into account that $\Sigma^1(F,\Z)^c=\{ - [\chi_1], - [\chi_2]\}$. Therefore
$$S(F)^{K/K_0}=\{[\chi_1+\chi_2], [-\chi_1-\chi_2]\}$$
and the claim follows by Lemma \ref{finext}.
\end{proof}

As a consequence, we get

\begin{lemma}\label{SigmaextF} Let $G=K\ltimes F$ with $K$ finite such that $C_{F}(K)<F$. Then $\Sigma^\infty(G,\Z)=\{[\nu]\}$ with $\nu |_F=\chi_1+\chi_2$.
\end{lemma}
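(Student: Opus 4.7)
The plan is to reduce the computation of $\Sigma^\infty(G,\Z)$ to the already known description of $\Sigma^\infty(F,\Z)$ via the restriction to a finite index subgroup, using Proposition \ref{SextF} to enumerate $S(G)$.

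First, by Proposition \ref{SextF} we know that $S(G) = \{[\nu], [-\nu]\}$, so the statement reduces to deciding which of these two classes lies in $\Sigma^\infty(G,\Z)$. The key tool is \cite[Thm.~9.3]{MMV} (cited in this paper as Theorem \ref{finiteindex}), which asserts that for a finite index subgroup $H \leq G$ and a character $\chi$ of $G$, one has $[\chi] \in \Sigma^m(G,\Z)$ if and only if $[\chi|_H] \in \Sigma^m(H,\Z)$. Applying this with $H = F$, which has finite index in $G = K \ltimes F$, the question becomes whether $[\nu|_F] = [\chi_1+\chi_2]$ and $[-\nu|_F] = [-\chi_1-\chi_2]$ belong to $\Sigma^\infty(F,\Z)$.

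Now I would invoke the explicit formula (\ref{SigmaF}): namely,
\[
\Sigma^\infty(F,\Z)^c = \{[a\chi_1+b\chi_2]\mid a\leq 0,\ b\leq 0,\ (a,b)\neq (0,0)\}.
\]
The class $[\chi_1+\chi_2]$ corresponds to $(a,b)=(1,1)$ and hence lies in $\Sigma^\infty(F,\Z)$, whereas $[-\chi_1-\chi_2]$ corresponds to $(a,b)=(-1,-1)$ and therefore lies in $\Sigma^\infty(F,\Z)^c$. Combined with the restriction principle above, this yields $[\nu] \in \Sigma^\infty(G,\Z)$ and $[-\nu] \notin \Sigma^\infty(G,\Z)$, so $\Sigma^\infty(G,\Z) = \{[\nu]\}$.

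Since Proposition \ref{SextF} already supplies the full character sphere and (\ref{SigmaF}) gives the $\Sigma$-invariant for $F$ itself, there is no genuine obstacle: the argument is a two-line application of finite-index invariance of $\Sigma^m$. The only point deserving care is the sign convention, because the corrected version of \cite[Lemma~4.9]{KMN1} discussed in Lemma \ref{correction} shows that the switch between left and right actions passes to the antipodal set; one must check that with the left-module convention used throughout this paper the exceptional classes in $\Sigma^\infty(F,\Z)^c$ are indeed $[-\chi_1]$ and $[-\chi_2]$ (and hence the whole negative quadrant by the polyhedrality results of \cite{BGK}), which is precisely how (\ref{SigmaF}) is stated.
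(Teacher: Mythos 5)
Your proposal is correct and follows exactly the same route as the paper's (one-line) proof: finite-index invariance from \cite[Thm.~9.3]{MMV}, the enumeration of $S(G)$ from Proposition \ref{SextF}, and the explicit description of $\Sigma^\infty(F,\Z)^c$ in (\ref{SigmaF}). You have simply unpacked the arithmetic that the paper leaves implicit.
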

\begin{proof}
It follows by \cite[Thm.~9.3]{MMV} (see Theorem \ref{finiteindex}), Proposition \ref{SextF} and (\ref{SigmaF}).
\end{proof}

\begin{theorem} \label{BredonThompson} Let $G=K\ltimes F$ with $K$ finite. Then
$$\underline{\Sigma}^\infty(G,\underline{\Z})=\Sigma^\infty(G,\Z).$$
\end{theorem}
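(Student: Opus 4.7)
By Remark~\ref{subset}, $\underline{\Sigma}^\infty(G,\underline{\Z}) \subseteq \Sigma^\infty(G,\Z)$, so the task is the reverse inclusion. Since $G$ is Bredon $FP_\infty$ by \cite{ConcBri}, it has finitely many conjugacy classes of finite subgroups and Theorem~\ref{condition2} applies. I split the argument according to whether $K$ centralises $F$.

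If $C_F(K) = F$ then $G = K \times F$. Because $F$ is torsion-free, every finite subgroup of $G$ lies in $K$. For $[\chi] \in \Sigma^\infty(G,\Z)$, $\chi$ vanishes on $K$ and $[\chi|_F] \in \Sigma^\infty(F,\Z)$. Taking $\widetilde G = G$, the normaliser $N_G(K_1) = N_K(K_1) \times F$ contains $F$, so both conditions of Theorem~\ref{condition2} follow at once, the second via Theorem~\ref{finiteindex} applied to the finite index inclusion $F \leq N_G(K_1)$.

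If $C_F(K) < F$, Lemma~\ref{SigmaextF} gives $\Sigma^\infty(G,\Z) = \{[\nu]\}$ with $\nu|_F = \chi_1 + \chi_2$, so it suffices to prove $[\nu] \in \underline{\Sigma}^\infty(G,\underline{\Z})$. Setting $K_0 := C_K(F) \leq K$, of index $2$ by \cite[Thm.~C]{KMN1}, and letting $h$ be the decreasing order-two PL-homeomorphism of $\R$ through which $K/K_0$ acts on $F$ (with unique fixed point $t_0$), any finite $K_1 \leq G$ injects into $K$ and is $G$-conjugate either into $K_0$ (so that $C_F(K_1) = F$) or contains an involution acting on $F$ as $h$ (so that $C_F(K_1)$ is $F$-conjugate to $C_F(h)$). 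The key computation is that $f \circ h = h \circ f$, combined with $h$ decreasing and $h^2 = \mathrm{id}$, forces $\chi_1(f) = -\chi_2(f)$ for every $f \in C_F(h)$; hence $\nu(C_F(h)) \subseteq 2\Z$ while $\nu(F) = \Z$. Taking $\widetilde G := \nu^{-1}(2\Z)$, a subgroup of $G$ of index at most $2$ containing $G'$ and all finite subgroups, condition~1 of Theorem~\ref{condition2} holds because $\nu(N_{\widetilde G}(K_1)) = 2\Z = \nu(\widetilde G)$ in both sub-cases. For condition~2 I use the centraliser form 2b of Remark~\ref{centr01} together with Theorem~\ref{finiteindex} to reduce to verifying $[\nu|_{C_F(K_1)}] \in \Sigma^\infty(C_F(K_1),\Z)$: in the trivial-action sub-case this is formula~(\ref{SigmaF}), while in the non-trivial-action sub-case, identifying $C_F(h)$ with $F_{2,[t_0,\infty)}$ via restriction sends $\nu|_{C_F(h)}$ to $2\mu_2$, whose class lies in $\Sigma^\infty(F_{2,[t_0,\infty)},\Z)$ by the convex-hull formula analogous to (\ref{SigmaF}) built on Lemma~\ref{correction}.

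The main obstacle is the non-trivial-action sub-case of condition~2: establishing the symmetry $\chi_1(f) = -\chi_2(f)$ on $C_F(h)$, making explicit the identification of $C_F(K_1)$ (up to finite index) with $F_{2,[t_0,\infty)}$, and upgrading the $\Sigma^1$-computation of Lemma~\ref{correction} to a full $\Sigma^\infty$-statement via a \cite{BGK}-style convex-hull argument for the one-sided Thompson-type groups.
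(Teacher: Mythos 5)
Your overall strategy mirrors the paper's: reduce to the reverse inclusion via Remark \ref{subset}, split on whether $C_F(K)=F$, invoke Lemma \ref{SigmaextF} to pin $\Sigma^\infty(G,\Z)$ down to $\{[\nu]\}$, choose a suitable $\widetilde G$, and verify the two conditions of Theorem \ref{condition2}, passing to centralisers via Remark \ref{centr01} and Theorem \ref{finiteindex}. Your $\widetilde G := \nu^{-1}(2\Z)$ in fact coincides with the paper's $\widetilde G = C_F(K)\,\text{Ker}(\nu)$ once one knows $\nu(C_F(K))=2\Z$, so that is only a superficial difference.

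There is, however, a genuine error in your ``key computation.'' You assert that $f h = h f$, with $h$ decreasing and $h^2=\mathrm{id}$, forces $\chi_1(f) = -\chi_2(f)$ on $C_F(h)$. This is backwards: the correct relation is $\chi_1 = \chi_2$ on $C_F(h)$. Indeed $\chi_2^\varphi = \chi_1$ (conjugation by $h$ swaps the two ends of $\R$), so for $f\in C_F(\varphi)$ one has $\chi_1(f) = \chi_2^\varphi(f) = \chi_2(f^{\varphi^{-1}}) = \chi_2(f)$, and hence $\nu(f)=\chi_1(f)+\chi_2(f)=2\chi_2(f)$; concretely, with $(x)f=x+i$ for $x\gg0$ one gets $j=-i$ and $\nu(f)=-2i$. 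The distinction matters: if your stated identity $\chi_1=-\chi_2$ held, then $\nu$ would \emph{vanish} identically on $C_F(h)$, so $\chi(C_G(K_1))=0$ and condition 2b would fail. Your later sentences (``$\nu(C_F(h))\subseteq 2\Z$,'' ``$\nu|_{C_F(h)}$ is sent to $2\mu_2$'') are consistent with the corrected relation, so this looks like a slip rather than a structural misconception, but as written the ``key computation'' contradicts the conclusion you draw from it.

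The two obstacles you flag at the end are exactly what the paper supplies by citation rather than by a fresh convex-hull argument: the identification $\rho: C_F(\varphi)\to F_{2,[t_0,\infty]}$ is \cite[Thm.~4.14]{KMN1}; that $t_0\in\Z[\tfrac12]$ (so that $F_{2,[t_0,\infty]}\cong F$) is \cite[Thm.~7.3]{KMN1}; and then $[\mu_2]\in\Sigma^\infty(F_{2,[t_0,\infty]},\Z)$ is read off directly from Lemma \ref{correction} together with the formula (\ref{SigmaF}) transported along that isomorphism. No separate \cite{BGK}-style computation for the one-sided groups is needed.
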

\begin{proof} We only have to check that any $[\chi] \in\Sigma^\infty(G,\Z)$ also lies in $\underline{\Sigma}^\infty(G,\underline{\Z})$. As by Corollary D of \cite{KMN1} we know that $G$ has only finitely many conjugacy classes of finite subgroups, all we have to do is to check whether $\chi$ satisfies the conditions of  Theorem \ref{condition2}.

If $C_F(K)=F$, then $G=K\times F$ so both conditions from Theorem \ref{condition2} are trivial (taking $\widetilde G=G$).

So we may assume $C_F(K)<F$ thus by Lemma \ref{SigmaextF}, $[\nu]=[\chi]$ and we may assume $\chi = \nu$, in particular it is discrete (all the characters of $G$ are).
Fix
$$
\widetilde{G} = C_F(K) \text{Ker}(\nu).
$$
Observe that any finite subgroup of $G$ is contained in $\text{Ker}(\nu)$, hence is contained in $\widetilde{G}$.
This means that the result will follow if we prove that $\widetilde{G}$ has finite index in $G$ and for any finite $Q\leq G$,
$N_G(Q)\not\leq\text{Ker}(\nu)$ and $[\nu |_{N_G(Q)}] \in\Sigma^\infty(N_G(Q),\Z)$.   If $Q$ acts trivially on $F$, then last two assertions are obvious. And in other case,
arguing as in Proposition \ref{SextF}, we see that there is a $Q_0\leq Q$ of index 2 acting trivially on $F$ so that  the action of $Q/Q_0=\langle\varphi\rangle$ on $F$ is given by conjugation with a decreasing homeomorphism $h$ of $\R$ such that $h^2=id$. Note that $\chi_2^\varphi=\chi_1$.
Now, by  \cite[Thm.~4.14]{KMN1}  there is an isomorphism
$$\rho:C_{F}(\varphi)\to F_{2,[t_0,\infty]}$$
where $t_0\in\R$ is the only element such that $(t_0)h=t_0$ and  $\rho$ sends $f$ to its restriction on $[t_0,\infty]$. Furthermore it was shown in \cite[Thm.~7.3]{KMN1} that $t_0 \in \Z[{1 \over 2}]$, hence $F_{2,[t_0,\infty]} \simeq F$. By Lemma \ref{correction} $\Sigma^1(F_{2,[t_0,\infty]})^c=\{ -[\mu_1], -[\mu_2]\} $ and since $F_{2,[t_0,\infty]} \simeq F$  and by (\ref{SigmaF}) $$[\mu_2] \in \Sigma^{\infty}(F_{2,[t_0,\infty]}).$$
\noindent
Let $\mu:F_{2,[t_0,\infty]}\to\R$ be the character obtained by composing $\nu |_{C_F(\varphi)} \rho^{-1}$.  Since $C_F(\varphi)$ has finite index in $N_G(Q)$ and  by \cite[Theorem 9.3]{MMV} (see Theorem \ref{finiteindex}), we only have to check that $\mu\neq 0$ and $[\mu] \in\Sigma^{\infty}(F_{2,[t_0,\infty]})$. And to understand $\mu$ basically we only have to understand $\rho^{-1}$, which by \cite[Theorem 4.14]{KMN1}  sends $f\in F_{2,[t_0,\infty]}$ to the only $\widetilde f:\R\to\R$ with $(x)f=(x)\widetilde f$ for $x\in[t_0,\infty)$ and such that $\widetilde fh=h\widetilde f$. Assume that $(x)f=x+i$ for $i>>0$. Then as $\tilde f\in C_F(\varphi)$ and $\chi_2^\varphi=\chi_1$ we have
$$\mu(f)=\nu(\widetilde f)= \chi_1(\widetilde f) + \chi_2(\widetilde f)=\chi_2^\varphi(\widetilde f) + \chi_2(\widetilde f)= 2\chi_2(\widetilde f)= - 2i, $$
which means that $0\neq [\mu]= [\mu_2]\in\Sigma^\infty(F_{2,[t_0,\infty]},\Z)$.
In particular for $Q=K$ we get
$\text{Ker}(\nu)<C_F(K)\text{Ker}(\nu)=\widetilde G$ thus $[G : \widetilde{G}] < \infty$.
\end{proof}

\begin{theorem}  \label{finalThompson} Let $G$ be a finite extension of the R. Thompson group $F$. Then
$$\underline{\Sigma}^\infty(G,\underline{\Z})=\Sigma^\infty(G,\Z).$$
\end{theorem}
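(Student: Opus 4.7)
The plan is to reduce Theorem \ref{finalThompson} to the split case of Theorem \ref{BredonThompson} by passing to a canonical finite index subgroup of $G$. The inclusion $\underline{\Sigma}^\infty(G,\underline{\Z}) \subseteq \Sigma^\infty(G,\Z)$ is given by Remark \ref{subset}, so only the reverse inclusion needs to be proved.

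The first step is structural. Since $F$ has trivial centre and $\operatorname{Out}(F)$ is a finite group, the preimage $G_0$ in $G$ of the kernel of $G/F \to \operatorname{Out}(F)$ is a normal subgroup of finite index in $G$ whose quotient $G_0/F$ acts on $F$ by inner automorphisms. Combined with $Z(F)=1$, this yields $G_0 = F \cdot C_{G_0}(F) = F \times K_0$, where $K_0 := C_{G_0}(F)$ is finite (as it injects into $G/F$). In particular Theorem \ref{BredonThompson} applies directly to $G_0$, since $K_0$ acts trivially on $F$.

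Given $[\chi] \in \Sigma^\infty(G,\Z)$, the restriction $\chi|_{G_0}$ is nonzero (because $G/G_0$ is finite) and lies in $\Sigma^\infty(G_0,\Z)$ by \cite[Thm.~9.3]{MMV}, so $[\chi|_{G_0}] \in \underline{\Sigma}^\infty(G_0,\underline{\Z})$ by Theorem \ref{BredonThompson}. I will then verify the conditions of Theorem \ref{condition2} for $(G,\chi)$, taking $\widetilde G = G$ when $G = G_0$, and otherwise $\widetilde G = C_F(\varphi)\ker\chi$, where $\varphi \in G \setminus G_0$ is a fixed lift of the non-trivial element of $G/G_0 \le \operatorname{Out}(F) \cong \Z/2$. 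In both cases $\widetilde G$ has finite index in $G$ and contains $G'$ together with every finite subgroup of $G$, since both lie in $\ker\chi$. The finiteness of the set of conjugacy classes of finite subgroups of $G$ is Corollary~D of \cite{KMN1}.

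For condition 2 of Theorem \ref{condition2} --- namely $\chi(N_G(Q)) \neq 0$ and $[\chi|_{N_G(Q)}] \in \Sigma^\infty(N_G(Q),\Z)$ for each finite $Q \le G$ --- I reduce via \cite[Thm.~9.3]{MMV} to the analogous statement for the finite index subgroup $N_{G_0}(Q) \le N_G(Q)$, which is a consequence of Theorem \ref{BredonThompson} applied to $G_0$ and $\chi|_{G_0}$. Condition 1, which (using $\ker\chi \le \widetilde G$) is equivalent to $\chi(N_G(Q)) \supseteq \chi(\widetilde G)$ for every finite $Q \le G$, will be verified by imitating the proof of Theorem \ref{BredonThompson}: since $\chi$ is invariant under conjugation, we may replace $Q$ by a representative from the finite list of conjugacy class representatives, and for each such representative the inclusion follows from the explicit description of $C_F(Q)$ in \cite[Thm.~4.14, Thm.~7.3]{KMN1} together with the formula $\chi|_{C_F(Q)} = 2\chi_2$ obtained in the proof of Theorem \ref{BredonThompson}.

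The main obstacle is condition 1 for finite subgroups $Q \not\le G_0$ in the non-split case. These subgroups act on $F$ by the non-trivial outer automorphism coming from a decreasing involutive homeomorphism of $\R$, and one must adapt the explicit computation from Theorem \ref{BredonThompson} --- in particular the identification $C_F(Q) \cong F_{2,[t_0,\infty]} \cong F$ with $t_0 \in \Z[\tfrac{1}{2}]$ --- to ensure that $\chi(N_G(Q))$ contains the required image $\chi(\widetilde G)$.
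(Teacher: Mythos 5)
Your reduction to $G_{0}=F\times K_{0}$ does not actually resolve the hard part of the argument, and the proposal stops precisely where the work begins. Since $K_{0}=C_{G_{0}}(F)$ acts trivially on $F$, applying Theorem~\ref{BredonThompson} to $G_{0}$ is essentially vacuous (it is the case $C_{F}(K)=F$, handled with $\widetilde G=G_{0}$ in that proof), and Theorem~\ref{condition2} applied to $G_{0}$ only governs normalizers $N_{G_{0}}(Q)$ for finite $Q\leq G_{0}$. It gives no information about $N_{G_{0}}(Q)$ when $Q\not\leq G_{0}$, which is exactly the case you flag as ``the main obstacle'' and then leave open. In fact the gap affects your condition~2 as well as condition~1: the claim that the statement for $N_{G_{0}}(Q)\leq N_{G}(Q)$ ``is a consequence of Theorem~\ref{BredonThompson} applied to $G_{0}$'' is unsubstantiated for $Q\not\leq G_{0}$.

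The paper avoids this by never passing to $G_{0}$. Instead, for each conjugacy class representative $K_{i}$ of finite subgroups of $G$ it forms $G_{i}:=FK_{i}=F\rtimes K_{i}\leq G$; this is always a \emph{split} extension because $F$ is torsion-free, so $K_{i}\cap F=1$. Then the proof of Theorem~\ref{BredonThompson} is applied to each $G_{i}$ individually (for $K_{i}$ acting non-trivially this is where the $C_{F}(\varphi)\cong F_{2,[t_{0},\infty]}$ computation and the formula $\mu=2\chi_{2}$ are used), giving that $\chi(N_{G_{i}}(K_{i}))\neq 0$ and $[\chi\vert_{N_{G_{i}}(K_{i})}]\in\Sigma^{\infty}(N_{G_{i}}(K_{i}),\Z)$, and one passes to $N_{G}(K_{i})$ by Theorem~\ref{finiteindex}. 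Finally, the paper sets $\widetilde G=\bigcap_{i}C_{F}(K_{i})\ker(\chi)$: your choice $\widetilde G=C_{F}(\varphi)\ker\chi$ for a single lift $\varphi$ cannot in general satisfy the inclusion $\widetilde G\subseteq N_{G}(K)\ker\chi$ for \emph{every} finite $K$, which is what Lemma~\ref{conjfinite1} requires; one must intersect over all conjugacy classes. So the missing idea is to work with the per-class split subgroups $G_{i}=F\rtimes K_{i}$ rather than a single global split subgroup $G_{0}$.
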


\begin{proof}
By Corollary D of \cite{KMN1} there are finitely many conjugacy classes of finite subgroups in $G$, let $K_1, \ldots, K_s$ be representatives of these conjugacy classes.
For $1 \leq i \leq s$ set $G_i = F_i \rtimes K_i$.

Let $[\chi] \in \Sigma^{\infty}(G, \mathbb{Z})$. We have to show that $[\chi] \in \underline{\Sigma}^\infty(G,\underline{\Z})$.
Note that by \cite[Thm.~9.3]{MMV} (see Theorem \ref{finiteindex})  for $i \leq s$ we have $[\chi \mid_{G_i}] \in \Sigma^{\infty}(G_i, \mathbb{Z})$. By the proof of Theorem \ref{BredonThompson} \begin{equation} \label{generalthompson1} [\chi \mid_{N_{G_i}(K_i)}] \in \Sigma^{\infty}(N_{G_i}(K_i), \mathbb{Z}). \end{equation}

Let $K$ be a finite subgroup of $G$, so $K = K_i^g$  for some $g,i$. Thus to establish the second condition of Theorem 6.6 it suffices to consider the case when $K = K_i$. Note that $[N_G(K_i) : N_{G_i}(K_i)] < \infty$. By (\ref{generalthompson1}) and  Theorem
\ref{finiteindex} we get that
$$
\chi(N_G(K_i)) \not= 0 \hbox{ and } [\chi \mid_{N_{G}(K_i)}] \in \Sigma^{\infty}(N_{G}(K_i), \mathbb{Z}),
$$
so the second condition of Theorem \ref{condition2} holds.

Set $$\widetilde{G} = \cap_{1 \leq i \leq s} (C_F(K_i) Ker (\chi)).$$ Note that for $g \in G$ we have $C_F(K_i^g) Ker (\chi) = C_F(K_i)^g Ker (\chi) = C_F(K_i) Ker (\chi)$, so $\widetilde{G}$ is the intersection of $C_G(K) Ker (\chi)$ where $K$ runs through all finite subgroups of $G$. We claim that $C_F(K_i) (Ker (\chi) \cap G_i) $ has finite index in $G_i$,  hence in $G$. Indeed if $K_i$ acts non-trivially on $F$  this follows from the proof of Theorem \ref{BredonThompson} and the fact that $[\chi \mid_{G_i}] \in \Sigma^{\infty}(G_i, \mathbb{Z})$, so $\chi \mid_{G_i}$ is the unique character of $G_i$ whose restriction on $F$ is $\chi_1 + \chi_2$. If $K_i$ acts trivially on $F$ we have $F \leq C_G(K_i)$, so $[G_i : C_F(K_i) (Ker (\chi) \cap G_i) ] \leq [G_i : F] < \infty$. Thus $\widetilde{G}$ has finite index in $G$. By Lemma \ref{conjfinite1} applied for the normal subgroup
$N = Ker (\chi)$ of $G$  we get that $\widetilde{G}_{\chi}$ conjugates finite subgroups, hence by Lemma \ref{conjfinite} applied for the group $\widetilde{G}$ the first condition of Theorem \ref{condition2} holds.

\end{proof}

\section{Soluble groups of type $FP_{\infty}$}

We recall some results that will be usefull in this section.

\begin{theorem} \cite[Cor.~5.2]{Meinert} \label{Meinert1} Let $G$ be a nilpotent-by-abelian group of type $FP_{\infty}$ then
$$
\Sigma^{\infty}(G, \mathbb{Z})^c = conv \Sigma^1(G, \mathbb{Z})^c
$$
\end{theorem}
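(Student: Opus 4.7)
The plan is to treat the theorem as a consequence of a reduction to $\Sigma$-invariants of finitely generated modules over abelian groups, together with a convex-hull formula in that setting (the metabelian / $\Z Q$-module case being the heart of the matter).

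First I would do the easy inclusion $\mathrm{conv}\,\Sigma^1(G,\Z)^c \subseteq \Sigma^\infty(G,\Z)^c$. Since $\Sigma^m(G,\Z) \subseteq \Sigma^1(G,\Z)$ for all $m$, one has $\Sigma^1(G,\Z)^c \subseteq \Sigma^m(G,\Z)^c$. General convexity for higher $\Sigma$-invariants (a finite convex combination of $r$ characters in $\Sigma^1(G,\Z)^c$ lies in $\Sigma^r(G,\Z)^c$, via a direct chain-complex argument) then yields $\mathrm{conv}_{\le m}\Sigma^1(G,\Z)^c \subseteq \Sigma^m(G,\Z)^c$, and passing to the union over $m$ gives the desired inclusion for $\Sigma^\infty = \bigcap_m \Sigma^m$.

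For the reverse inclusion I would reduce to a module-theoretic computation. Let $N\lhd G$ be the Fitting subgroup (nilpotent, normal) with $Q:=G/N$ abelian; every character $\chi:G\to\R$ factors through $Q$. Using the Lyndon--Hochschild--Serre spectral sequence together with the Bieri--Strebel $\Sigma$-criterion for modules, the condition $[\chi]\in\Sigma^m(G,\Z)$ translates into a condition on the $\Sigma^m$-invariants of the abelian sections of the lower central series of $N$, each of which is a finitely generated $\Z Q$-module. The problem therefore reduces to proving, for a finitely generated $\Z Q$-module $A$ with $Q$ finitely generated abelian, that
\[
\Sigma^\infty(Q;A)^c \;=\; \mathrm{conv}\,\Sigma^1(Q;A)^c.
\]
This module-theoretic identity is the Bieri--Groves convexity theorem and uses crucially the fact that $\Sigma^1(Q;A)^c$ is a rationally defined polyhedral subset of $S(Q)$, a description coming from valuation theory on $\Z Q$.

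The hard part will be the reverse inclusion, and specifically the polyhedrality and convex-hull formula at the module level. Once one has it for modules, one must carefully translate it back through the spectral sequence: a character $\chi$ failing $\Sigma^\infty$ at some page must be witnessed, via the identified module $A$, by a representation of $\chi$ as a convex combination of finitely many characters lying in the polyhedral set $\Sigma^1(Q;A)^c$, which one then lifts to $\Sigma^1(G,\Z)^c$. The spectral-sequence bookkeeping (ensuring that the module $\Sigma^1$-data really assembles into the group-level $\Sigma^1$-data of $G$) is where nilpotent-by-abelian — rather than just soluble — and $FP_\infty$ are used; both ensure that only finitely many such modules intervene and that their $\Sigma$-complements are controlled.
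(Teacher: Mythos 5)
This statement is not proved in the paper at all: it is quoted verbatim as \cite[Cor.~5.2]{Meinert}, and the paper's \textbf{entire} argument for it consists of the citation. So there is no ``paper's own proof'' to compare your sketch against; what can be assessed is whether your sketch would actually reconstruct Meinert's result, and here there are real problems.

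First, a structural error in the reduction. You take $N$ to be the Fitting subgroup and assert that every character $\chi\colon G\to\R$ factors through $Q=G/N$. That is only true if $N\subseteq G'$, and the Fitting subgroup of a nilpotent-by-abelian group typically \emph{contains} $G'$ strictly. To make characters factor through $Q$ you must take $N=G'$ (which is nilpotent here), and then the spectral-sequence setup you propose has to be redone from scratch with that choice; the ``abelian sections of the lower central series of $N$'' then no longer give finitely generated $\Z Q$-modules for free, and the $FP_\infty$ hypothesis is doing real work that your sketch treats as bookkeeping.

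Second, and more seriously, you identify the crux --- the module-level identity $\Sigma^{\infty}(Q;A)^c=\mathrm{conv}\,\Sigma^1(Q;A)^c$ --- and then claim ``this module-theoretic identity is the Bieri--Groves convexity theorem.'' It is not. Bieri--Groves \cite{BG} proves that $\Sigma_A(Q)^c$ is a rationally defined spherical polyhedron; it says nothing about the relation between the higher invariants and the convex hull of the first. That relation is precisely the \emph{$\Sigma^m$-conjecture}, and the content of the very paper you are trying to reprove --- Meinert's \cite{Meinert} --- is a proof of that conjecture. So your plan, stripped to essentials, is: reduce the group statement to the module statement, and then cite the module statement; but the module statement is the theorem. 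This is circular. The polyhedrality you invoke is an ingredient in Meinert's proof, not a substitute for it, and the hard homological input (a careful analysis of which free resolutions over $\Z Q_\chi$ can be truncated, controlled by the polyhedral geometry of the complement) is entirely absent from the sketch. Similarly, the ``easy inclusion'' $\mathrm{conv}_{\le m}\Sigma^1(G,\Z)^c\subseteq\Sigma^m(G,\Z)^c$ is not a one-line chain-complex observation for nilpotent-by-abelian $G$; it is itself a nontrivial result in this setting.

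In short: since the paper only cites this theorem, the appropriate move is to cite Meinert (and, if one wants, to recall that his proof rests on Bieri--Groves polyhedrality plus a genuine homological argument), not to attempt a blind reconstruction. As written, your proposal misattributes the key step and assumes, at the module level, exactly what needs to be proved.
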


 \begin{theorem}\cite[Thm.~9.3]{MMV} \label{finiteindex} Let $H$ be a subgroup of finite index in a finitely generated group $G$ and $\chi : G \to \mathbb{R}$ be a non-trivial character. Then $[\chi] \in \Sigma^m(G, \mathbb{Z})$ if and only if $[\chi |_{H}] \in \Sigma^m(H, \mathbb{Z})$.
 \end{theorem}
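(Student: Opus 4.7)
The plan is to reduce to a change-of-rings argument between the monoid rings $\Z G_\chi$ and $\Z H_{\chi|_H}$, where $H_{\chi|_H}:=H\cap G_\chi$. First I would note that $\chi|_H$ is non-trivial: otherwise $\chi$ would factor through the finite group $G/H$ and, since $\R$ is torsion-free, would itself be zero, contradicting the hypothesis. So both invariants in the statement are well-defined.

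The key step is to realize $\Z G_\chi$ as a free $\Z H_{\chi|_H}$-module of finite rank $[G:H]$. Since $[G:H]<\infty$, the image $\chi(H)$ has finite index in $\chi(G)$; in the discrete case ($\chi(G)\cong\Z$) this lets me choose coset representatives $g_1,\ldots,g_n$ for $H$ in $G$ with $\chi(g_i)=0$ for every $i$, after which $G_\chi=\sqcup_i H_{\chi|_H}g_i$ yields the desired free-module description. With this in hand, restricting an $FP_m$-resolution of $\Z$ over $\Z G_\chi$ down to $\Z H_{\chi|_H}$ gives the forward implication (as $\Z G_\chi$ is finitely generated free over the subring), while the converse follows by inducing a $\Z H_{\chi|_H}$-resolution of $\Z$ up to $\Z G_\chi$ and applying Eckmann--Shapiro (or a direct dimension-shifting argument) to the augmentation $\Z[G_\chi/H_{\chi|_H}]\twoheadrightarrow\Z$.

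The dense case of $\chi(G)$ is the main obstacle: coset representatives with $\chi(g_i)=0$ need not exist and the monoid decomposition above breaks down. My strategy is to bypass the algebraic set-up entirely by invoking the Bieri--Renz homotopical characterization of $\Sigma^m$: a class $[\chi]$ lies in $\Sigma^m(G,\Z)$ if and only if, for a cocompact $(m-1)$-connected $G$-CW complex $X$ with equivariant lift $\tilde\chi:X\to\R$, the superlevel filtration $\{\tilde\chi\geq t\}$ is essentially $(m-1)$-connected as $t\to\infty$. Any such $G$-cocompact complex is automatically $H$-cocompact since $[G:H]<\infty$, and $\tilde\chi$ is unchanged when the action is restricted to $H$, so the criteria for $(G,\chi)$ and $(H,\chi|_H)$ literally coincide. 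This observation uniformly covers both the discrete and dense cases and closes the proof.
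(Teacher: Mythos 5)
The paper does not prove this statement---it is cited directly from \cite[Thm.~9.3]{MMV}---so your proof has to be judged on its own, and both cases have gaps. In the discrete case, the claim that coset representatives with $\chi(g_i)=0$ exist is equivalent to $\chi(G)=\chi(H)$, which already fails for $G=\Z$, $H=2\Z$, $\chi=\mathrm{id}$. The fix is to rescale so that $\chi(G)=\Z$, write $\chi(H)=d\Z$ with $d\geq 1$, and adjust the representatives of $G=\bigsqcup_i Hg_i$ by elements of $H$ so that $0\leq\chi(g_i)<d$; then for $h\in H$ one has $\chi(h)\geq-\chi(g_i)$ if and only if $\chi(h)\geq 0$, so $Hg_i\cap G_\chi=H_{\chi|_H}g_i$ and $\Z G_\chi$ is indeed free of rank $[G:H]$ over $\Z H_{\chi|_H}$ on the left (and, by the analogous left-coset argument, also on the right). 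With this corrected decomposition the forward direction by restriction is fine; for the converse it is cleaner to restrict a $\Z G_\chi$-resolution and use that a module which is $\FP_m$ over the smaller ring is $\FP_m$ over the larger one, rather than the induction-and-Shapiro step as you state it, which only produces a resolution of $\Z[G_\chi/H_{\chi|_H}]$ and leaves a step to $\Z$ unexplained.

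In the dense case you invoke the homotopical Bieri--Renz criterion (superlevel sets essentially $(m-1)$-connected), but the theorem concerns the homological invariant $\Sigma^m(G,\Z)$, whose chain-level characterization is in terms of a valuation on a free $\Z G$-resolution of type $\FP_m$ extending $\chi$, with the superlevel subcomplexes essentially $(m-1)$-acyclic. Once you switch to that formulation your idea does work, and in fact gives a uniform argument that makes the case split unnecessary: a free $\Z G$-resolution of $\Z$ of type $\FP_m$ together with a $G$-equivariant valuation extending $\chi$ restricts, because $[G:H]<\infty$, to a free $\Z H$-resolution of type $\FP_m$ with an $H$-valuation extending $\chi|_H$; the essential acyclicity condition is a statement about the filtered chain complex alone and does not see which group acts; and the Bieri--Renz independence of the criterion from the chosen resolution lets you pass from an arbitrary $H$-side witness back to this particular resolution for the converse. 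As written, though, the proposal both asserts a false claim in the discrete case and uses the wrong ($(m-1)$-connected rather than $(m-1)$-acyclic, homotopical rather than homological) criterion in the dense case.
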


 \begin{theorem} \cite[Thm.~2.4]{ConcBri} \label{conjugacy} Let $\Gamma$ be a virtually soluble group of type $FP_{\infty}$. Then there is only a finite number of conjugacy classes of finite subgroups of $\Gamma$.
 \end{theorem}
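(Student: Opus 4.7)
My plan is to produce a cocompact model for the classifying space for proper actions $\underline{E}\Gamma$, and then invoke the standard fixed-point argument for finite group actions on contractible complexes.

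The structural starting point is Kropholler's theorem that every soluble group of type $\FP_\infty$ is constructible, combined with the classical fact that virtually soluble groups of type $\FP_\infty$ have finite virtual cohomological dimension and are therefore virtually torsion-free. Together these yield that $\Gamma$ admits a torsion-free, nilpotent-by-abelian subgroup $\Gamma_0$ of finite index, which is of type $\FP_\infty$ and (being torsion-free, finitely presented, of finite cohomological dimension) of type $F$; in particular $\Gamma_0$ has a finite contractible $\Gamma_0$-CW model $X$ for $E\Gamma_0$. A standard finite-index induction (the L\"uck-Meintrup machinery: $\Gamma_0$ of type $F$ with $[\Gamma:\Gamma_0]<\infty$ forces $\Gamma$ to be of type $\underline{F}$) promotes $X$ to a cocompact proper contractible $\Gamma$-CW complex $Y$, i.e.\ a cocompact model for $\underline{E}\Gamma$; concretely one can take $Y=\Gamma\times_{\Gamma_0}X$ suitably thickened by a join with a $\Gamma/\Gamma_0$-space so that the outcome is contractible with finite cell stabilizers under the induced $\Gamma$-action.

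With $Y$ in hand the proof closes in the usual way. Any finite subgroup $K\leq\Gamma$ acts cellularly on $Y$, and $Y^K$ is non-empty by the classical fixed-point theorem for finite group actions on finite-dimensional contractible CW complexes. A fixed point $y\in Y^K$ has stabilizer $\Gamma_y$ finite (by properness of $Y$) and containing $K$, so every finite subgroup of $\Gamma$ is $\Gamma$-conjugate into some cell stabilizer of $Y$. Since $Y$ is cocompact, $\Gamma$ has only finitely many orbits of cells, hence only finitely many $\Gamma$-conjugacy classes of cell stabilizers, and therefore only finitely many conjugacy classes of finite subgroups of $\Gamma$.

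The main obstacle is producing the finite $K(\Gamma_0,1)$: one must unfold Kropholler's inductive HNN/amalgam hierarchy for constructible soluble groups into an explicit finite CW-model for $B\Gamma_0$, and this is where the real geometric work lies. The subsequent promotion to a cocompact $\underline{E}\Gamma$ via the finite-index inclusion $\Gamma_0\leq\Gamma$ is then routine. In \cite{ConcBri} the whole argument is carried out directly along the structural hierarchy of a constructible soluble group, bypassing the general Kropholler existence theorem in favor of an explicit construction.
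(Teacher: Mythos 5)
The proposal has a fatal gap at the promotion step. You assert that ``$\Gamma_0$ of type $F$ with $[\Gamma:\Gamma_0]<\infty$ forces $\Gamma$ to be of type $\underline{F}$'' and call this routine L\"uck--Meintrup machinery. This implication is \emph{false}, and its failure is exactly the point of the Leary--Nucinkis examples cited in this very paper (\cite{learynucinkis}): those are groups containing a finite-index subgroup of type $F$ (a Bestvina--Brady kernel) yet having \emph{infinitely} many conjugacy classes of finite subgroups, hence not of type $\underline{F}$ and admitting no cocompact $\underline{E}G$. The L\"uck recognition theorem produces a cocompact $\underline{E}\Gamma$ only from \emph{Bredon} finiteness conditions as input -- the ordinary ones for a finite-index subgroup do not suffice. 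Your concrete suggestion $Y=\Gamma\times_{\Gamma_0}X$ ``suitably thickened'' cannot be repaired: the induced space is a finite disjoint union of copies of $X$, hence not contractible; joining with a finite-dimensional $\Gamma/\Gamma_0$-space cannot make it contractible while preserving proper finite stabilizers; and joining with $E(\Gamma/\Gamma_0)$ destroys cocompactness. In short, you have reduced the theorem to a statement which is both unproved and, in general, false -- producing the cocompact $\underline{E}\Gamma$ for a constructible soluble group \emph{is} the theorem, not a preliminary.

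There is a secondary gap in the closing fixed-point argument: for a general finite group $K$ acting on a finite-dimensional contractible CW complex $Y$, the set $Y^K$ need \emph{not} be non-empty (Oliver's counterexamples); Smith theory only gives this for $p$-groups. If $Y$ had been built as an honest model for $\underline{E}\Gamma$ then $Y^K$ would be contractible by fiat and the issue would vanish, but you only asked for ``proper contractible,'' which is strictly weaker. The final step -- cocompactness gives finitely many orbits of cells, hence finitely many conjugacy classes of cell stabilizers, and every finite subgroup lies in one -- is correct once a genuine cocompact $\underline{E}\Gamma$ is in hand. The argument in \cite{ConcBri} supplies precisely what you assume for free: it unwinds Kropholler's constructible hierarchy explicitly (via amalgams/HNN steps over the Hirsch length) to control finite subgroups at each stage, rather than invoking a nonexistent general promotion principle.
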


 \begin{theorem} \cite[Thm.~3.13]{ConcBri} \label{soluble} Let $G$ be a virtually
soluble group of type $FP_{\infty}$ and $F$ be a finite group acting on $G$. Then $C_G(F)$ is of type $FP_{\infty}$ and is finitely presented.
 \end{theorem}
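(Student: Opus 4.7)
The plan is to reduce the problem to a centralizer inside the larger group $\widehat{G} := G \rtimes F$, which is again virtually soluble of type $FP_\infty$, and then to exploit a geometric model for $\underline{\text{E}}\widehat{G}$ on which $F$ acts with a well-behaved fixed point set.

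The first step is an elementary reduction. Since $C_G(F) = G \cap C_{\widehat{G}}(F)$ and $N_{\widehat{G}}(F)/C_{\widehat{G}}(F)$ embeds in $\mathrm{Aut}(F)$ and is therefore finite, it suffices to show that $N_{\widehat{G}}(F)$ is of type $F_\infty$ and finitely presented. Both properties pass to finite-index subgroups and to quotients by finite normal subgroups, so this will then propagate back to $C_G(F) = G \cap C_{\widehat{G}}(F)$.

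The second step is the geometric input. By the techniques of \cite{KMN1}, a virtually soluble group of type $FP_\infty$ admits a finite-dimensional cocompact model $X$ for its proper classifying space $\underline{\text{E}}\widehat{G}$. For the argument we need in addition that for every finite subgroup $K \leq \widehat{G}$ the fixed set $X^K$ is contractible and carries a cocompact $N_{\widehat{G}}(K)$-action; in the constructions of \cite{KMN1} this follows because $X$ may be chosen with a CAT(0) metric (fixed sets are then convex and non-empty), or in the general case by an inductive construction along the constructible chain that tracks fixed sets. Applied to $K = F$, this yields a contractible $N_{\widehat{G}}(F)$-CW-complex $X^F$ on which $N_{\widehat{G}}(F)$ acts properly with finite stabilizers, and since $(X^F)^L = X^{\langle F,L\rangle}$ is contractible for every finite $L \leq N_{\widehat{G}}(F)$, the space $X^F$ is in fact a model for $\underline{\text{E}}(N_{\widehat{G}}(F)/F)$.

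The third step is cocompactness and the conclusion. Because $\widehat{G}$ has only finitely many conjugacy classes of finite subgroups (Theorem \ref{conjugacy}), only finitely many $\widehat{G}$-orbits of $F$-fixed cells survive in the compact quotient $X/\widehat{G}$, from which one deduces that $X^F/N_{\widehat{G}}(F)$ is compact. Therefore $N_{\widehat{G}}(F)/F$ acts properly and cocompactly on a contractible CW-complex, so it is of type $F_\infty$ and in particular finitely presented and $FP_\infty$; combined with Step 1 this delivers the theorem. The main obstacle and deepest ingredient is the existence of a proper cocompact model $X$ for $\underline{\text{E}}\widehat{G}$ in which fixed sets of arbitrary finite subgroups are genuinely contractible with cocompact normalizer action; this is classical for polycyclic-by-finite groups but in the general virtually soluble $FP_\infty$ setting rests on Kropholler's constructibility theorem and the inductive geometric constructions of \cite{KMN1}.
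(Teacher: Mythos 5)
This statement is quoted from \cite[Thm.~3.13]{ConcBri}; the paper gives no proof of its own, only the seven-step algebraic outline that follows the theorem, which reduces to the nilpotent-by-abelian case via Kropholler's classification and then works entirely with $\Sigma$-invariants of the module $A=N/[N,N]$ over the abelian quotient $Q$, using the open-hemisphere criterion of Bieri--Strebel. Your proposal takes a genuinely different, geometric route, but it has a circularity problem at its core. The existence of a cocompact model $X$ for $\underline{\text{E}}\widehat G$ with contractible fixed-point sets $X^K$ carrying cocompact $N_{\widehat G}(K)$-actions is precisely the geometric form of the main theorem of \cite{ConcBri}, and Theorem 3.13 --- the very statement you are proving --- is one of the essential inputs to that theorem: the L\"uck-type criterion (Lemma \ref{ordinary}) requires one to know in advance that the Weyl groups $W_GK=N_G(K)/K$, equivalently $C_G(K)$ up to finite index, are of type $FP_\infty$ before any cocompact model for $\underline{\text{E}}G$ can be assembled. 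You may therefore not presuppose the geometric model you invoke as the ``deepest ingredient.''

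Even setting circularity aside, the supporting claims are not available in the generality needed: a CAT(0) cocompact model for $\underline{\text{E}}G$ is known for polycyclic-by-finite groups but has not been established for arbitrary virtually soluble groups of type $FP_\infty$, and the citation \cite{KMN1} concerns Thompson's groups rather than soluble groups, so it does not supply the inductive fixed-set construction you gesture at. Your Step 1 reduction from $C_G(F)$ to $N_{\widehat G}(F)$ inside $\widehat G=G\rtimes F$ is correct and harmless, but the geometric core of the argument does not go through. The algebraic route via the containment of $\operatorname{conv}\Sigma_A^c(C_0)$ in an open hemisphere, as sketched in the paper's outline and carried out in Lemmas 8.6--8.8 of this section, is the one that actually closes the argument.
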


 We outline the main steps in the proof of Theorem \ref{soluble}.
 Recall for a $\Z Q$-module $A$ the invariants $\Sigma_A(Q) = \{ [\chi] \in S(Q) | A $ is finitely generated as $\Z Q_{\chi}$-module $\}$ and $\Sigma_A(Q)^c = S(Q) \setminus \Sigma_A(Q)$. For a subset $M$ of $S(Q)$ we denote by $dis M$ the discrete points of $M$.

 1. By the classification of soluble groups of type $FP_{\infty}$ started in \cite{GS}, and finished in \cite{Krop1},
such groups are  virtually torsion-free, constructible. Hence they are finitely presented and nilpotent-by-abelian-by-finite. So it suffices to assume that $G$ is nilpotent-by-abelian.

 2. For nilpotent-by-abelian groups $G$ it is known that $G$ is of type $FP_{\infty}$ if and only  $\Sigma^1(G, \mathbb{Z})^c$ lies in an open hemisphere in $S(G)$ \cite{BS}.

 3. By going down to a subgroup of finite index if necessary we can assume that  $G$ is nilpotent-by-abelian of type $FP_{\infty}$, with normal nilpotent subgroup $N$ and abelian quotient $Q = G /N$ such that $N$ and $Q$ are $F$-invariant. By going down to a subgroup of finite index again we can further assume that $Q$ is torsion-free and $Q = C_0 \times T_0$, where $F$ acts trivially on $C_0$ and $e = \sum_{t \in F} t$ acts as zero on $T_0$.

 4. Let $A$ be the abelianization of $N$, so the action of $F$ on $N$ induces an action of $Q$ on $A$. Under the assumptions of step 3 since $dis \Sigma_A^c(Q)$ is contained in some open hemisphere of $S(Q)$ then $A$ is finitely generated as $\mathbb{Z} C_0$-module (via the conjugation action of $C_0$) and $dis \Sigma_A^c(C_0)$ is contained in some open hemisphere of $S(C_0)$.

 5. Let $C$ be a subgroup of $G$ containing $N$ such that $C_{G/N}(F) = C / N$. Then $C$ is of type $FP_{\infty}$.

 6. The group $N C_G(F)$ has finite index in $C$. In particular $N C_G(F)$ is of type $FP_{\infty}$.

 7.  Let $S$ be a subgroup of $G$ such that $S N = G$. Then $S$ is of type $FP_{\infty}$.
 Applying this for $N C_G(F)$ in the place of $G$ we deduce that $C_G(F)$ is of type $FP_{\infty}$.

	\begin{lemma} \label{char1} Let $F$ be a finite group acting on a nilpotent-by-abelian group $G$ of type $FP_{\infty}$ as described in Step 3 above i.e. $N$ is nilpotent, $Q = G / N$  is torsion-free abelian, $N$ and hence $Q$ are $F$-invariant  and $Q = C_0 \times T_0$, where $F$ acts trivially on $C_0$ and $e = \sum_{t \in F} t$ acts as zero on $T_0$. Let $\Gamma$ be a finite index extension of $G$ that contains $F$ and $\chi : \Gamma \to \mathbb{R}$ be a non-trivial homomorphism such that $\chi(N) = 0$. Let $\widetilde{\chi} : Q \to \mathbb{R}$ be the homomorphism induced by $\chi$. Suppose that
	$$
	[\widetilde{\chi}] \notin conv \Sigma^c_A(Q),
	$$
where $A $ is the abelianization of $N$.
	Then
	$$
	[\widetilde{\chi} |_{C_0}] \notin conv \Sigma^c_A(C_0).
	$$
	\end{lemma}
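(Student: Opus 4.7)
First I would exploit the hypothesis $F\leq\Gamma$. Because $F$ is finite and $\chi(\Gamma)\subseteq\mathbb{R}$, $\chi(F)=0$; together with $\chi(fgf^{-1})=\chi(g)$ for $f\in F$, $g\in G$, this shows $\chi|_G$ is $F$-invariant, so $\widetilde{\chi}$ is $F$-invariant on $Q$. Viewing $T_0$ additively as a $\mathbb{Z}F$-module, the hypothesis $e\cdot T_0=0$ combined with $F$-invariance yields, for any $t\in T_0$, $|F|\,\widetilde{\chi}(t) = \sum_{f\in F}\widetilde{\chi}(f(t)) = \widetilde{\chi}(e\cdot t)=0$; thus $\widetilde{\chi}|_{T_0}=0$ and $\widetilde{\chi}$ is determined by its restriction $\widetilde{\chi}|_{C_0}$ extended trivially on $T_0$.

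I would then prove the contrapositive: assume $[\widetilde{\chi}|_{C_0}]\in conv\,\Sigma_A^c(C_0)$ and deduce $[\widetilde{\chi}]\in conv\,\Sigma_A^c(Q)$. Write $\widetilde{\chi}|_{C_0}=\sum_{i}\lambda_i\mu_i$ with $\lambda_i>0$ and $[\mu_i]\in\Sigma_A^c(C_0)$. Extend each $\mu_i$ trivially to a character $\hat{\mu}_i$ of $Q$ (setting $\hat{\mu}_i|_{T_0}=0$); using $\widetilde{\chi}|_{T_0}=0$ from the first paragraph, $\widetilde{\chi}=\sum_i\lambda_i\hat{\mu}_i$ in $\Hom(Q,\mathbb{R})$. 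So it suffices to show each $[\hat{\mu}_i]\in conv\,\Sigma_A^c(Q)$.

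The heart of the argument is a lifting-and-averaging step. Note first that $\Sigma_A^c(Q)\subseteq S(Q)$ is stable under the induced $F$-action, since the $F$-equivariance of $A$ as a $\mathbb{Z}Q$-module makes finite generation over $\mathbb{Z}Q_\chi$ an $F$-stable property of $[\chi]$. For each $i$, I would exhibit some $[\nu_i]\in\Sigma_A^c(Q)$ with $\nu_i|_{C_0}=\mu_i$---the natural candidate is $\nu_i=\hat{\mu}_i$ itself, whose membership in $\Sigma_A^c(Q)$ one verifies by combining the fact that $A$ is finitely generated as a $\mathbb{Z}C_0$-module (from Step~4 in the outline of Theorem~\ref{soluble}) with the $F$-equivariance of the $T_0$-action on $A$, which prevents $T_0$-translates of any finite generating set from compensating for scalars in $\mathbb{Z}(C_0)_{\mu_i}$. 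Once $\nu_i$ is secured, the averaged character $\bar{\nu}_i:=\frac{1}{|F|}\sum_{f\in F}\nu_i^f$ is $F$-invariant, hence vanishes on $T_0$ (by the first paragraph), and restricts to $\mu_i$ on $C_0$ (as $F$ acts trivially on $C_0$); thus $\bar{\nu}_i=\hat{\mu}_i$, and since each $\nu_i^f\in\Sigma_A^c(Q)$ by $F$-invariance, $\bar{\nu}_i=\hat{\mu}_i$ lies in $conv\,\Sigma_A^c(Q)$, finishing the proof.

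The main obstacle is the lifting sub-step, verifying $[\hat{\mu}_i]\in\Sigma_A^c(Q)$: simple examples (e.g., $A=\mathbb{Z}[x^{\pm 1}]$ with $y$ acting as multiplication by $x$) show that the trivial extension can fall into $\Sigma_A(Q)$ when $A$ carries no compatible $F$-action, so the $F$-equivariance of the module structure together with the relation $e\cdot T_0=0$ are essential to rule this out.
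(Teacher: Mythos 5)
Your overall strategy matches the paper's: show $\widetilde\chi|_{T_0}=0$, pass to the contrapositive, lift each $[\mu_i]\in\Sigma_A^c(C_0)$ to a character of $Q$ lying in $\Sigma_A^c(Q)$, and average over the $F$-action. The first step is fine (your $F$-invariance argument is a clean alternative to the paper's abelianization count), and the averaging mechanism is exactly the paper's $\sum_{t\in F} v_i^t$ computation.

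The genuine gap is the lifting sub-step, and it is not a small one. You propose taking $\nu_i=\hat\mu_i$, the trivial extension with $\hat\mu_i|_{T_0}=0$, and assert its membership in $\Sigma_A^c(Q)$. But note that $Q_{\hat\mu_i}\supseteq (C_0)_{\mu_i}\times T_0$: since $\hat\mu_i$ vanishes on $T_0$, the monoid $Q_{\hat\mu_i}$ contains \emph{all} of $T_0$, so $\mathbb{Z}Q_{\hat\mu_i}$ has strictly more units than $\mathbb{Z}(C_0)_{\mu_i}$. This only makes $A$ \emph{easier} to generate finitely, so knowing $[\mu_i]\in\Sigma_A^c(C_0)$ does not by itself give $[\hat\mu_i]\in\Sigma_A^c(Q)$; your own example at the end illustrates precisely this failure. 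Your claim that ``$F$-equivariance of the $T_0$-action on $A$'' rules it out is a heuristic, not an argument; you would need to actually prove that the extra $T_0$-translates cannot compensate, and it is unclear how to do this directly. The paper avoids the issue entirely by invoking a different tool: Bieri--Groves \cite[Thm.~8.1]{BG}, which ties $\Sigma_A^c$ to valuations and gives, for each $[\chi_i]\in\Sigma_A^c(C_0)$, \emph{some} lift $[v_i]\in\Sigma_A^c(Q)$ with $v_i|_{C_0}=\chi_i$ — not necessarily the trivial extension. Once that existence statement is in hand, averaging over $F$ kills the $T_0$-component (since $F$ fixes $C_0$ pointwise and $e$ annihilates $T_0$) and produces the desired convex decomposition of $[\widetilde\chi]$. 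Without an analogue of this lifting theorem, your argument does not close.
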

	
	\begin{proof}
	By construction for $V = Q \otimes_{\mathbb{Z}} \mathbb{Q}$ and the idempotent  $\tilde{e} =  ({1 \over | F |}) \sum_{t \in F} t$
we have
$$
V = V (1 - \tilde{e}) \oplus  V \tilde{e},
$$
where  $T_0 \otimes_{\Z} \mathbb{Q} =   V (1 - \tilde{e}) $ and $C_0 \otimes_{\Z} \mathbb{Q} =  V\tilde{e}
$. Then $(T_0 \otimes_{\Z} \mathbb{Q}) \otimes_{\mathbb{Q} F} \mathbb{Q} = 0$, so the image of $T_0$ in the abelianization of $\Gamma$ is finite, in particular
\begin{equation} \label{zerovalue}
\tilde\chi(T_0) = 0.
\end{equation}
The rest of the proof is similar to  the proof of \cite[Lemma~3.6]{ConcBri}. We outline the main steps. First by the last paragraph of the proof of \cite[Prop.~3.9]{ConcBri} (there $C_0$ was denoted by $C$) $A$ is finitely generated as $\mathbb{Z} C_0$-module.
Suppose that
$$
\widetilde{\chi} |_{C_0} = \chi_1 + \ldots + \chi_m,
$$
where $[\chi_i] \in \Sigma^c_A(C_0)$. By the link between $\Sigma^c$ and valuations \cite[Thm.~8.1]{BG} there is $[v_i] \in \Sigma^c_A(Q)$ such that the restriction of $v_i$ to $C_0$ is $\chi_i$ i.e. $v_i = (\chi_i, w_i)$, where $w_i$ is the restriction of $v_i$ on $T_0$. Then
$$
\sum_{t \in F} v_i^t = \sum_{t \in F} (\chi_i, w_i^t) = (|F| \chi_i,  \sum_{t \in F} w_i^t) = (|F| \chi_i, 0).
$$
Hence
$$
|F| \widetilde{\chi} = |F| (\widetilde{\chi} |_{C_0}, \widetilde{\chi} |_{T_0} =  0) = \sum_{t \in F, 1 \leq i \leq m} v_i^t
\hbox{ , thus }
[\widetilde{\chi}] \in conv \Sigma_A^c(Q),$$
a contradiction.

\end{proof}

\begin{lemma} Let $F$ be a finite group acting on a group $G$. Assume that $G$ has a normal $F$-invariant nilpotent subgroup $N$, $Q = G / N$  is torsion-free abelian and $Q = C_0 \times T_0$, where $F$ acts trivially on $C_0$ and $e = \sum_{t \in F} t$ acts as zero on $T_0$.  Let $\Gamma$ be a finite extension of $G$ that contains $F$ and $\chi : \Gamma \to \mathbb{R}$ be a non-trivial homomorphism such that $$\chi(N) = 0 \hbox{ and }[\chi] \in \Sigma^{\infty}(\Gamma, \mathbb{Z}).$$
Let  $C$ be the subgroup of $G$ containing $N$ and such that $C / N = C_{G/N} (F)$. Then
$$
[\chi |_C] \in \Sigma^{\infty}(C, \mathbb{Z}).
$$
\end{lemma}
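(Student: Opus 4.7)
The natural strategy is to transport the hypothesis $[\chi]\in\Sigma^\infty(\Gamma,\Z)$ down to a statement about the $\Z Q$-module $A:=N/[N,N]$, apply Lemma \ref{char1} to restrict this statement to $C_0$, and then lift it back up to a $\Sigma^\infty$-condition on $C$. Observe from the outset that $C$ is itself nilpotent-by-abelian: it contains the same nilpotent normal subgroup $N$ and has torsion-free abelian quotient $C/N=C_{Q}(F)=C_0$.

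The first step is to descend from $\Gamma$ to $G$. Since $[\chi]\in\Sigma^\infty(\Gamma,\Z)$, the group $\Gamma$ is of type $FP_\infty$, hence so is its finite index subgroup $G$, and by Theorem \ref{finiteindex} we obtain $[\chi|_G]\in\Sigma^\infty(G,\Z)$. Applying Theorem \ref{Meinert1} to the nilpotent-by-abelian group $G$ yields
\[
\Sigma^\infty(G,\Z)^c=\operatorname{conv}\Sigma^1(G,\Z)^c.
\]
The Bieri--Strebel description of $\Sigma^1$ for nilpotent-by-abelian groups (as used in the proof outline of Theorem \ref{soluble}, cf.\ \cite{ConcBri}) identifies, for characters trivial on $N$, the invariant $\Sigma^1(G,\Z)^c$ with the pullback of $\Sigma^c_A(Q)$ along $G\to Q$. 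Combining this with the assumption $\chi(N)=0$, the condition $[\chi|_G]\in\Sigma^\infty(G,\Z)$ translates to $[\tilde\chi]\notin\operatorname{conv}\Sigma^c_A(Q)$, which is precisely the hypothesis of Lemma \ref{char1}.

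At this point Lemma \ref{char1} applies directly and gives $[\tilde\chi|_{C_0}]\notin\operatorname{conv}\Sigma^c_A(C_0)$, where $A$ is now viewed as a $\Z C_0$-module by restriction of the $\Z Q$-action. By Step 5 in the outline of Theorem \ref{soluble}, $C$ is of type $FP_\infty$; since $C$ is nilpotent-by-abelian, Theorem \ref{Meinert1} applies to $C$ as well, and a second appeal to the Bieri--Strebel description of $\Sigma^1$, now for $C$ with abelian quotient $C_0$, converts the conclusion of Lemma \ref{char1} back into $[\chi|_C]\in\Sigma^\infty(C,\Z)$, as required. (In the degenerate case $\chi|_C=0$ the conclusion is vacuous.)

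The main obstacle in carrying out this plan is making precise the two-sided dictionary between $\Sigma^\infty$ of a nilpotent-by-abelian group and $\operatorname{conv}\Sigma^c_A$ of the abelianized quotient. One must check that the $\Z C_0$-module structure obtained by restricting the $\Z Q$-action on $A$ is compatible with the Bieri--Strebel--Meinert characterisation of $\Sigma^1$ on both sides, so that the equivalence $[\chi|_C]\in\Sigma^\infty(C,\Z)\Leftrightarrow[\tilde\chi|_{C_0}]\notin\operatorname{conv}\Sigma^c_A(C_0)$ is valid. Once this compatibility is in place, Lemma \ref{char1} supplies the only nontrivial descent step and the proof is complete.
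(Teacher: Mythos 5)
Your proposal is correct and follows essentially the same route as the paper: descend from $\Gamma$ to $G$ via Theorem \ref{finiteindex}, translate $[\chi|_G]\in\Sigma^\infty(G,\Z)$ into $[\tilde\chi]\notin\operatorname{conv}\Sigma^c_A(Q)$ via Theorem \ref{Meinert1} and the Bieri--Strebel--Meinert description of $\Sigma^1$ for nilpotent-by-abelian groups, apply Lemma \ref{char1}, and run the same dictionary in reverse for $C$ with quotient $C_0$. The only cosmetic difference is that you explicitly flag the need for $C$ to be of type $FP_\infty$ (citing Step 5) in order to invoke Theorem \ref{Meinert1} for $C$, a point the paper leaves implicit.
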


\begin{proof}
Observe that $\Sigma^{\infty}(\Gamma, \mathbb{Z}) \not= \emptyset$ implies that $\Gamma$ and hence $G$ are of type $FP_{\infty}$.
By Theorem \ref{Meinert1} and Theorem \ref{finiteindex}
$$
[\chi |_{G}] \in \Sigma^{\infty}(G, \mathbb{Z}) = S(G) \setminus (conv \Sigma^1(G, \mathbb{Z})^c).
$$
Since $N$ is nilpotent by \cite[Thm.~2.3]{Meinert}
\begin{equation} \label{==}
\Sigma^1(G, \mathbb{Z})^c = \Sigma_{A}^c(Q),
\end{equation}
where $A $ is the abelianization of $N$ i.e. for a real homomorphism $\mu : G \to \mathbb{R}$  we have $[\mu] \in \Sigma^1(G,\Z)^c$ if and only if $\mu(N) = 0$ and for the homomorphism $\widetilde{\mu} : Q \to \mathbb{R}$ induced by $\mu$ we have $[\widetilde{\mu}] \in \Sigma_A^c(Q) = S(Q) \setminus \Sigma_A(Q)$. Hence for the character $\widetilde{\chi} : Q \to \mathbb{R}$ induced by $\chi$ we have
$$
[\widetilde{\chi}] \notin conv \Sigma_{A}^c(Q).
$$
Then by Lemma \ref{char1}
\begin{equation} \label{novo99}
	[\widetilde{\chi} |_{C_0}] \notin conv \Sigma^c_A(C_0).
	\end{equation}
	Since $C_0 = C / N$, as in (\ref{==}) we have $\Sigma_A^c(C_0) = \Sigma^1(C, \Z)^c$ and by  Theorem \ref{Meinert1} and (\ref{novo99})  we obtain
	$$
	[\chi |_C] \notin conv \Sigma^1(C, \mathbb{Z})^c = \Sigma^{\infty}(C, \mathbb{Z})^c, \hbox{ so }  [\chi |_C] \in \Sigma^{\infty}(C, \mathbb{Z}).
	$$
	\end{proof}

\begin{lemma} Let $G$ be a group with a normal nilpotent subgroup $N$ and $Q = G/ N$ abelian. Let $\chi : G \to \mathbb{R}$ be a non-trivial character such that  $[\chi] \in \Sigma^{\infty}(G, \mathbb{Z})$ and $\chi(N) = 0$. Let $S$ be a subgroup of $G$ such that $S N = G$. Then
$$
[\chi |_S] \in \Sigma^{\infty}(S, \mathbb{Z}).
$$
\end{lemma}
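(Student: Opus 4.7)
My plan is to reduce via Meinert's convex-hull description of $\Sigma^\infty$ for nilpotent-by-abelian groups of type $FP_\infty$ (Theorem \ref{Meinert1}) to a comparison of $\Z Q$-module invariants, in the same spirit as the preceding lemma.

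First I would observe that $S$ inherits the nilpotent-by-abelian structure from $G$: the intersection $S \cap N$ is a normal nilpotent subgroup of $S$, and the second isomorphism theorem applied to $SN = G$ yields $S/(S \cap N) \cong G/N = Q$, which is abelian. Since $[\chi] \in \Sigma^{\infty}(G, \Z)$ forces $G$ to be of type $FP_{\infty}$, step 7 of the outline of the proof of Theorem \ref{soluble} gives that $S$ is of type $FP_{\infty}$ as well. Applying Meinert's Theorem \ref{Meinert1} together with the $\Sigma^{1}$-identification \cite[Thm.~2.3]{Meinert} (as used in equation (\ref{==})) to both $G$ and $S$, and writing $A = N^{\mathrm{ab}}$ and $A_S = (S \cap N)^{\mathrm{ab}}$ regarded as $\Z Q$-modules through the isomorphism $Q = G/N \cong S/(S \cap N)$, gives
\begin{equation*}
\Sigma^{\infty}(G,\Z)^c = \mathrm{conv}\,\Sigma_A^c(Q) \qquad \text{and} \qquad \Sigma^{\infty}(S,\Z)^c = \mathrm{conv}\,\Sigma_{A_S}^c(Q).
\end{equation*}
Because $\chi$ vanishes on $N$ and hence on $S \cap N$, both $\chi$ and $\chi|_S$ descend to the same character $\widetilde\chi \in S(Q)$, and the task becomes the implication $[\widetilde\chi] \notin \mathrm{conv}\,\Sigma_A^c(Q) \Rightarrow [\widetilde\chi] \notin \mathrm{conv}\,\Sigma_{A_S}^c(Q)$, for which it suffices to establish the set-theoretic inclusion $\Sigma_{A_S}^c(Q) \subseteq \mathrm{conv}\,\Sigma_A^c(Q)$.

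To establish this inclusion, I would exploit the canonical $\Z Q$-linear map $\psi \colon A_S \to A$ induced by $S \cap N \hookrightarrow N$: let $B := (S \cap N)[N,N]/[N,N] \subseteq A$ be its image and $K := \ker\psi = ((S \cap N) \cap [N,N])/[S \cap N, S \cap N]$ be its kernel. Since $Q$ is finitely generated abelian, the ring $\Z Q_{\mu}$ is Noetherian for every character $\mu$, so for any short exact sequence $0 \to M' \to M \to M'' \to 0$ of $\Z Q$-modules one has the splitting $\Sigma_M^c(Q) = \Sigma_{M'}^c(Q) \cup \Sigma_{M''}^c(Q)$. Applied to $0 \to B \to A \to A/B \to 0$ this gives $\Sigma_B^c(Q) \subseteq \Sigma_A^c(Q)$, and applied to $0 \to K \to A_S \to B \to 0$ it gives $\Sigma_{A_S}^c(Q) = \Sigma_K^c(Q) \cup \Sigma_B^c(Q)$; the problem therefore further reduces to showing $\Sigma_K^c(Q) \subseteq \mathrm{conv}\,\Sigma_A^c(Q)$.

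The main obstacle will be this control of the kernel $K$, which is a subquotient of $A_S$ that has no natural embedding into $A$. My plan is to induct on the nilpotency class $c$ of $N$: the base case $c = 1$ is immediate, since then $N$ is abelian, $\psi$ is injective and $K = 0$. For the inductive step I would filter by the lower central series $N \ge [N,N] \ge \cdots \ge \gamma_c(N) = 1$, apply the inductive hypothesis to the pair $(G/\gamma_c(N),\, S\gamma_c(N)/\gamma_c(N))$ whose normal nilpotent subgroup $N/\gamma_c(N)$ has class $c-1$, and separately handle the contribution of the central graded piece $\gamma_c(N)$ by means of the Stallings five-term exact sequence applied to $1 \to \gamma_c(N) \to N \to N/\gamma_c(N) \to 1$ and its analogue for $S \cap N$, exploiting centrality of $\gamma_c(N)$ to obtain a clean $\Z Q$-module comparison.
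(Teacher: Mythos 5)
Your reduction to showing $\Sigma_{A_S}^c(Q) \subseteq \mathrm{conv}\,\Sigma_A^c(Q)$, where $A_S = (S\cap N)^{\mathrm{ab}}$ and $A = N^{\mathrm{ab}}$ are compared as $\Z Q$-modules, is exactly the reduction the paper performs (the paper writes $B$ for your $A_S$). Where you diverge is in how you try to establish this containment, and there your proof has a genuine gap.

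The paper gets the containment by invoking the proof of \cite[Lemma~3.12]{ConcBri} and exploiting the fact that, since both $G$ and $S$ are of type $FP_\infty$, the sets $\Sigma_{A}^c(Q)$ and $\Sigma_{A_S}^c(Q)$ consist entirely of discrete points; the cited argument shows directly that $\mathrm{dis}\,\Sigma_{A_S}^c(Q) \subseteq \mathrm{conv}\,\mathrm{dis}\,\Sigma_A^c(Q)$. You instead factor the canonical $\Z Q$-map $\psi\colon A_S \to A$, set $B = \mathrm{im}\,\psi$ and $K = \ker\psi$, and try to handle the two pieces separately. The image piece is essentially fine as a conclusion, but your justification is incorrect: $\Z Q_\mu$ is \emph{not} Noetherian for a general (in particular, non-discrete) character $\mu$, even when $Q$ is finitely generated abelian. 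The additivity $\Sigma_M^c(Q) = \Sigma_{M'}^c(Q) \cup \Sigma_{M''}^c(Q)$ for a short exact sequence of finitely generated $\Z Q$-modules is a genuine theorem of Bieri--Strebel with a proof that does not go through Noetherianity of $\Z Q_\mu$; you should cite it rather than derive it.

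The real gap is the kernel. You correctly identify $\Sigma_K^c(Q) \subseteq \mathrm{conv}\,\Sigma_A^c(Q)$ as "the main obstacle," but the induction on nilpotency class that you sketch does not resolve it. In the inductive step you pass to $G/\gamma_c(N)$, but $\gamma_c(N)$ and $S \cap \gamma_c(N)$ need not relate cleanly to $[N,N]$ and $[S\cap N, S\cap N]$, so the modules $A$, $A_S$ and the kernel of $\psi$ do not behave transparently under this quotient. The appeal to the Stallings five-term sequence drags in $H_2$-terms for which you have given no control, and crucially, there is no a priori reason why the $\Sigma^c$-invariant of $K$ — a subquotient of $N$ with no natural map to or from $A$ — should land inside $\mathrm{conv}\,\Sigma_A^c(Q)$. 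The paper's approach sidesteps $K$ entirely by comparing $A_S$ to $A$ in one step using discreteness, and that is precisely the ingredient your decomposition strategy lacks. As written, this remains a plan rather than a proof: the step $\Sigma_K^c(Q) \subseteq \mathrm{conv}\,\Sigma_A^c(Q)$ is asserted but not established.
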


\begin{proof} Since $\Sigma^{\infty}(G, \mathbb{Z}) \not= \emptyset$, $G$ is of type $FP_{\infty}$. By \cite[Lemma~3.12]{ConcBri} $S$ is of type $FP_{\infty}$ and
by Theorem \ref{Meinert1}
$$
\Sigma^{\infty}(S, \mathbb{Z})^c = conv \Sigma^1(S, \mathbb{Z})^c.
$$
Recall that as in  (\ref{==})
$$
\Sigma^1(S, \mathbb{Z})^c = \Sigma_B^c(Q),
$$
where $B = S \cap N / [S \cap N, S \cap N]$ and $Q = G / N \simeq S / S \cap N$.
Hence to prove the lemma we have to show for the character $\widetilde{\chi} : Q \to \mathbb{R}$ induced by $\chi |_{S}$ that
\begin{equation} \label{sigmaB} [\widetilde{\chi}] \notin conv \Sigma_B^c(Q).
\end{equation}
Note that $S$ and $G$ are of type $FP_{\infty}$, so $\Sigma_B^c(Q)$ and $\Sigma_A^c(Q)$ contain only discrete points, where $A = N / [N,N]$. As in the proof of \cite[Lemma~3.12]{ConcBri}
$$\Sigma_B^c(Q) = dis \Sigma_B^c(Q) \subseteq conv \ \ dis \Sigma_A^c(Q) = conv \Sigma_A^c(Q),$$
hence
\begin{equation} \label{inclusion}
conv \Sigma_B^c(Q) \subseteq conv \Sigma_A^c(Q).\end{equation}
Using again Theorem \ref{Meinert1}
$$
[\chi] \notin \Sigma^{\infty}(G, \mathbb{Z})^c = conv \Sigma^1(G, \mathbb{Z})^c
$$
and as in (\ref{==})
$$
\Sigma^1(G, \mathbb{Z})^c = \Sigma_A^c(Q).
$$
Since $\chi(N) = 0$, for $\widetilde{\chi}$ the character induced by $\chi$,  we have
$$
[\widetilde{\chi}] \notin conv \Sigma_A^c(Q).
$$
Then by (\ref{inclusion}) we deduce that (\ref{sigmaB}) holds.
\end{proof}

  We finish the section by  proving the following
  $\Sigma$-version of Theorem \ref{soluble}.

  \begin{theorem} \label{Bredonsoluble} Let $\Gamma$ be a virtually soluble group of type $FP_{\infty}$. Then
  $$\underline{\Sigma}^{\infty} (\Gamma, \underline{\mathbb{Z}}) = {\Sigma}^{\infty} (\Gamma, \mathbb{Z}).$$
  \end{theorem}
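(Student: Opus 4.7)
The inclusion $\underline{\Sigma}^\infty(\Gamma, \underline{\Z}) \subseteq \Sigma^\infty(\Gamma, \Z)$ is immediate from Remark \ref{subset}. For the reverse inclusion I take $[\chi] \in \Sigma^\infty(\Gamma, \Z)$ and plan to verify the two conditions of Theorem \ref{condition2} (using the substitute condition 2b from Remark \ref{centr01}). Theorem \ref{conjugacy} supplies finitely many conjugacy class representatives $K_1, \dots, K_s$ of the finite subgroups of $\Gamma$. By the classification of virtually soluble groups of type $FP_\infty$, I pass to a finite index normal subgroup $G \lhd \Gamma$ that is torsion-free nilpotent-by-abelian; writing $N := G'$ (nilpotent) and $Q := G/N = G^{\mathrm{ab}}$ and shrinking $G$ further ensures that $Q$ is torsion-free and admits, for each $i$, the decomposition $Q = C_0 \times T_0$ of Step 3 of the proof sketch of Theorem \ref{soluble}, with $K_i$ acting trivially on $C_0$ and $\sum_{k \in K_i} k$ annihilating $T_0$. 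Crucially, $\chi|_G$ vanishes on $N = G'$.

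To verify condition 2b for $K = K_i$, I would chain the two soluble lemmas above. Theorem \ref{finiteindex} gives $[\chi|_G] \in \Sigma^\infty(G, \Z)$; the second-to-last lemma of this section (applied with $F = K$) then yields $[\chi|_C] \in \Sigma^\infty(C, \Z)$ where $C$ is the preimage in $G$ of $C_Q(K)$. Since $NC_G(K)$ has finite index in $C$ (Step 6 of the Theorem \ref{soluble} sketch), Theorem \ref{finiteindex} promotes this to $[\chi|_{NC_G(K)}] \in \Sigma^\infty(NC_G(K), \Z)$, and then the last lemma of the section, applied with $S = C_G(K)$ inside $NC_G(K)$ and $SN = NC_G(K)$, delivers $[\chi|_{C_G(K)}] \in \Sigma^\infty(C_G(K), \Z)$. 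As $C_G(K)$ has finite index in $C_\Gamma(K)$, a final application of Theorem \ref{finiteindex} gives $[\chi|_{C_\Gamma(K_i)}] \in \Sigma^\infty(C_\Gamma(K_i), \Z)$ and in particular $\chi(C_\Gamma(K_i)) \neq 0$, verifying condition 2b.

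For condition 1, set $\widetilde{\Gamma} := \chi^{-1}\bigl(\bigcap_{i=1}^s \chi(N_\Gamma(K_i))\bigr)$. Being the preimage under $\chi$ of a subgroup of the abelian group $\chi(\Gamma)$, $\widetilde{\Gamma}$ is automatically normal in $\Gamma$, contains $\ker(\chi) \supseteq \Gamma'$, and contains every finite subgroup of $\Gamma$ (torsion lies in $\ker(\chi)$). Its finite index in $\Gamma$ amounts to property~(\ref{property}): that each $\chi(N_\Gamma(K_i))$ has finite index in $\chi(\Gamma)$. I would establish this by pushing $\chi$ down to $Q$: the induced character $\bar\chi$ factors through $Q/[K_i, Q]$ and, via the splitting $Q \otimes \Q = C_0 \otimes \Q \oplus T_0 \otimes \Q$, the natural map $Q^{K_i} \to Q/[K_i, Q]$ has finite cokernel. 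Combined with the fact that $C_G(K_i)$ has finite-index image in $Q^{K_i}$, which follows from the same cohomological input used to show $C_G(F)$ is of type $FP_\infty$ in Theorem \ref{soluble}, this forces $\chi(C_G(K_i))$, hence $\chi(N_\Gamma(K_i))$, to have the same $\Q$-rank as $\chi(\Gamma)$. The remainder of condition 1 is then routine: any finite $K \leq \widetilde{\Gamma}$ is a $\Gamma$-conjugate of some $K_j$, so $\chi(N_\Gamma(K)) = \chi(N_\Gamma(K_j))$ by conjugation-invariance of $\chi$; normality of $\widetilde{\Gamma}$ gives $\widetilde{\Gamma} \subseteq N_\Gamma(K)\ker(\chi)$; and writing $g \in \widetilde{\Gamma}$ as $g = nk$ with $n \in N_\Gamma(K)$ and $k \in \ker(\chi) \subseteq \widetilde{\Gamma}$ forces $n \in \widetilde{\Gamma} \cap N_\Gamma(K) = N_{\widetilde{\Gamma}}(K)$.

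The main obstacle is property~(\ref{property}) itself, i.e.\ the finite-index estimate $[\chi(\Gamma):\chi(N_\Gamma(K_i))] < \infty$. Morally it merely says that centralizers of finite subgroups exhaust the abelianization modulo torsion, but executing it rigorously requires threading the $K_i$-fixed subspace of $Q$ through several torsion cokernels — the cokernel of $Q^{K_i} \to Q/[K_i,Q]$, the various finite-index reductions to obtain $G$, $N$, and the splitting $Q = C_0 \times T_0$, and the finite $H^{\geq 1}(K_i,-)$ obstruction controlling how much of $Q^{K_i}$ is hit by $C_G(K_i)$. Once this is in hand, everything else is bookkeeping with Theorem \ref{condition2} and the lemmas already developed.
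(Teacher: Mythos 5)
Your proof is correct and follows essentially the same route as the paper's: verify the two conditions of Theorem~\ref{condition2} (with 2b), establishing 2b by chaining the two soluble-group lemmas with Theorem~\ref{finiteindex} through $C \to NC_G(K) \to C_G(K) \to C_\Gamma(K)$. The one genuine divergence is in condition~1: you take $\widetilde{\Gamma} = \chi^{-1}\bigl(\bigcap_i \chi(N_\Gamma(K_i))\bigr)$ and check $N_{\widetilde{\Gamma}}(K)\ker(\chi) = \widetilde{\Gamma}$ by direct character bookkeeping, whereas the paper sets $\widetilde{G} = \bigcap_F C_\Gamma(F)[\Gamma,\Gamma]$ and invokes Lemmas~\ref{conjfinite} and~\ref{conjfinite1}. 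Both constructions are valid, both are normal subgroups of finite index containing $\Gamma'$ and every finite subgroup, and both hinge on exactly the same finite-index estimate --- $[\Gamma : C_\Gamma(K_i)\Gamma'] < \infty$, equivalently $[\chi(\Gamma) : \chi(N_\Gamma(K_i))] < \infty$ --- whose derivation from (\ref{zerovalue}) and Step~6 of the Theorem~\ref{soluble} sketch matches the paper's. Incidentally, your version spells out the intermediate finite-index passage from $C$ to $NC_G(K)$ before applying the $SN=G$ lemma, a step the paper's sentence ``the previous two lemmas imply \dots'' compresses; your explicitness here is a small improvement in readability.
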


	\begin{proof} Note that  by Theorem \ref{conjugacy} $\Gamma$ has finitely many conjugacy classes of finite subgroups. Observe that by
	Remark \ref{subset} we have
	$
	\underline{\Sigma}^{m} (\Gamma, \underline{\mathbb{Z}}) \subseteq {\Sigma}^{m} (\Gamma, \mathbb{Z}).
	$

For the converse let $\chi : \Gamma \to \mathbb{R}$  be a non-zero homomorphism such that
$[\chi] \in {\Sigma}^{\infty} (\Gamma, \mathbb{Z})$
and $K$ be a finite subgroup of $\Gamma$. Let $G$ be a normal nilpotent-by-abelian subgroup of $\Gamma$. Then $K$ acts on $G$ via conjugation and $C_G(K)$ has finite index in $C_{\Gamma}(K)$.
By substituting $G$ with  a subgroup of finite index if necessary we can assume that the assumptions of Step 3 hold and $\chi(N) = 0$.
Then the previous two  lemmas imply that $
[\chi |_S] \in \Sigma^{\infty}(S, \mathbb{Z})$ for $S = C_G(K)$. Since $S$ has finite index in $D = C_{\Gamma}(K)$ by Theorem \ref{finiteindex} $[\chi |_D] \in \Sigma^{\infty}(D, \mathbb{Z})$.

By the line above (\ref{zerovalue}) and the fact that $C_G(F) N$ has finite index in the preimage of $C_0$ in $G$ we deduce that $C_{\Gamma}(F) [\Gamma, \Gamma]$ has finite index in $\Gamma$, hence we can define $$\widetilde G:=\cap\{C_{\Gamma}(F) [\Gamma, \Gamma] \mid F\text{ rep. of the conjugacy classes
of finite subgroups in } \Gamma\}.$$
Then by Lemma \ref{conjfinite} and Lemma \ref{conjfinite1} the first condition of Theorem \ref{condition2} holds.

Finally the proof is completed by Theorem \ref{condition2}, where condition 2 is substituted with condition 2b).
\end{proof}

\end{document}